\numberwithin{theorem}{section}
\newcommand{\TheTitle}{\parbox{0.92\textwidth}{Certifying unstability of Switched Systems using Sum of Squares Programming}}
\newcommand{\TheAuthors}{B. Legat, R. M. Jungers, and P. A. Parrilo}
\headers{\TheTitle}{\TheAuthors}
\title{{\TheTitle}\thanks{Submitted to the editors September 28, 2017.
A preliminary version of this work appeared in Proceedings of Hybrid Systems: Computation and Control, 2016 \cite{legat2016generating}.
%\funding{This work was funded by the Fog Research Institute under contract no.~FRI-454.}
}}
\author{
  Beno\^{i}t Legat%
  \thanks{ICTEAM, Universit\'e catholique de Louvain, 4 Av. G. Lema\^itre, 1348 Louvain-la-Neuve, Belgium
    (\email{benoit.legat@uclouvain.be}).}
  \and
  Pablo A. Parrilo%
  \thanks{Laboratory for Information and Decision Systems, Massachusetts Institute of Technology, 77 Massachusetts Avenue, Cambridge MA 02139, USA
    (\email{parrilo@mit.edu}).}
  \and
  Rapha\"{e}l M. Jungers%
  \thanks{ICTEAM, Universit\'e catholique de Louvain, 4 Av. G. Lema\^itre, 1348 Louvain-la-Neuve, Belgium
    (\email{raphael.jungers@uclouvain.be}).}
}
\newcommand{\propref}[1]{\cref{prop:#1}}
\newcommand{\theoref}[1]{\cref{theo:#1}}
\newcommand{\lemref}[1]{\cref{lem:#1}}
\newcommand{\cororef}[1]{\cref{coro:#1}}
\newcommand{\defref}[1]{\cref{def:#1}}
\newcommand{\exemref}[1]{\cref{exem:#1}}
\newcommand{\progref}[1]{\cref{prog:#1}}
\begin{document}

\maketitle

\begin{abstract}
  The joint spectral radius (JSR) of a set of matrices characterizes the
maximal asymptotic growth rate of an infinite product of matrices of
the set.  This quantity appears in a number of applications including
the stability of s\witch{} and hybrid systems. A popular method used for
the stability analysis of these systems searches for a Lyapunov function with convex optimization tools.
We investigate dual formulations for this %convex optimization
approach
and leverage these dual programs for developing new analysis tools for the JSR.

%In this paper,
We show that the dual of this convex problem searches
for the \emph{occupations measures} of trajectories with high asymptotic growth
rate.
We both show how to generate a sequence of guaranteed high asymptotic growth rate and how to detect cases where we can provide lower bounds to the JSR.
%The moment relaxation of this problem is in fact the dual of the
%well known sum of squares optimization program for finding Lyapunov
%functions. Therefore, solving this program pair with a given candidate value
%for the JSR either returns Lyapunov functions certifying that the JSR
%is lower than this candidate value or returns the moments of relaxed
%occupation \emph{pseudo-measures}.
%We give a rounding procedure to extract high
%growth rate trajectories from these moments and provide a guarantee for
%this growth rate.
We deduce from it a new guarantee for the upper bound provided by the sum of squares lyapunov program.
%We can also detect that the
%JSR is larger than the candidate value by finding atomic measures that have
%the required moments.
We end this paper with a method to reduce the computation of the JSR of
low rank matrices to the computation of the constrained JSR of matrices
of small dimension.

All results of this paper are presented for the general case of constrained switched systems,
that is, systems for which the switching signal is constrained by an automaton.
%For this reason, all results of this paper are generalized for constrained
%switched systems.

\end{abstract}

% REQUIRED
\begin{keywords}
  Joint spectral radius, Sum of squares programming,
  S\witch{} Systems, Path-complete Lyapunov functions
\end{keywords}

% REQUIRED
%	93D05  	Lyapunov and other classical stabilities (Lagrange, Poisson, $L^p, l^p$, etc.)
% 93D09  	Robust stability
% 93D20  	Asymptotic stability
% 93D30  	Scalar and vector Lyapunov functions
\begin{AMS}
  93D05, 93D20, 93D30
\end{AMS}

\section{Introduction}
In recent years, the study of the stability of hybrid systems
has been the subject of extensive research using methods based on classical ideas from Lyapunov theory
and modern mathematical optimization techniques.
Even for s\witch{} linear systems, arguably the simplest class of hybrid systems,
determining stability is undecidable and approximating the maximal asymptotic growth rate that a trajectory can have is NP-hard \cite{blondel2000boundedness}.
Despite these negative results, the vast range of applications has motivated a wealth of algorithms to approximate
this maximal asymptotic growth rate.

%\Sect\ref{sec:dualalgo} is given in these more general settings.

A s\witch{} linear system is characterized by a finite set of matrices $\A \eqdef \{A_1, A_2,\\\ldots, A_m\} \subset \mathbb{R}^{n \times n}$ and the iteration
\begin{equation}
  \label{eq:switchsys}
  x_k = A_{\sigma_k} x_{k-1}, \quad \sigma_k \in [m].
\end{equation}
The maximal asymptotic growth rate of this iteration is given by the \emph{joint spectral radius} (JSR).
The JSR $\jsr$ of a finite set of matrices $\A$
is defined as
\[ \jsr = \lim_{k \to \infty} \max_{\sigma \in [m]^k} \|A_{\sigma_k} \cdots A_{\sigma_2} A_{\sigma_1}\|^{1/k}. \]
This definition is independent of the norm used.

The JSR was introduced by Rota and Strang~\cite{rota1960note}
and has many other applications such as wavelets, the capacity of some particular codes, zero-order stability of ordinary differential equations, congestion control in computer networks, curve design and networked and delayed control systems; see \cite{jungers2009joint} for a survey on the JSR and its applications.
Many algorithms exist for estimating the JSR but not much is known on how to generate an infinite
sequence of matrices with an asymptotic growth rate close to the JSR.
However generating such sequence can be of particular interest, depending on the application,
such as exhibiting unstable trajectories for s\witch{} linear systems.
%To the best of our knowledge, the
The
currently known algorithms generate a sequence of matrices with high spectral radius
using brute force (or branch-and-bound variants) and repeat this sequence infinitely \cite{gripenberg1996computing,guglielmi2008algorithm,jungers2014lifted}.

%Arbitrarily tight approximation of
Approximating the JSR usually consists in certifying upper bounds $\gamub$ to the JSR by exhibiting a Lyapunov function or invariant set for the matrices $A_i/\gamub$.
The search for such Lyapunov functions can naturally be written as a convex optimization program; see \progref{primalinf}.
Certifying lower bounds $\gamlb$ is currently either achieved using the guarantees we have on the accuracy of the upper bound to the JSR
or by exhibiting trajectories of asymptotic growth rate $\gamlb$.
In this paper, we introduce a new way to certify lower bounds by exhibiting nonnegative measures satisfying some invariance condition parametrized by the matrices $A_i/\gamlb$; see \eqref{eq:dualinf1}.
% Too precise, concept not introduced, let's say at the level of ideas
%This generalizes the notion of trajectories since given an trajectory with asymptotic growth rate $\gamlb$,
%its \emph{occupation measure} will satisfy this condition for the matrices $A_i/\gamlb$.
This invariance condition is linear on the measure hence the search of measures on the convex cone of nonnegative measures is a \emph{convex} program; see \progref{dualinf}.
It turns out that this program is the dual of \progref{primalinf}.

We revisit the sum-of-squares program proposed by Parrilo and Jadbabaie~\cite{parrilo2008approximation} and show that its dual formulation is the moment relaxation of the search of the measures satisfying the invariance condition.
%Parrilo and Jadbabaie~\cite{parrilo2008approximation} shows how to use sum of squares (SOS) programming to compute Lyapunov functions satisfying \progref{primalinf}, the SOS program is given in \progref{primal}.
%In this paper
%In fact, the moment relaxation of \progref{dualinf}, given in \progref{dual}, is the dual of \progref{primal}.

Thanks to this duality, solving this pair of programs with a given candidate value $\gamma$
for the JSR either returns Lyapunov functions certifying that
$\jsr \leq \gamma$ or returns moments that are solution of the moment relaxation.
These moments are not necessarily the moments of measures satisfying the invariance conditions.
However, we give a rounding procedure to extract a (infinite) switching sequence from these moments
and provide a guarantee on the asymptotic growth rate of this sequence.
%This guarantee also implies a new one for the upper bound provided by the sum of squares lyapunov program.
As a by-product of the rounding procedures, the spectral radius of a finite part of
this infinite sequence can be used to give lower bounds on the JSR.
In addition, we give a way to sometimes detect when the moments belongs to measures that satisfy the invariance conditions.
This happens when the measures are the convex combination of the occupation measures of several periodic trajectories.
Since the trajectories are periodic, the measures are atomic and we can recover them from moments of sufficiently high degree.
We show on numerical examples that these techniques work well in practice.
%This is explained in \secref{lb} with numerical examples suggesting that it works well in practice.

In some applications the values that $\sigma_k$ can take in \eqref{eq:switchsys} may depend on $\sigma_{k-1}, \sigma_{k-2},\\\ldots$.
These constraints are often conveniently represented using a \emph{finite automaton} and the JSR under such constraints
is called \emph{constrained joint spectral radius} (CJSR) \cite{dai2012gel}; an example of constrained s\witch{} system is given by \exemref{run1} and its automaton is illustrated by \figref{run}.

The following will serve as a running example.

\begin{myexem}[Running example\footnote{The source code and instructions are available at the author's web site
to reproduce the numerical results obtained for the running example.
The s\witch{} system considered in Example~\ref{exem:simple1}, Example~\ref{exem:simple2} and Example~\ref{exem:simple3}
is simpler than the running example and the results given in these examples can be obtained by hand.
}]
  \label{exem:run1}
  We borrow the example of \cite[\Sect4]{philippe2016stability}.
  %It is based on a state-feedback control that might undergo dropouts in its state feedback.
  The set of matrices $\A$ is composed of the following four matrices
  \begin{align*}
    A_1 & = A+B
    \begin{pmatrix}
      k_1 & k_2
    \end{pmatrix},&
    A_2 & = A+B
    \begin{pmatrix}
      0 & k_2
    \end{pmatrix},\\
    A_3 & = A+B
    \begin{pmatrix}
      k_1 & 0
    \end{pmatrix},&
    A_4 & = A.
  \end{align*}
% \begin{align*}
%   \A
%   & = \{A_1,A_2,A_3,A_4\}\\
%   & = \{
%     A+B
%     \begin{pmatrix}
%       k_1 & k_2
%     \end{pmatrix},
%     A+B
%     \begin{pmatrix}
%       0 & k_2
%     \end{pmatrix},
%     A+B
%     \begin{pmatrix}
%       k_1 & 0
%     \end{pmatrix},
%     A
%   \}.
%  \end{align*}
  where $k_1 = -0.49$, $k_2 = 0.27$,
  \[
    A =
    \begin{pmatrix}
      0.94 & 0.56\\
      0.14 & 0.46
    \end{pmatrix} \text{ and }
    B =
    \begin{pmatrix}
      0\\
      1
    \end{pmatrix}.
  \]

  The automaton is represented by Figure~\ref{fig:run}.

  \begin{figure}[!ht]
    \centering
    \begin{tikzpicture}[x=40,y=18]
       \Vertex{1}
       \NO[unit=4](1){2}
       \SOEA[unit=2](2){3}
       \EA[unit=2](3){4}
       \tikzset{EdgeStyle/.style = {->}}
       \tikzset{LoopStyle/.style = {->}}
       \tikzset{EdgeStyle/.append style = {bend right=18}}
       \Edge[label=1](1)(3)
       \Edge[label=3](1)(2)
       \Edge[label=2](2)(1)
       \Edge[label=1](2)(3)
       \Edge[label=2](3)(1)
       \Edge[label=3](3)(2)
       \Edge[label=4](3)(4)
       %\Loop[dist=1cm,dir=EA](3)
       \draw[thick,->] (3) to [out=115,in=65,looseness=10] node [midway, fill=white] {1} (3);
       \Edge[label=1](4)(3)
     \end{tikzpicture}
     \caption{Automaton for the running example.
     The numbers on the \arcs{} are their respective labels.
     %We can see that $(2,3)$ is $\G$-admissible
     %while $(3,4)$ is not.
     }
     \label{fig:run}
  \end{figure}
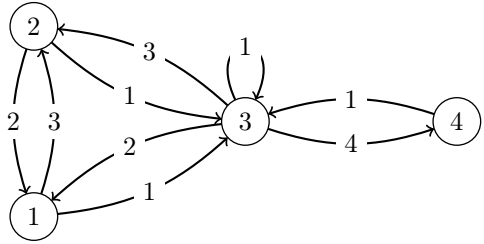
\end{myexem}

The automaton representing the constraints can be represented by a
strongly connected labelled directed graph $\G(\Nodes,\Arcs)$,
possibly with parallel \arcs{}.  The labels are elements of the set
$[m]$ and $\Arcs$ is a subset of $\Nodes \times \Nodes
\times [m]$.  We say that $(u,v,\sigma) \in \Arcs$ if
there is an \arc{} between node $u$ and node $v$ with label $\sigma$.

\begin{equation}
  \label{eq:cswitchsys}
  x_k = A_{\sigma_k} x_{k-1}, \quad (\sigma_1, \ldots, \sigma_k) \in \G_k.
\end{equation}

The \emph{arbitrary switching} case~\eqref{eq:switchsys}
can be seen as the particular case when the automaton has only one node and $m$ self-loops with labels $1, \ldots, m$.
On the other side, any constrained switched system can be replaced by an arbitrary switching system with the same CJSR; see \lemref{koz}.
Our techniques are well suited for analysing the more general constrained systems as well.

We also provide
a new estimate of the accuracy of the SOS-based approximation algorithm for the CJSR which is better
than the previously existing one for sufficiently large SOS degree.
The existing estimate only depends on the dimension of the matrices
while our new one relates the accuracy of the SOS-based approximation algorithm with the combinatorial structure of the automaton representing the constraints.
%Two guarantees of accuracy exist for the SOS-based approximation algorithm but only one of them has been generalized in the constrained case yet.
%The generalized guarantee only depends on the dimension of the matrices.
%In \Sect\ref{sec:quality}, we give a generalization for the second guarantee relating the accuracy of the SOS-based approximation algorithm with the combinatorial structure of the automaton representing the constraints.

In \cite{ahmadi2012joint}, Ahmadi and Parrilo show how to reduce the computation of the JSR
of low rank matrices to a combinatorial problem, the CJSR of $1 \times 1$ matrices (i.e. scalars).
As a final contribution, we generalize
this approach and give a reduction of the computation of the JSR (or CJSR)
of rank $r$ matrices to the computation of the CJSR of $r \times r$ matrices.

The paper is organized as follows.
In \Sect\ref{sec:inf}, we give the program searching for Lyapunov functions,
the program searching for measures satisfying the invariance condition,
prove the duality between the two programs
and show that they respectively provides upper and lower bounds to the CJSR.

In \Sect\ref{sec:primal}, we give the SOS program searching for Lyapunov functions and
we give our new estimate for its accuracy.
The new bounds explicitly depend on the allowable transitions, through
the graph $G(V,E)$.

In \secref{dualalgo}, we give the moment relaxation of the program searching for moment measures satisfying the invariance condition.
We both show how to generate the sequence of high asymptotic growth rate and detect cases where
we can provide lower bounds to the CJSR as mentioned in this introduction.

In \Sect\ref{sec:lowrank}, we give the low rank reduction mentioned above.

\paragraph{Notations}
We define the automaton $\G^\Tr (\Nodes,\Arcs^\Tr )$ where
\(\Arcs^\Tr  = \{\, (v,u,\sigma) : (u,v,\sigma)\\\in \Arcs \,\}.\)

We denote as $\Arcs_k$ the subset of $\Arcs^k$ that represents valid paths of length $k$.
The \ktup{k} $(\sigma_1, \sigma_2, \ldots, \sigma_k)$ is said to be $\G$-\emph{admissible} if $\sigma_1, \ldots, \sigma_k$ are the respective labels of a path of length $k$.
The \emph{arbitrary switching} case,
that is, when every \tup{} is $\G$-admissible,
can be seen as the particular case when the automaton has only one node and $m$ self-loops with labels $1, \ldots, m$.
% TODO Tu mets trop de petites phrases 'self-standing', qui font que ton texte devient très haché, et pas très smooth. ex: le paragraphe 'the arbitrary switching case,...'
We denote the set $\{1, \ldots, m\}$ as $[m]$ and the set of all \ktups{k} of $[m]^k$ that are $\G$-admissible as $\G_k$.
The sequence $\sigma_1, \sigma_2, \ldots$ is $\G$-admissible (resp. $\G^\Tr $-admissible) if $(\sigma_1, \ldots, \sigma_k)$ (resp. $(\sigma_k, \ldots, \sigma_1)$) is $\G$-admissible for any $k \geq 1$.
We denote $A_{\sigma_k}\cdots A_{\sigma_1}$ as $A_s$ where $s = (\sigma_1, \ldots, \sigma_k)$ or $s$ is a path with these respective labels.

To shorten the notation we denote the $i$th node of a path $s$ as
$s(i)$ and the $i$th edge as $s[i]$.
Also, for a given \ktup{k} $s$, we denote $(s(i),\ldots,s(k))$ by $s(i:)$.
We define
\begin{align*}
  \Arcs_k(u,v) & = \{\, s \in \Arcsin_k \mid s(1) = u, s(k+1) = v \,\}\\
  \Arcsin_k(v) & = \{\, s \in \Arcsin_k \mid s(k+1) = v \,\}\\
  \Arcsout_k(v) & = \{\, s \in \Arcsout_k \mid s(1) = v \,\}\\
  \Arcsin_k[e] & = \{\, s \in \Arcsin_k \mid s[k] = e \,\}\\
  \Arcsout_k[e] & = \{\, s \in \Arcsout_k \mid s[1] = e \,\}.
\end{align*}
We denote the indegree (resp. outdegree) of a node $v \in \Nodes$ as
$\din(v)$ (resp. $\dout(v)$) and the maximum indegree (resp.
outdegree) of $\G$ as $\mdin(\G) = \max_{v \in \Nodes} \din(v)$ (resp.
$\mdout(\G) = \max_{v \in \Nodes} \dout(v)$).
We also denote the number of paths of length $k$ ending (resp.
starting) at a node $v \in \Nodes$ as $\din_k(v) \eqdef |\Arcsin_k(v)|$ (resp. $\dout_k(v) \eqdef |\Arcsout_k(v)|$)
and define $\mdin_k(\G) = \max_{v \in \Nodes} \din_k(v)$ and
$\mdout_k(\G) = \max_{v \in \Nodes} \dout_k(v)$.
Note that $\mdin_1(\G) = \mdin(\G)$, $\mdout_1(\G) = \mdout(\G)$ and
for any $k$, $\mdout_k(\G^\Tr) = \mdin_k(\G)$.
% TODO mdin vs mdin(\G)

\section{Instability certificate using measures}
%\section{Lyapunov dual functions}
\label{sec:inf}
The definition of the JSR is generalized as follows for constrained systems.
\begin{definition}[\cite{dai2012gel}]
  \label{def:cjsr}
  The \emph{constrained joint spectral radius} (CJSR) of
  \defAG{}, denoted as $\cjsr$,
  is
  \begin{equation}
    \label{eq:jsrtheo}
    \limsup_{k \to \infty} \cjsrrk = \cjsr = \lim_{k \to \infty} \cjsrk
  \end{equation}
  where
  \begin{equation}
    \label{eq:cjsrrk}
    \cjsrrk
    = \max \big\{\, \jsrc : c \in \G_k, c \text{ is a cycle}\,\big\}, \quad \jsrc = [\rho(A_c)]^{1/k},
  \end{equation}
  and
  \begin{equation}
    \label{eq:cjsrk}
    %\cjsrk = \max \Big\{ \|A_{\sigma_k} \cdots A_{\sigma_2} A_{\sigma_1}\|^{1/k}\\: (\sigma_1, \sigma_2, \ldots, \sigma_k) \text{ is }\G\text{-admissible} \Big\}.
    \cjsrk =
    \max \big\{\, \|A_s\|^{1/k}\\: s \in \G_k \,\big\}.
  \end{equation}
\end{definition}
%When talking about the JSR, it is implicit that it is unconstrained.
%We sometimes make it explicit by saying ``unconstrained JSR'' to prevent confusion.

We can readily see that
\begin{equation*}
  \cjsrrk \leq \cjsrk
\end{equation*}
for any $k$ and norm $\norm{\cdot}$.
Equality \eqref{eq:jsrtheo} is called the \emph{Joint Spectral Radius Theorem} and was proved in 1992 by Berger and Wang~\cite{berger1992bounded} in the unconstrained case.
Elsner~\cite{elsner1995generalized} provided a somewhat simpler self contained proof in 1995.
Both proofs use rather involved results on the joint spectral radius.

A popular method for proving stability of a dynamical system is to find a Lyapunov function.
In this section, we introduce measures playing a role dual to Lyapunov function for switched system.
These measures provide a certificate for instability.
Finding Lyapunov functions and finding these measures are in fact two dual programs, %\emph{strongly dual} programs,
they are respectively provided by \progref{primalinf} and \progref{dualinf}.
%The Joint Spectral Radius Theorem is a consequence of the duality between these two programs.
We will be succinct in our definition of measure-theoretic concepts but the interested reader can find an good introduction to writing programs using measures and functions as decision variables in \cite{lasserre2009moments}.

Consider the dual pair $(\Fb, \Fbd)$ where $\Fb$ is the space of bounded measurable
functions on $\Sn$ and $\Fbd$ is the space of \emph{finite}\footnote{The measure $\mu$ is \emph{finite} if $\mu(\Sn)$ is finite.} \emph{signed}\footnote{A \emph{signed} measure is a difference between two measures, i.e. $\mu - \nu$ where $\mu$ and $\nu$ are measures is a signed measure.} Borel measures on $\Sn$. % TODO define finite
Given a function $f(x) \in \Fb$, we can define the homogeneous\footnote{A function $f$ is homogeneous if $f(\alpha x) = \alpha f(x)$ for any scalar value $\alpha$.}
function $h(f) \eqdef x \mapsto \|x\|_2f(x/\|x\|_2)$ on $\R^n$.
We define $\F = \{\, h(f) \mid f \in \Fb \,\}$ with the scalar product $\la h(f), \mu \ra = \la f, \mu \ra$ for $f \in \Fb, \mu \in \Fbd$.

Given an application $A$ and a measure $\mu \in \Fbd$,
the \emph{pushforward measure} $\pushf{A}{\mu}$ is often defined to be the measure given by
$(\pushf{A}{\mu})(B) = \mu(A^{-1}(B))$ for $B \in \Sn$.
However, since $\Sn$ may not be invariant under application of the matrices of $\A$,
we will use an alternative definition.
Given an application $A$ and a measure $\mu$,
the pushforward measure $\pushf{A}{\mu}$ is defined to be the measure such that
$\la f, \pushf{A}{\mu} \ra = \la f \circ A, \mu \ra$ for any $f \in \F$.
Moreover, given $B \subseteq \Sn$, we define $\mu(B) = \la x \mapsto \|x\|_2 \Ind{B}(x), \mu \ra$
so that $(\pushf{A}{\mu})(B)$ is well defined.
Using these definition, one can verify that for any application $A$, measure $\mu \in \Fbd$ and set $B \subseteq \Sn$,
\begin{equation}
  \label{eq:pushfineq}
  \pushf{A}{\mu}(B) \leq \mu(B) \max_{x \in B} \|Ax\|_2.
\end{equation}
%where $\supp(\mu)$ is the support of $\mu$.

% FIXME Do I need to say that it \Fpp = \inte\Fp ?
Let $\Fp$ (resp. $\Fbp$) be the set of nonnegative functions of $\F$ (resp. $\Fb$),
$\Fbpd$ be the set of (nonnegative) measures of $\Fbd$
and $\Fpp$ be the set of positive functions of $\F$.
%The set $\Fp$ is a closed convex cone and its dual is the set of linear functionals that give a nonnegative value when applied to nonnegative functions.
%That is, the set of measures on $\Sn$ which is also a convex cone.
Given two functions $f, g \in \F$, $f \geq 0$ denotes $f \in \Fp$ and $f \geq g$ denotes $f - g \in \Fp$.
Similarly, given two measures $\mu, \nu \in \Fbd$, $\mu \geq 0$ denotes $\mu \in \Fbd$ and $\mu \geq \nu$ denotes $\mu - \nu \in \Fbpd$.

\begin{myprog}[Primal]
  \label{prog:primalinf}
  \begin{align}
    %\label{eq:sosprog}
    \notag
    \inf_{f_v \in \F, \gamub \in \R} \gamub\\
    \label{eq:primalinf1}
    f_v(A_\sigma x) & \leq \gamub f_u(x), \quad \forall (u, v, \sigma) \in \Arcs,\\
    \notag
    f_v(x) & \in \Fpp, \quad \forall v \in \Nodes,\\
    \label{eq:primalinf3}
    \sum_{v \in V} \int_{\Sn} f_v(x) \dif x & = 1.
  \end{align}
\end{myprog}

%The dual of this program is:
\begin{myprog}[Dual of \progref{primalinf}]
  \label{prog:dualinf}
  \begin{align}
%   \notag
%   \sup_{\Exp_{uv\sigma} \in \Fbd, \gamlb \in \R} \gamlb\\
%   \label{eq:dualinf1}
%   \sum_{(u,v,\sigma) \in \Arcs} \mapp_\sigma^* \Exp_{uv\sigma} & \geq \gamlb \sum_{(v,w,\sigma) \in \Arcs} \Exp_{vw\sigma}, \quad \forall v \in \Nodes,\\
%   \sum_{(u,v,\sigma) \in \Arcs} \Exp_{uv\sigma}[f(A_\sigma x)] & \geq \gamlb \sum_{(v,w,\sigma) \in \Arcs} \Exp_{vw\sigma}[f(x)], \quad \forall f \in \Fp, v \in \Nodes,\\
%   \Exp_{uv\sigma} & \in \Fp^*, \quad \forall (u, v, \sigma) \in \Arcs,\\
%   \label{eq:dualinf3}
%   \sum_{(u,v,\sigma) \in \Arcs} \Exp_{uv\sigma}[\|x\|] & = 1
    \notag
    \sup_{\mu_{uv\sigma} \in \Fbd, \gamlb \in \R} \gamlb\\
    \label{eq:dualinf1}
    \sum_{(u,v,\sigma) \in \Arcs} \pushf{A_{\sigma}}{\mu_{uv\sigma}} & \geq \gamlb \sum_{(v,w,\sigma) \in \Arcs} \mu_{vw\sigma}, \quad \forall v \in \Nodes,\\
    \notag
    \mu_{uv\sigma} & \in \Fbpd, \quad \forall (u, v, \sigma) \in \Arcs,\\
    \label{eq:dualinf3}
    \sum_{(u,v,\sigma) \in \Arcs} \mu_{uv\sigma}(\Sn) & = 1.
  \end{align}
\end{myprog}

The constraint \eqref{eq:primalinf1} is the Lyapunov constraint.
The constraint \eqref{eq:dualinf1} is similar to the \emph{measure invariance constraint} $\pushf{A}{\mu} = \mu$ of a linear dynamical system $x_{k+1} = Ax_k$
and to the \emph{mass balance constraint} of a \emph{circulation problem}~\cite{ahuja1993network}.
Without constraint \eqref{eq:primalinf3} (resp. \eqref{eq:dualinf3}), the feasible set of \progref{primalinf} (resp. \progref{dualinf}) is a cone.
These constraints have no effect on the optimal objective value but they make the feasible set bounded.

The main result of this section is summarized in the following theorem.

\begin{theorem}
  \label{theo:exact}
  Consider \defAG{}.
    Let $\gamubopt$ (resp. $\gamlbopt$) be the optimal value of \progref{primalinf} (resp. \progref{dualinf}).
  The following identity holds:
  \[ \gamlbopt = \cjsr = \gamubopt. \]
\end{theorem}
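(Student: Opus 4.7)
The plan is to prove the three-way equality by a cycle of inequalities $\gamubopt \leq \cjsr$, $\cjsr \leq \gamlbopt$, and $\gamlbopt \leq \gamubopt$. The last is weak duality, while the first two produce saturating primal and dual solutions.

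For weak duality, I would pair any primal-feasible $(f_v,\gamub)$ with any dual-feasible $(\mu_{uv\sigma},\gamlb)$ and compute $S \eqdef \sum_{(u,v,\sigma) \in \Arcs} \la f_v, \pushf{A_\sigma}{\mu_{uv\sigma}} \ra$ in two ways. The pushforward identity $\la f_v, \pushf{A_\sigma}{\mu_{uv\sigma}}\ra = \la f_v \circ A_\sigma, \mu_{uv\sigma}\ra$ combined with \eqref{eq:primalinf1} gives $S \leq \gamub \sum_{(u,v,\sigma)} \la f_u, \mu_{uv\sigma}\ra$; grouping $S$ by $v$ and pairing \eqref{eq:dualinf1} against the positive $f_v$ gives $S \geq \gamlb \sum_{(v,w,\tau)} \la f_v, \mu_{vw\tau}\ra$. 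The two outer sums coincide after relabeling edges and are strictly positive by the positivity of $f_u$ and the normalization \eqref{eq:dualinf3}, so $\gamlb \leq \gamub$ and hence $\gamlbopt \leq \gamubopt$.

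For $\gamubopt \leq \cjsr$, I would exhibit, for each $\gamma > \cjsr$, an explicit Lyapunov tuple
\[ f_v(x) \eqdef \sup_{k \geq 0} \sup_{s \in \Arcsout_k(v)} \frac{\|A_s x\|_2}{\gamma^k}. \]
The $k = 0$ term shows $f_v(x) \geq \|x\|_2$, so $f_v \in \Fpp$, and $\cjsr < \gamma$ forces $\|A_s\|_2/\gamma^k$ to decay geometrically uniformly in $s$, so the supremum is finite. The Lyapunov inequality $f_v(A_\sigma x) \leq \gamma f_u(x)$ for $(u,v,\sigma) \in \Arcs$ holds because prepending $\sigma$ to any $s \in \Arcsout_k(v)$ produces an element of $\Arcsout_{k+1}(u)$ whose contribution to $f_u(x)$ equals $\gamma^{-1}$ times the contribution of $s$ to $f_v(A_\sigma x)$. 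After rescaling to enforce \eqref{eq:primalinf3}, this gives a primal-feasible point, hence $\gamubopt \leq \gamma$ for every $\gamma > \cjsr$.

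The main work lies in $\cjsr \leq \gamlbopt$. Invoking the Berger--Wang theorem for the CJSR \cite{dai2012gel}, for each $\gamma < \cjsr$ there is a cycle $c = (e_1,\ldots,e_\ell)$ in $\G$ with $\rho(A_c)^{1/\ell} > \gamma$. When $A_c$ has a real positive eigenvalue $\lambda$ of maximal modulus with eigenvector $x_0$, I would set $x_i = A_{\sigma_i} x_{i-1}$ and $\mu_{e_i} = w_i \delta_{x_{i-1}/\|x_{i-1}\|_2}$ (with $\mu_e = 0$ off the cycle). The pushforward then collapses to $\pushf{A_{\sigma_i}}{\mu_{e_i}} = w_i (\|x_i\|_2/\|x_{i-1}\|_2)\,\delta_{x_i/\|x_i\|_2}$, and \eqref{eq:dualinf1} reduces to the scalar recursion $w_{i+1} \leq w_i\|x_i\|_2/(\gamma\|x_{i-1}\|_2)$. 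Telescoping around the cycle produces the consistency factor $\lambda/\gamma^\ell \geq 1$, which holds precisely because $\gamma < \rho(A_c)^{1/\ell}$, so positive weights $w_i$ exist; normalizing via \eqref{eq:dualinf3} yields dual feasibility at $\gamma$. The chief technical obstacle is the complex-eigenvalue case: the Dirac ansatz must be replaced either by passing to a cycle power $c^N$ whose dominant eigenvalue is nearly real positive, or by continuous measures supported on the rotation orbit inside the two-dimensional real $A_c$-invariant subspace, with a unique-ergodicity/continuity argument needed to preserve \eqref{eq:dualinf1} in the weak-measure sense.
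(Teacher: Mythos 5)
Your proposal is correct and follows essentially the same route as the paper: weak duality obtained by pairing feasible primal and dual points, a primal Lyapunov construction from the products $\|A_s x\|$ (the paper takes the finite-$k$ maximum in \lemref{primalbuild} and lets $k\to\infty$ rather than your infinite supremum), and dual occupation measures supported on the normalized eigenvector trajectory of a near-optimal cycle combined with the Berger--Wang identity \eqref{eq:jsrtheo}. The complex-leading-eigenvalue subtlety you flag is genuine, but the paper's own \lemref{traj2meas} silently assumes a real positive eigenvector, so on that point you are if anything more careful than the source.
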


As a consequence of \theoref{exact}, we have a new criterion for lower bounds on the CJSR using measures.
\begin{corollary}
  \label{coro:instabcert}
  Consider \defAGe{}.
  If there exist non-trivial\footnote{At least one $\mu_{uv\sigma}$ must be nonzero.} measures $\mu_{uv\sigma}$ for each $(u,v,\sigma) \in \Arcs$ such that
  \[ \sum_{(u,v,\sigma) \in \Arcs} \pushf{A_{\sigma}}{\mu_{uv\sigma}} \geq \gamlb \sum_{(v,w,\sigma) \in \Arcs} \mu_{vw\sigma}, \quad \forall v \in \Nodes \]
  then
  \( \gamlb \leq \cjsr. \)
\end{corollary}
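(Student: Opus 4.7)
The plan is to deduce the corollary as an essentially immediate consequence of \theoref{exact}, using the fact that the invariance inequality \eqref{eq:dualinf1} is homogeneous in the measures while the normalization \eqref{eq:dualinf3} fixes their total mass. So the main task is to exhibit, from any non-trivial feasible family $\{\mu_{uv\sigma}\}$, a properly normalized family that is admissible for \progref{dualinf} with the same value of $\gamlb$, and then invoke $\gamlbopt = \cjsr$.

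First I would check that the total mass $M \eqdef \sum_{(u,v,\sigma) \in \Arcs} \mu_{uv\sigma}(\Sn)$ is strictly positive. Each $\mu_{uv\sigma}$ lies in $\Fbpd$ and at least one is non-zero; since a nonzero nonnegative measure has positive total mass, the sum $M$ is positive. Next I would rescale: set $\tilde\mu_{uv\sigma} \eqdef \mu_{uv\sigma}/M$. Linearity of the pushforward in the measure argument ($\pushf{A_\sigma}{(c \mu)} = c\,\pushf{A_\sigma}{\mu}$ for $c \geq 0$, directly from the defining identity $\la f, \pushf{A}{\mu}\ra = \la f \circ A, \mu\ra$) shows that the family $\{\tilde\mu_{uv\sigma}\}$ satisfies the same invariance inequality with the same $\gamlb$, and by construction it satisfies the normalization constraint \eqref{eq:dualinf3}. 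Hence $(\tilde\mu_{uv\sigma}, \gamlb)$ is feasible for \progref{dualinf}.

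Finally, since $\gamlbopt$ is defined as the supremum of $\gamlb$ over feasible points of \progref{dualinf}, feasibility gives $\gamlb \leq \gamlbopt$, and \theoref{exact} yields $\gamlbopt = \cjsr$, so $\gamlb \leq \cjsr$ as claimed.

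There is no real obstacle: the only point requiring a brief check is the homogeneity of the constraints under positive scaling of all the $\mu_{uv\sigma}$'s, together with the positivity of the total mass, both of which are essentially bookkeeping. The substantive content has already been packaged into \theoref{exact}.
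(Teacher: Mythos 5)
Your proof is correct and matches the paper's intent: the corollary is stated there as an immediate consequence of \theoref{exact}, with the normalization \eqref{eq:dualinf3} already noted in the text to be harmless by homogeneity, which is exactly the rescaling step you carry out explicitly.
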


Since $\gamlbopt = \gamubopt$ by \lemref{duality}, one could prove \theoref{exact} by using that $\cjsr = \gamubopt$ which is classical; see \lemref{primalbuild} and \theoref{stabcert}.
However, to illustrate the relation between atomic solutions of \progref{dualinf} and periodic trajectories, we instead prove that $\gamubopt \leq \cjsrk$ and $\cjsrrk \leq \gamlbopt$ for all $k$.
%We will show that $\gamubopt = \lim_{k\to\infty} \cjsrk$ and $\gamlbopt = \lim_{k\to\infty} \cjsrrk$.
These relations somehow suggest that \progref{primalinf} is related to the definition of the CJSR with norms while \progref{dualinf} is related to the definition of the CJSR with the spectral radius.
%This is the relation between the Joint Spectral Radius Theorem and the duality between \progref{primalinf} and \progref{dualinf}.

%We start with the proofs of $\gamubopt \leq \cjsrk$ (resp. $\cjsrrk \leq \gamlbopt$) for any $k$.
%They consist in building a feasible solution for the problem from the maximizer of $\cjsrk$ (resp. minimizer of $\cjsrrk$).
\begin{lemma}
    \label{lem:primalbuild}
    Consider \defAGe{}.
    For any natural number $k$ and norm $\|\cdot\|$, we have
    \[ \gamubopt \leq \cjsrk \]
    where $\cjsrk$ is defined in \eqref{eq:cjsrk}.
    \begin{proof}
        Let $f_v(x) = \max_{s \in \Arcsout_{k-1}(v)} \|A_s x\|$.
        For any \arc{} $(u,v,\sigma) \in \Arcs$,
        \[ f_v(A_\sigma x) = \max_{s \in \Arcsout_{k-1}(v)} \|A_sA_\sigma x\| \leq \max_{s \in \G_k} \|A_s x\| \leq [\cjsrk]^k \|x\|. \]
        so the Lyapunov functions $f_v$ are solution for $\gamub = \cjsrk$.
    \end{proof}
\end{lemma}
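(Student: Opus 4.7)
The plan is to produce explicit feasible Lyapunov candidates $(f_v)_{v \in \Nodes}$ for \progref{primalinf} that achieve the value $\gamub = \cjsrk$. The natural construction, which I would use, is a ``$k$-step lookahead'' norm: set $f_v(x)$ equal to the worst-case value of $\|A_s x\|$ over valid paths $s$ of length $k-1$ that start at node $v$. The reason this is the right candidate is that the Lyapunov inequality \eqref{eq:primalinf1} along an arc $(u,v,\sigma) \in \Arcs$ turns into an inequality about paths that start with that arc: concatenating $(u,v,\sigma)$ with any $s \in \Arcsout_{k-1}(v)$ yields some $s' \in \Arcsout_k(u)$ with $A_s A_\sigma = A_{s'}$, and by the very definition \eqref{eq:cjsrk} of $\cjsrk$ we know $\|A_{s'}\| \leq [\cjsrk]^k$. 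This is exactly the inequality the author writes.

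The subtle point is converting the bound $f_v(A_\sigma x) \leq [\cjsrk]^k \|x\|$ into the required form $f_v(A_\sigma x) \leq \cjsrk\, f_u(x)$, since \emph{a priori} one does not have $f_u(x) \geq [\cjsrk]^{k-1} \|x\|$ from the plain length-$(k-1)$ maximum. The cleanest fix, which I would include explicitly, is to take a weighted maximum over all path lengths up to $k-1$:
\[
  f_v(x) \;=\; \max_{0 \leq j \leq k-1} [\cjsrk]^{-j} \max_{s \in \Arcsout_j(v)} \|A_s x\|.
\]
With this definition, the same concatenation trick gives
\[
  f_v(A_\sigma x) \;\leq\; \cjsrk \max_{1 \leq i \leq k} [\cjsrk]^{-i} \max_{s' \in \Arcsout_i(u)} \|A_{s'} x\|,
\]
where for $1 \leq i \leq k-1$ each term is bounded by $f_u(x)$ directly from the definition, and for $i = k$ the operator-norm bound $\|A_{s'}\| \leq [\cjsrk]^k$ combined with the $j = 0$ term $\|x\|$ in $f_u(x)$ closes the inequality.

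Finally I would verify the remaining constraints of \progref{primalinf}. Strict positivity $f_v \in \Fpp$ is automatic since the $j=0$ entry forces $f_v(x) \geq \|x\|$ on $\Sn$, so each $f_v$ is a (continuous) positive function. Continuity and homogeneity ensure $f_v \in \F$. The normalization \eqref{eq:primalinf3} is harmless: the program is invariant under rescaling all $f_v$ by a positive constant, so I would simply divide by $\sum_v \int_{\Sn} f_v(x)\,\dif x$ at the end. The only step that really uses anything nontrivial is the estimate $\|A_{s'}\| \leq [\cjsrk]^k$ for length-$k$ admissible products, which is just the definition of $\cjsrk$; everything else is combinatorial path manipulation on $\G$.
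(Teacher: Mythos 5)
Your construction is correct, and it is in fact more careful than the paper's own argument, which uses the unweighted candidate $f_v(x) = \max_{s \in \Arcsout_{k-1}(v)} \|A_s x\|$ and stops at the chain $f_v(A_\sigma x) \leq [\cjsrk]^k \|x\|$. That final bound does not by itself yield the required inequality $f_v(A_\sigma x) \leq \cjsrk\, f_u(x)$, because nothing forces $f_u(x) \geq [\cjsrk]^{k-1}\|x\|$ — exactly the subtlety you flagged. A concrete instance where the unweighted candidate is infeasible at $\gamub = \cjsrk$: arbitrary switching with $A_1 = 2e_1e_2^\Tr$, $A_2 = \tfrac{1}{2}e_2e_1^\Tr$, $k=2$, Euclidean norm. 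Here all length-two products have norm $1$ or $0$, so $\cjsrk = 1$, yet $f(x) = \max\{2|x_2|, \tfrac{1}{2}|x_1|\}$ gives $f(A_2 e_1) = 1 > \cjsrk \cdot f(e_1) = \tfrac{1}{2}$. Your weighted maximum over path lengths $0,\dots,k-1$ is the standard repair: the length-$k$ term produced by prepending the arc is absorbed into the $j=0$ term $\|x\|$ via the operator-norm bound $\|A_{s'}\| \leq [\cjsrk]^k$, while the shorter terms shift index by one and are dominated by $f_u$ directly. It also delivers strict positivity ($f_v \geq \|x\|$, hence $f_v \in \Fpp$) for free, whereas the paper's $f_v$ can vanish on nonzero vectors when all length-$(k-1)$ products out of $v$ share a kernel direction. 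Both routes aim at the same conclusion; yours is the one whose inequalities actually close.
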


%The feasible solution for \progref{dualinf} is built by \algoref{traj2meas}.
%It works with any measure $\nu_x$ parametrized by $x \in \Sn$ but the case $\nu_x = \delta_x$
%where $\delta_x$ are dirac measure centered at $x$ may be more intuitive.

\begin{lemma}
    \label{lem:traj2meas}
    Consider \defAG{}
    and a cycle $c = (\sigma_1, \ldots, \sigma_k)$ of length $k$ with intermediary \nodes{} $v_0, \ldots, v_{k-1},v_k=v_0 \in \Nodes$ such that $(v_{i-1}, v_i, \sigma_i) \in \Arcs$ for $i = 1, \ldots, k$.
    Let $x_0$ be such that $A_c x_0 = \lambda x_0$ with $|\lambda| = \rho(A_c)$ and $\enorm{x_0} = 1$,
    consider the following iteration
    \begin{align*}
      x_i & = A_{\sigma_i} x_{i-1} &
      \hat{x}_i & = x_i/\enorm{x_i} &
      \alpha_i & = \enorm{x_i}/\jsrc^i
    \end{align*}
    where $\jsrc$ is defined in \eqref{eq:cjsrrk}.
    The following solution
    \[ \Big(\,\mu_{uv\sigma} = \sum_{i=1,v_i = v}^k \alpha_i \dirac{\hat{x}_i} \,\Big)_{(u,v,\sigma) \in \Arcs} \]
    is feasible for \progref{dualinf} with any $\gamlb \geq \jsrc$
    and it satisfies the constraints \eqref{eq:dualinf1} as equality for $\gamlb = \jsrc$.
    \begin{proof}
        By construction, $\alpha_k = 1$ so $\alpha_k\dirac{\hat{x}_k} = \dirac{x_0}$ and
        for each $i = 0, \ldots, k-1$, we have
        \[
            %A_{\sigma_i*}\mu_{v_{i-1}v_i\sigma_i} =
            \pushf{A_{\sigma_i}}{(\alpha_i \dirac{\hat{x}_i})}
            = \alpha_i \frac{\enorm{x_{i+1}}}{\enorm{x_i}} \dirac{\hat{x}_i}
            = \jsrc \alpha_{i+1} \dirac{\hat{x}_{i+1}}
            \leq \gamlb \alpha_{i+1} \dirac{\hat{x}_{i+1}}
            %= \gamma \mu_{v_iv_{i+1}\sigma_{i+1}}
        \]
        which equality if $\jsrc = \gamlb$.
    \end{proof}
\end{lemma}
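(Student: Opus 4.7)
The plan is to verify the invariance constraint \eqref{eq:dualinf1} at every node $v \in \Nodes$ by evaluating both sides atom by atom along the cycle, and then using the cyclic closure $\hat{x}_k = \hat{x}_0$ to identify the incoming and outgoing contributions. The key ingredient is a pushforward identity for Dirac masses on $\Sn$; once this is in hand, the rest reduces to bookkeeping.

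First, I would derive the pointwise pushforward formula: for any $x \in \Sn$ and any matrix $A$,
\[ \pushf{A}{\dirac{x}} = \enorm{Ax}\,\dirac{Ax/\enorm{Ax}}. \]
This drops out of the defining identity $\la h(f), \pushf{A}{\mu}\ra = \la h(f)\circ A, \mu\ra$ combined with the homogeneity $h(f)(y) = \enorm{y} f(y/\enorm{y})$: at $\mu = \dirac{x}$ both sides evaluate to $\enorm{Ax}\,f(Ax/\enorm{Ax})$. Specializing to the cycle with $x = \hat{x}_i$ and the matrix corresponding to the next arc, and using $A_{\sigma_{i+1}}\hat{x}_i = x_{i+1}/\enorm{x_i}$ together with $\alpha_i = \enorm{x_i}/\jsrc^i$, a short algebraic simplification gives the core relation
\[ \pushf{A_{\sigma_{i+1}}}{(\alpha_i \dirac{\hat{x}_i})} = \jsrc\,\alpha_{i+1}\,\dirac{\hat{x}_{i+1}}. \]

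Second, I would aggregate contributions at each fixed node $v$. The outgoing sum $\sum_{(v,w,\sigma)\in\Arcs}\mu_{vw\sigma}$ collects exactly the atoms $\alpha_i \dirac{\hat{x}_i}$ indexed by those cycle steps that leave $v$; the incoming sum $\sum_{(u,v,\sigma)\in\Arcs}\pushf{A_\sigma}{\mu_{uv\sigma}}$ collects, by the core relation, the very same family of Dirac masses but multiplied by $\jsrc$, indexed by the arriving steps. Because $c$ is a cycle with $A_c x_0 = \lambda x_0$ and $\alpha_k = \enorm{x_k}/\jsrc^k = 1$, the closing atom $\alpha_k \dirac{\hat{x}_k}$ coincides with $\dirac{x_0}$, so the incoming and outgoing families at every node match exactly. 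This gives the pointwise identity
\[ \sum_{(u,v,\sigma)\in\Arcs}\pushf{A_\sigma}{\mu_{uv\sigma}} = \jsrc \sum_{(v,w,\sigma)\in\Arcs}\mu_{vw\sigma} \quad \forall v \in \Nodes, \]
which is precisely \eqref{eq:dualinf1} with equality at $\gamlb = \jsrc$. Feasibility at the other admissible values of $\gamlb$ follows by the monotonicity of the constraint in $\gamlb$, and the normalization \eqref{eq:dualinf3} can be enforced by a single common positive rescaling of all $\mu_{uv\sigma}$, which preserves the linear invariance condition.

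The main obstacle is the bookkeeping around the cycle: tracking precisely which atom $\alpha_i \dirac{\hat{x}_i}$ sits in which measure $\mu_{uv\sigma}$, and aligning the closing step $k$ with the starting step $0$ so that the atoms at each node balance exactly. A minor wrinkle occurs when $\lambda < 0$, since then $\hat{x}_k = -\hat{x}_0$ rather than $\hat{x}_0$; this is easily handled by passing to the doubled cycle $c^2$, whose leading eigenvalue $\lambda^2 > 0$ and growth rate $\jsrc$ are unchanged.
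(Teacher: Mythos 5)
Your proof follows essentially the same route as the paper's: establish the atom-wise pushforward identity $\pushf{A_{\sigma_{i+1}}}{(\alpha_i\dirac{\hat{x}_i})}=\jsrc\,\alpha_{i+1}\dirac{\hat{x}_{i+1}}$, close the cycle via $\alpha_k\dirac{\hat{x}_k}=\dirac{x_0}$, and aggregate incoming versus outgoing atoms at each node. In fact you are somewhat more careful than the printed proof, which has off-by-one typos in the indices of $A_{\sigma_i}$ and $\dirac{\hat{x}_i}$ and silently assumes $\lambda>0$; your explicit derivation of $\pushf{A}{\dirac{x}}=\enorm{Ax}\dirac{Ax/\enorm{Ax}}$ and the doubled-cycle fix for $\lambda<0$ address both points.
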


\begin{lemma}
    \label{lem:dualbuild}
    Consider \defAGe{}.
    For any natural number $k$, we have
    \[ \cjsrrk \leq \gamlbopt \]
    where $\cjsrrk$ is defined in \eqref{eq:cjsrrk}.
    \begin{proof}
        Let $\opt{c} \in \argmax\big\{\, \jsrc : c \in \G_k, c \text{ is a cycle}\,\big\}$,
        by \lemref{traj2meas} we can build a feasible solution of \progref{dualinf} with $\gamlb = \jsrc$.
        %using \algoref{traj2meas}.
    \end{proof}
\end{lemma}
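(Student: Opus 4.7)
The plan is to leverage \lemref{traj2meas} directly: for each $k$, pick a cycle of length $k$ that realizes $\cjsrrk$, feed it into the construction of \lemref{traj2meas}, and read off a feasible solution of \progref{dualinf} with $\gamlb = \cjsrrk$.

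More concretely, first I would handle the degenerate case: if there is no cycle of length $k$ in $\G$, then the set over which $\cjsrrk$ is defined in \eqref{eq:cjsrrk} is empty, and the inequality is vacuous (or one may take $\cjsrrk = -\infty$ by convention). Otherwise, let $\opt{c} \in \G_k$ be a cycle of length $k$ with $\jsr_{\opt{c}} = \cjsrrk$, and let $\opt{v}_0, \ldots, \opt{v}_k = \opt{v}_0$ be its intermediary nodes. Applying \lemref{traj2meas} to $\opt{c}$ produces measures $\mu_{uv\sigma} \in \Fbpd$ for each $(u,v,\sigma) \in \Arcs$, supported on the normalized iterates $\hat{x}_i$ of a leading eigenvector of $A_{\opt{c}}$, which satisfy the invariance constraint \eqref{eq:dualinf1} with equality for $\gamlb = \jsr_{\opt{c}} = \cjsrrk$.

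The only remaining subtlety is the normalization constraint \eqref{eq:dualinf3}. Since $\alpha_k = 1$, the measure $\mu_{\opt{v}_{k-1}\opt{v}_0\sigma_k}$ contains the Dirac mass $\dirac{\hat{x}_k}$, so the total mass $M \eqdef \sum_{(u,v,\sigma) \in \Arcs} \mu_{uv\sigma}(\Sn)$ is strictly positive. The constraint \eqref{eq:dualinf1} is homogeneous of degree one in the measures, hence replacing each $\mu_{uv\sigma}$ by $\mu_{uv\sigma}/M$ yields a feasible solution of \progref{dualinf} with the same $\gamlb = \cjsrrk$. Therefore $\gamlbopt \geq \cjsrrk$, which is exactly the claim.

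There is no real obstacle here beyond the bookkeeping above; the lemma is essentially a corollary of \lemref{traj2meas}, and the work has already been done in establishing that the atomic measures obtained by propagating a leading eigenvector of $A_{\opt{c}}$ along the cycle satisfy the invariance condition with $\gamlb = \jsr_{\opt{c}}$. The only care needed is to note that the construction produces a non-trivial measure so that rescaling to meet \eqref{eq:dualinf3} is legitimate.
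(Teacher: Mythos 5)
Your proof is correct and follows essentially the same route as the paper's: select a cycle attaining the maximum in \eqref{eq:cjsrrk} and invoke \lemref{traj2meas} to produce a feasible point of \progref{dualinf} with $\gamlb = \cjsrrk$. The extra care you take with the normalization \eqref{eq:dualinf3} and the empty-cycle case is sound bookkeeping that the paper leaves implicit (it notes earlier that this constraint does not affect the optimal value since the feasible set is a cone).
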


In some sense, \lemref{traj2meas} is encoding a trajectory in the measures $\mu_{uv\sigma}$.
%If $\nu_x = \delta_x$ then
We say that the resulting measures are the \emph{occupation measures} of the trajectory $x_0, x_1, \ldots, x_k$ as defined in \lemref{traj2meas}.

%   It is well known that a Lyapunov function for a switched system provides a certificate for stability, this is shown by \theoref{stabcert}.
%   In this section, we introduce measures playing a role dual to Lyapunov function for switched system.
%   These measures provide a certificate for instability, this is shown by \theoref{instabcert} and
%   are solutions of the dual of the program for finding Lyapunov function.
%   To support this claim, we give the two dual programs and show that they are strongly dual in \theoref{duality}.
%   As a side note, we can see that the Joint Spectral Radius Theorem is a corollary of \theoref{stabcert}, \theoref{instabcert} and \theoref{duality} for which the proof does not require any involved result on the joint spectral radius.
%   It can also be noted that none of the proof of the 3 theorems uses any result on the advanced result on the JSR such as the existence of an extremal norm.

%where $\mapp$ is the linear mapping such that $\mapp f(x) = f(Ax)$.
%Hence its adjoint mapping $\mapp^*$ is such that $\mapp^*\Exp[f(x)] = \la\mapp^*\Exp, f(x)\ra = \la \Exp, \mapp f(x) \ra = \la \Exp, f(Ax) \ra$.

\begin{myexem}
  \label{exem:simple0}
  Consider the unconstrained system \cite[Example~2.1]{ahmadi2012joint} with $m=2$:
  \[ \A = \{ A_1 = e_1e_2^\Tr , A_2 = e_2e_1^\Tr  \} \]
  where $e_i$ denotes the $i$th canonical basis vector.

  A solution to \progref{primalinf} is given by
  \[ (f(x), \gamub) = (\enorm{x}, 1). \]
% Indeed, for example, with $A_1$ we have
% \[ \|e_1e_2^\Tr x\|_2 = \|(x_2, 0, 0)\|_2 = \sqrt{x_2^2 + 0} \leq \sqrt{x_1^2 + x_2^2}. \]
  This means that $f(x)$ is a Lyapunov function for the system so as it is well known
  %, and shown in \theoref{stabcert} for completeness,
  this certifies that $\jsr \leq 1$.

  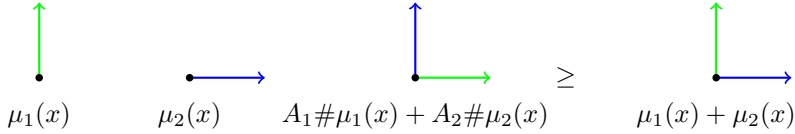
\begin{figure}[!ht]
    \centering
    \begin{tikzpicture}
      \begin{scope}[shift={(-7,0)}]
        \draw[thick,->,green] (0,0) to (0,1);
        \fill (0,0) circle(.05);
        \node at (0,-.5) {$\mu_1(x)$};
      \end{scope}

      \begin{scope}[shift={(-5,0)}]
        \draw[thick,->,blue] (0,0) to (1,0);
        \fill (0,0) circle(.05);
        \node at (0,-.5) {$\mu_2(x)$};
      \end{scope}

      \begin{scope}[shift={(-2,0)}]
        \draw[thick,->,green] (0,0) to (1,0);
        \draw[thick,->,blue] (0,0) to (0,1);
        \fill (0,0) circle(.05);
        \node at (0,-.5) {$\pushf{A_{1}}{\mu_1(x)} + \pushf{A_{2}}{\mu_2(x)}$};
      \end{scope}

      \begin{scope}[shift={(0,0)}]
        \node at (0,0) {$\geq$};
      \end{scope}

      \begin{scope}[shift={(2,0)}]
        \draw[thick,->,green] (0,0) to (0,1);
        \draw[thick,->,blue] (0,0) to (1,0);
        \fill (0,0) circle(.05);
        \node at (0,-.5) {$\mu_1(x) + \mu_2(x)$};
      \end{scope}
    \end{tikzpicture}
    \caption{A representation of the optimal dual solution of \exemref{simple0} with the constraint \eqref{eq:dualinf1}.}
    \label{fig:simple0}
  \end{figure}

  A dual solution $\mu_1$ (resp. $\mu_2$)\footnote{In the arbitrary switching case, we write $\mu_\sigma$ instead of $\mu_{uv\sigma}$ for short}
  for the first (resp. second) matrix has the measure $\mu_1 = \delta_{(0,1)}/2$ (resp. $\mu_2 = \delta_{(1,0)}/2$).
  This is the solution obtained by applying \lemref{traj2meas} to the cycle $(1, 2)$.
  This is shown in \figref{simple0}.

\end{myexem}

\begin{proof}[Proof of \theoref{exact}]
    By \lemref{primalbuild}, \lemref{dualbuild} and \eqref{eq:jsrtheo},
    we know that $\gamubopt \leq \cjsr \leq \gamlbopt$.
    By weak duality between \progref{primalinf} and \progref{dualinf},
    proved in \lemref{duality},
    $\gamlbopt \leq \gamubopt$.
    Therefore equality holds.
\end{proof}

\begin{myrem}
  Occupation measures for continuous switched systems are studied in \cite{claeys2016modal}.
  These measures are supported on the cartesian product of the state space and a finite interval of time $t \in [0, T]$
  while in this paper, the measures are only supported on the subset $\Sn$ of the state space.
  Indeed, since the system \eqref{eq:switchsys} is homogeneous and time-invariant, we can encode trajectories in a measure on $\Sn$ (\lemref{traj2meas}) and
  still be able to recover it (\cororef{instabcert}).

  The measures studied in \cite{hare2011explicit} are supported on the paths in $\G$.
  They are related to the measures studied in this paper since given a cycle $c$,
  we can compute the measures of the trajectory using this switching cycle and the eigenvector of $A_c$ with \lemref{traj2meas}.
\end{myrem}

%   We have shown with \lemref{primalbuild} and \lemref{dualbuild} that $\gamubopt \leq \cjsrk$ and $\cjsrrk \leq \gamlbopt$ for any $k$.
%   By the Joint Spectral Radius Theorem, this means that $\gamlbopt = \gamubopt$.
%   In \lemref{duality}, we provide a direct proof of the duality between the two programs.
%   A direct proof of the inequality $\lim_{k \to \infty} \cjsrk \leq \gamubopt$ is classical
%   and is provided in \theoref{stabcert} for completeness.

%We have seen with \lemref{traj2meas} how to build the occupation measure of a cycle $c$. % a feasible solution of \progref{dualinf} with $\gamlb = \gamma$ from a cycle $c$ with $\rho(A_c)^{1/k} = \gamma$, its occupation measure.
%We are now seeking for a method to do the opposite, recover a cycle from its occupation measure.

One may wonder whether \lemref{traj2meas} also works in the reverse direction to give a \emph{constructive} proof for \cororef{instabcert} when the measures $\mu_{uv\sigma}$ are atomic.
Namely, can we extract a periodic trajectory with $\jsrc \geq \gamlb$ from any atomic feasible solution of \progref{dualinf} with $\gamlb$.
As such solution may be the convex hull of solutions obtained by the construction of \lemref{traj2meas}, we may recover several periodic trajectory, from which there might be only one that satisfies $\jsrc \geq \gamlb$.
The following Lemma provides a constructive way to recover a periodic trajectory $c$ satisfying $\jsrc \geq \gamlb$ in the scalar case%
\footnote{Note that in this case, any measure is atomic since $\Sn$ is zero-dimensional}%
, i.e. $n = 1$%

\begin{lemma}
  \label{lem:instabcerts}
  Consider \defAGs{}.
  If there exists a feasible solution $\mu$ of \progref{dualinf} with $\gamlb$,
  then there exists a cycle $c$ of length $k$ with $\jsrc \geq \gamlb$.
  \begin{proof}
    Let $(\mu, \gamma)$ be the solution.
    By \eqref{eq:dualinf3} and \eqref{eq:dualinf1}, we can find a cycle $c$ for which each edge $e$ has a nonzero measure $\mu_e$.

    If $\jsrc \geq \gamma$, we are done.
    Otherwise, if $\jsrc < \gamma$, using \lemref{traj2meas},
    we can build a feasible solution $\nu$ such that \eqref{eq:dualinf1} is satisfied with equality for $\gamlb = \jsrc$.
    This means that $\mu - \lambda \nu$ is feasible with $\gamma$ for any $\lambda \geq 0$ such that $\mu - \lambda \nu \geq 0$.
    Let $\opt{\lambda}$ be the maximum value of $\lambda$ such that $\max_{\lambda} \mu - \lambda \nu \geq 0$.
    Since $n = 1$, $\Sn$ is zero dimensional so for at leas one edge $e$ of the cycle $c$, $\mu_e - \opt{\lambda} \nu_e$ is zero.
    Moreover, since $\mu_e$ is nonzero for all edge $e$ of the cycle, $\lambda > 0$.
    Therefore, the number of edge with nonzero measure has decreased and at least one of the constraints \eqref{eq:dualinf1} is now satisfied with strict inequality.

    This process can only be repeated finitely many times until we have a zero measure since the number of edges with nonzero measure decrease each time.
    Moreover we will have $\jsrc \geq \gamma$ at least once since the constraints \eqref{eq:dualinf1} cannot be satisfied with strict inequality for the zero measure.
  \end{proof}
\end{lemma}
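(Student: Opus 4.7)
The idea is an iterative peeling of $\mu$: I repeatedly subtract ``trajectory measures'' built by \lemref{traj2meas} until the construction forces a cycle with $\jsrc \geq \gamlb$. The scalar hypothesis $n=1$ is essential because the sphere $\Sn = \{-1,+1\}$ is zero-dimensional, so every element of $\Fbpd$ is a finite nonnegative combination of atoms at only two points, and ``emptying an atom'' is a well-defined combinatorial event that can only happen finitely many times.

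First I would extract a directed cycle from the support. Evaluating \eqref{eq:dualinf1} at each $v \in \Nodes$ and summing against $\Ind{\Sn}$, together with the normalization \eqref{eq:dualinf3}, shows that the support graph $\{(u,v,\sigma)\in\Arcs : \mu_{uv\sigma}\neq 0\}$ is nonempty and that every vertex carrying outgoing support also carries incoming support. In a finite directed graph this forces a directed cycle $c=(\sigma_1,\ldots,\sigma_k)$ through intermediate nodes $v_0,\ldots,v_k=v_0$ whose every edge has positive measure.

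Next, the dichotomy. If $\jsrc \geq \gamlb$, we are done. Otherwise $\jsrc < \gamlb$: apply \lemref{traj2meas} to $c$ and to an eigenvector of $A_c$ realizing $\rho(A_c)$, producing an atomic nonnegative measure family $\nu$ supported on the $c$-edges that satisfies \eqref{eq:dualinf1} with \emph{equality} at parameter $\jsrc$. For $\lambda \geq 0$, consider $\mu - \lambda \nu$. Subtracting the equality at $\jsrc$ from the inequality at $\gamlb$ augments the slack of \eqref{eq:dualinf1} at node $v$ by $\lambda(\gamlb-\jsrc)\sum_{(v,w,\sigma)\in\Arcs}\nu_{vw\sigma}$, which is nonnegative since $\jsrc < \gamlb$. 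Hence $\mu-\lambda\nu$ remains feasible at level $\gamlb$ as long as $\mu-\lambda\nu \geq 0$. Let $\opt{\lambda}$ be the largest such $\lambda$; because $\nu$ is atomic and supported on only finitely many points, this maximum is attained and at least one atom of $\mu$ on a $c$-edge is exhausted, strictly decreasing the cardinality of the atomic support.

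Iterating, the support cardinality (a positive integer) strictly decreases at each step, so the process terminates in finitely many iterations. If it terminated without ever producing a cycle with $\jsrc \geq \gamlb$, we would reach the zero measure, contradicting the normalization \eqref{eq:dualinf3} that is preserved (up to rescaling) along the way. The main obstacle is making rigorous the two combinatorial steps: the cycle extraction, which requires translating the nodewise mass inequality \eqref{eq:dualinf1} into a ``flow'' condition on the support graph and applying a standard cycle-existence argument; and the claim that the peeling step always empties a full atom, which is exactly where the zero-dimensionality of $\Sn$ is used and where a naive extension to $n\geq 2$ would fail, since the subtraction could then only shrink mass without reducing the number of support points.
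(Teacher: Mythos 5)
Your proposal follows essentially the same route as the paper's proof: extract from the support of $\mu$ a cycle $c$ whose every edge carries positive mass, split on whether $\jsrc \geq \gamlb$, and otherwise peel off $\opt{\lambda}\nu$ where $\nu$ is the \lemref{traj2meas} measure of $c$ (which satisfies \eqref{eq:dualinf1} with equality at $\jsrc$), using $n=1$ to guarantee that a maximal peel exhausts an atom and strictly shrinks the finite support. Your explicit slack computation, showing that \eqref{eq:dualinf1} at node $v$ gains $\lambda(\gamlb-\jsrc)\sum_{(v,w,\sigma)\in\Arcs}\nu_{vw\sigma}\geq 0$, is a welcome elaboration of the paper's terser ``this means that $\mu-\lambda\nu$ is feasible.''

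The one step that does not hold as written is your termination/non-vacuity argument. You claim that reaching the zero measure would ``contradict the normalization \eqref{eq:dualinf3} that is preserved (up to rescaling) along the way,'' but the normalization is \emph{not} preserved: each peel strictly removes mass, and nothing in the normalization of the original $\mu$ forbids the iterates from reaching zero. The correct contradiction is with the invariance constraint, and it follows from your own slack computation: if some peel produced the zero measure, the pre-peel measure would equal $\opt{\lambda}\nu$ exactly, and evaluating \eqref{eq:dualinf1} at any node where $\nu$ has outgoing mass would give $0 \geq \opt{\lambda}(\gamlb-\jsrc)\sum\nu_{vw\sigma} > 0$ (since $\jsrc<\gamlb$ in the peeling branch), contradicting feasibility at level $\gamlb$. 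This is what the paper means by ``the constraints \eqref{eq:dualinf1} cannot be satisfied with strict inequality for the zero measure.'' With that substitution your argument closes: the support cardinality strictly decreases, the process cannot reach zero while only encountering cycles with $\jsrc<\gamlb$, so some extracted cycle satisfies $\jsrc\geq\gamlb$.
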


Given a feasible solution of \progref{dualinf} and a common partition of the support of the measures,
we show in \propref{Bsc} how to transform the solution into a solution of a scalar switched system.
Using this transformation, we can always recover a cycle $c$ for which $\jsrc = \gamma$ from a solution of \progref{dualinf} with $\gamlb = \gamma$ for which the measures are atomic.

\begin{proposition}
  \label{prop:Bsc}
  Consider \defAGe.
  Suppose that there exists a feasible solution $\mu$ of \progref{dualinf} with $\gamlb = \gamma$
  and a finite family $\mathcal{S}$ of disjoint subsets of $\Sn$ such that
  the support of each measure is included in the union of the sets of the family $\mathcal{S}$.
  Then there exists sets $B_1, \ldots, B_k \in \mathcal{S}$ and a cycle $\sigma_1, \ldots, \sigma_k$ of $\G$ such that
  \[ \prod_{i=1}^k \max_{x \in B_i} \|A_{\sigma_i} x\|_2 \geq \gamma^k \]
  and $A_{\sigma_i} B_i \cap B_{i+1} \neq \emptyset$ for $i=1, \ldots, k$ where $B_{k+1}=B_1$.
% where $L$ is the maximum over $\gamma$ of the Lipschitz constant of the function $a_\sigma(x) = A_\sigma x / \|A_\sigma x\|$.
  \begin{proof}
%   By assumption, we know that the elements of $\mathcal{S}$ are disjoint and have diameters less than or equal to $\epsilon$.
    Given a set $B \in \mathcal{S}$ and an \arc{} $e \in \Arcs$, let $\mu_e^B$ denote the measure defined as $\mu_e^B(C) = \mu_e(C \cap B)$.
    We consider a new constrained switched system with matrices $\A' \subseteq \R^{1 \times 1}$ and automaton $\G'(\Nodes', \Arcs')$
    where $\Nodes' = \{\, (v,B) \mid v \in \Nodes, B \in \mathcal{S} \,\}$,
    $e'((u,v,\sigma), B, C) = ((u,B), (v,C), (\sigma,B))$,
    $\Arcs' = \{ e'(e, B, C) \mid e \in \Arcs, B, C \in \mathcal{S}, A_e B \cap C \neq \emptyset \}$,
    and $A'_{(\sigma,B)} = \max_{x \in B} \|A_\sigma x\|_2$.
    From any solution $\mu$ of the original system feasible for $\gamlb$, the following solution of the system with matrices $\A'$ and automaton $\G'$
    \[ \mu'_{e'(e,B,C)} = \frac{(\pushf{A_e}{\mu_e^B})(C)}{(\pushf{A_e}{\mu_e})(B)} \mu_e(B) \]
    is also feasible with $\gamlb$.
    Indeed, by construction, for any $v \in \Nodes, B, C \in \mathcal{S}$, we have
    \begin{align}
        \label{eq:lhstransfer}
        \sum_{e \in \Arcsin_{1}(v), B \in \mathcal{S}} \pushf{A_e}{\mu_e^B}(C)
        & =
        \sum_{e \in \Arcsin_{1}(v), B \in \mathcal{S}} \mu_{e'(e, B, C)}' \frac{(\pushf{A_e}{\mu_e})(B)}{\mu_e(B)}
        \stackrel{\eqref{eq:pushfineq}}{\leq}
        \sum_{e \in \Arcsin[\Arcs']_1(v, C)} \pushf{A'_{e}}{\mu_{e}'}\\
        \label{eq:rhstransfer}
        \sum_{e \in \Arcsout_1(v)} \mu_e(C)
        & =
        \sum_{e \in \Arcsout_1(v), D \in \mathcal{S}} \frac{(\pushf{A_e}{\mu_e^C})(D)}{(\pushf{A_e}{\mu_e})(C)} \mu_e(C)
        =
        \sum_{e \in \Arcsout[\Arcs']_1(v, C)} \mu_{e}'.
    \end{align}
    By \eqref{eq:dualinf1} on $\mu$, the left-hand side of \eqref{eq:rhstransfer} is smaller than
    the left-hand side of \eqref{eq:lhstransfer}.
    Therefore, the right-hand side of \eqref{eq:rhstransfer} is smaller than the right-hand side of \eqref{eq:lhstransfer}
    hence $\mu'$ satisfies \eqref{eq:dualinf1} on the new switched system.

    Therefore, by \lemref{instabcerts},
    there is a cycle $(\sigma_1, B_1), \ldots, (\sigma_k, B_k)$ of $\G'$ such that the modes $\sigma_i$ and sets $B_i$ are as required.
    %\[ \prod_{i=1}^k \max_{x \in B_{i-1}} \|A_{\sigma_i} x\|_2 \geq \gamlb^k \]
    %and $A_{\sigma_i} B_{i-1} \cap B_i \neq \emptyset$ where $B_0 = B_k$.
%   By construction, $\sigma_1, \ldots, \sigma_k$ is also a cycle of $\G$,
%   there exists $\tilde{x}_1 \in B_0, \ldots, \tilde{x}_k$ such that $\tilde{x}_i \in B_{i-1}$ and $A_{\sigma_i}\tilde{x}_i \in B_i$
%   and there exists $\opt{x}_1, \ldots, \opt{x}_k$ such that $\opt{x}_i \in B_{i-1}$ and $\|A_{\sigma_i} \opt{x}_i\|_2 = A'_{(\sigma_i,B_i)}$.
%   We choose some $\hat{x}_0 \in B_0$, and define $\hat{x}_i = a(\hat{x}_{i-1})$ for $i = 1, \ldots k$.
%
%   We see that
%   \[ \dist{\hat{x}_i}{B_i} \leq \|a(\hat{x}_i) - a(\hat{x}_i)\|_2 \leq L\|\hat{x}_i - \hat{x}_i\|_2 \leq L(\dist{\hat{x}_{i-1}}{B_{i-1}} + \epsilon). \]
%   Therefore, $\dist{\hat{x}_i}{B_i} \leq \epsilon L \frac{L^i-1}{L-1}$.
%   Using this inequality, we obtain
%   \[ \|A_{\sigma_{i}}\hat{x}_{i-1}\|_2 \geq \|A_{\sigma_i}\opt{x}_i\| - M\|\opt{x}_i - \hat{x}_{i-1}\|_2 \geq \|A_{\sigma_i}\opt{x}_i\| - \epsilon M\left[L \frac{L^{i-1}-1}{L-1} + 1\right] \]
  \end{proof}
\end{proposition}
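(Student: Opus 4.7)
The plan is to reduce the proposition to the scalar case $n = 1$ so that \lemref{instabcerts} can be applied directly. The partition $\mathcal{S}$ points to exactly the right discretization: if a measure is concentrated on a set $B \in \mathcal{S}$, the only feature of $A_\sigma$ that matters for an invariance inequality is the scalar $\max_{x \in B} \|A_\sigma x\|_2$. This is exactly the content of the pushforward inequality \eqref{eq:pushfineq}, and it is the mechanism by which \eqref{eq:dualinf1} is expected to survive the reduction.

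Concretely, I would introduce a new constrained switched system $(\A', \G')$ with one-dimensional matrices. Let $\Nodes' = \Nodes \times \mathcal{S}$, and for each $(u,v,\sigma) \in \Arcs$ and each pair $B, C \in \mathcal{S}$ with $A_\sigma B \cap C \neq \emptyset$, add an arc from $(u, B)$ to $(v, C)$ labelled $(\sigma, B)$ and associated with the scalar $A'_{(\sigma, B)} = \max_{x \in B} \|A_\sigma x\|_2$. Writing $\mu_e^B$ for the restriction of $\mu_e$ to $B$, I would define $\mu'_{e'(e,B,C)}$ so that multiplication by $A'_{(\sigma,B)}$ recovers the portion of the pushforward $\pushf{A_e}{\mu_e^B}$ landing in $C$, with the convention that $\mu'$ is set to zero on coordinates where the required ratio is of the form $0/0$. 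By construction, all arcs of $\G'$ correspond to genuinely non-empty intersections $A_\sigma B \cap C$, so the geometric condition of the conclusion comes for free.

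The main technical step is then to show that $\mu'$ is feasible for \progref{dualinf} on $(\A', \G')$ with the same value $\gamma$. Fixing a node $(v, C) \in \Nodes'$, the left-hand side of the new invariance constraint, after unfolding, equals a sum over incoming arcs $e = (u, v, \sigma)$ and source sets $B$ of $(\pushf{A_\sigma}{\mu_e^B})(C)$, which by \eqref{eq:pushfineq} is bounded above by $\sum \mu'_{e'(e,B,C)} A'_{(\sigma,B)}$. Summing the original constraint \eqref{eq:dualinf1} restricted to the set $C$ over $B \in \mathcal{S}$ and using that $\mathcal{S}$ covers the supports gives the matching lower bound, so $\mu'$ is feasible. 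Applying \lemref{instabcerts} to $\mu'$ produces a cycle in $\G'$ of some length $k$ whose product of labels is at least $\gamma^k$; projecting the cycle onto its $\Nodes$-coordinates and label modes yields the desired $\sigma_1, \ldots, \sigma_k$ and $B_1, \ldots, B_k$.

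The main obstacle, and where I expect to spend most of the care, is the bookkeeping in the feasibility check: one must simultaneously condition each $\mu_e$ on its source set $B$ and its target set $C$, and it is easy to sum over the wrong index or to double-count mass supported near the boundary of two sets of $\mathcal{S}$. The other subtle point is the well-definedness of the ratios entering $\mu'$, which is handled cleanly by the $0/0 = 0$ convention, since a zero denominator forces the numerator (a portion of the same pushforward) to vanish as well.
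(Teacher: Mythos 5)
Your proposal is correct and follows essentially the same route as the paper: the same auxiliary scalar system on $\Nodes \times \mathcal{S}$ with $A'_{(\sigma,B)} = \max_{x \in B}\|A_\sigma x\|_2$, the same transfer of mass from $\mu_e$ restricted to $B$ into the arcs $e'(e,B,C)$, feasibility via \eqref{eq:pushfineq} applied to the invariance constraint evaluated on each $C$, and the final appeal to \lemref{instabcerts}. The only cosmetic difference is your explicit $0/0 = 0$ convention for the ratios defining $\mu'$, which the paper leaves implicit.
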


\begin{myexem}
  \label{exem:simple05}
  Consider the dual solution obtained in \exemref{simple0}.

  The supports of $\mu_1$ and $\mu_2$ are respectively $B_1 = \{(0, 1)\}$ and $B_2 = \{(1, 0)\}$.
  The automaton $G'(V', E')$ obtained by the transformation of \propref{Bsc} is defined by
  $V' = \{(1, B_1), (1, B_2)\}$ and $E' = \{((1, B_1), (1, B_2), (1, B_1)), ((1, B_2), (1, B_1), (2, B_2))\}$.
  The new $1 \times 1$ matrices are $A'_{(1, B_1)} = 1$ and $A'_{(2, B_2)} = 1$.

  The computation of the CJSR of this scalar system is a \emph{maximum cycle mean} problem
  as outlined in \cite{ahmadi2012joint}.
  The cycle of maximum geometric mean is $((1, B_1), (2, B_2))$ which geometric mean $\sqrt{1 \cdot 1} = 1$.
  We recover the cycle $(1, 2)$ found in \exemref{simple0}.
\end{myexem}

\begin{myrem}
    For some systems, the \emph{finiteness property} does not hold, that is,
    there is no finite cycle $c$ for which $\jsrc$ equals the CJSR.
    For these system, the optimal solutions of \progref{dual} cannot be atomic.
    Can \propref{Bsc} be used to provide a constructive proof of \cororef{instabcert} in this case ?

%   However, the solutions of \progref{dualinf} may also be non-atomic.
    For an arbitrarily small $\epsilon$, using \propref{Bsc} with $\mathcal{S}$ equal to an $\epsilon$-covering of $\Sn$,
    we obtain a cycle $c$ but instead of a eigenvector, we have an ``eigenset'' $B_1$.
    Since the length of the cycle can be large, $A_c B_1$ can have a large diameter too.
    It is not clear how to obtain an eigenvector from this.
    %Hence it is not clear how to build a constructive proof of \theoref{instabcert} using \propref{Bsc}.
    %Hence it seems that the direct relation between \progref{dualinf} and the definition of the JSR with the spectral radius
    %would be with a slightly different definition of the spectral radius using ``eigensets'' instead of eigenvectors.
    %It remains to show that these two definitions are equivalent.
\end{myrem}

\section{Automaton-dependent bounds}
In this section, we introduce a method to approximate the CJSR using SOS programming and provide a new guarantee relating its accuracy with the spectral radius of the adjacency matrix and the $p$-radius; the definition of the $p$-radius can be found in the \appref{pradius}.
\label{sec:primal}

\subsection{Sum of squares programming}
\label{sec:sosprog}
Deciding whether a multivariate polynomial of degree $2d \geq 4$ is nonnegative is known to be NP-hard.
However a sufficient condition for a polynomial to be nonnegative is easy to check.
We say that a polynomial is a \emph{sum of squares} (SOS) if there exist
polynomials $q_1, \ldots, q_M$ such that
\[ p(x) = \sum_{k=1}^Mq_k^2(x). \]
If a polynomial is SOS, then it is obviously nonnegative.

It is well known that if $p(x)$ is an homogeneous polynomial of degree
$2d$ then each $q_k(x)$ must be an homogeneous polynomial of degree $d$;
this can be shown easily using the Newton polytope of $p(x)$ and
\cite[Theorem~1]{reznick1978extremal}.  Let $x^{[d]}$ represent a
basis of the homogeneous polynomials of degree $d$.  We can check
whether a polynomial is SOS using semidefinite programming thanks to
the following theorem.
\begin{theorem}[\cite{choi1995sums,nesterov2000squared,parrilo2000structured,parrilo2003semidefinite,shor1987class}]
  \label{theo:sos}
  A homogeneous multivariate polynomial $p(x)$ of degree $2d$ is a sum
  of squares if and only if
  \[ p(x) = (x^{[d]})^\Tr Q x^{[d]} \]
  where $Q$ is a symmetric positive semidefinite matrix.
\end{theorem}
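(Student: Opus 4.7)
The plan is to prove both directions by explicitly translating between Gram representations of $p$ and sum-of-squares decompositions, using the already-cited fact that homogeneity of $p$ forces homogeneity of the summands $q_k$.

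For the ``only if'' direction, suppose $p(x) = \sum_{k=1}^M q_k^2(x)$. By the observation attributed to \cite{reznick1978extremal} and stated just before the theorem, each $q_k$ is a homogeneous polynomial of degree $d$, so there exists a coefficient vector $c_k$ such that $q_k(x) = c_k^\Tr x^{[d]}$. Substituting yields
\[
  p(x) = \sum_{k=1}^M (c_k^\Tr x^{[d]})^2 = (x^{[d]})^\Tr \Bigl(\sum_{k=1}^M c_k c_k^\Tr \Bigr) x^{[d]},
\]
and the matrix $Q \eqdef \sum_{k=1}^M c_k c_k^\Tr$ is symmetric and positive semidefinite as a sum of rank-one PSD matrices.

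For the ``if'' direction, assume $p(x) = (x^{[d]})^\Tr Q x^{[d]}$ with $Q \succeq 0$. Diagonalize $Q = \sum_{k=1}^M \lambda_k u_k u_k^\Tr$ with $\lambda_k \geq 0$ (spectral theorem, or equivalently a Cholesky-type factorization $Q = LL^\Tr$), and set $c_k = \sqrt{\lambda_k}\, u_k$. Then
\[
  p(x) = (x^{[d]})^\Tr \Bigl(\sum_{k=1}^M c_k c_k^\Tr\Bigr) x^{[d]} = \sum_{k=1}^M \bigl(c_k^\Tr x^{[d]}\bigr)^2,
\]
which is a sum of squares with $q_k(x) = c_k^\Tr x^{[d]}$.

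There is no real obstacle here; the only subtle point is that the representation $p(x) = (x^{[d]})^\Tr Q x^{[d]}$ is generally not unique when the monomials in $x^{[d]}$ satisfy algebraic dependencies through multiplication (the ``Gram matrix ambiguity''), but this non-uniqueness affects only the choice of $Q$, not the equivalence being proved: for any specific PSD $Q$ yielding $p$, the backward construction produces a valid SOS decomposition, and conversely any SOS decomposition produces some PSD $Q$. This equivalence is precisely what reduces the SOS feasibility question to a semidefinite program over the matrix variable $Q$, as will be exploited throughout \Sect\ref{sec:primal}.
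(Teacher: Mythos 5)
The paper states this theorem as a cited classical result and provides no proof of its own, so there is nothing internal to compare against; your argument is the standard one from the cited references and is correct. Both directions are handled properly: homogeneity of the $q_k$ (justified by the Reznick observation stated just before the theorem) yields coefficient vectors $c_k$ with $q_k(x) = c_k^\Tr x^{[d]}$ and hence $Q = \sum_k c_k c_k^\Tr \succeq 0$, while conversely any PSD factorization of $Q$ produces an explicit SOS decomposition, and your remark that the Gram-matrix non-uniqueness does not affect the equivalence is accurate.
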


From the exact arithmetic viewpoint, the basis $x^{[d]}$ chosen in \theoref{sos} does
not affect whether $p(x)$ is SOS or not.  A specific choice of basis
may however improve the numerical behaviour of the corresponding
semidefinite program.

We denote the set of homogeneous polynomials of degree $2d$ as
$\Rh[x]$, the cone of homogeneous SOS polynomials of degree $2d$ as
$\Sosh$ and the dual of $\Sosh$ as $\Soshd$.

A common interpretation of the dual space $\Rhd$ of linear functionals on homogeneous polynomials of degree $2d$ is the space of moments of momonials of degree $2d$.
If $p(x) = a^\Tr x^{[d]}$ and $m$ is the vector of moments of $x^{[d]}$ of a measure $\mu$ then
\[ \la m, a \ra = \int p(x) \dif \mu = \la \mu, p \ra. \]
As a sum of squares polynomial is nonnegative,
this integral is nonnegative for any measure.
Therefore, given a moment vector $m$, a necessary condition for a measure to exist with these moments is that $\la m, a \ra \geq 0$ for any vector of coefficients $a$ of a sum of squares polynomial.
That is, $\Soshd$ is a superset of the set of moments of measures.
The members of $\Soshd$ are often called \emph{pseudo-measures} and denoted $\pmu$; see~\cite{barak2012hypercontractivity}.
%Using this interpretation, $\Soshd$ is the set of moments such that for any sum of square polynomial a

\subsection{CJSR Approximation via SOS}
\label{sec:sos}
The $2d$th root of homogeneous polynomials of degree $2d$ can be used as Lyapunov function.
\begin{proposition}
  \label{prop:pos}
  Consider \defAGe{}.
  If there exist $|\Nodes|$ strictly positive homogeneous polynomials $p_v(x)$ of degree $2d$
  such that
  \[ p_{v}(A_\sigma x) \leq \gamub^{2d}p_{u}(x) \]
  holds for all \arc{} $(u,v,\sigma) \in \Arcs$.
  Then $\cjsr \leq \gamub$.
  \begin{proof}
    Define $f_v(x) = [p_v(x)]^{\frac{1}{2d}}$ and use \theoref{stabcert}.
  \end{proof}
\end{proposition}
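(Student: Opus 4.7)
The plan is to reduce Proposition~\ref{prop:pos} to Program~\ref{prog:primalinf} by taking $2d$-th roots. Concretely, I would define $f_v(x) \eqdef p_v(x)^{1/(2d)}$ for each $v \in \Nodes$ and show that $(f_v)_{v\in\Nodes}$ is a feasible point of the primal program with objective value $\gamub$; the inequality $\cjsr \leq \gamub$ would then follow from $\gamubopt = \cjsr$ in \theoref{exact} (equivalently, from the primal direction stated in \theoref{stabcert}).

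The verification proceeds in three short steps. First, since $p_v$ is homogeneous of degree $2d$ and strictly positive on $\Sn$, the function $f_v$ is well-defined on $\R^n \setminus \{0\}$, continuous, homogeneous of degree one, and strictly positive on $\Sn$; hence $f_v \in \Fpp$, which discharges the positivity condition of Program~\ref{prog:primalinf}. Second, the Lyapunov inequality \eqref{eq:primalinf1} follows directly from the hypothesis by monotonicity of $t \mapsto t^{1/(2d)}$ on $[0,\infty)$: for every arc $(u,v,\sigma) \in \Arcs$ and every $x$,
\[
  f_v(A_\sigma x) = p_v(A_\sigma x)^{1/(2d)} \leq \bigl(\gamub^{2d} p_u(x)\bigr)^{1/(2d)} = \gamub \, f_u(x).
\]
Third, the normalization \eqref{eq:primalinf3} is harmless: rescaling every $f_v$ by a common positive constant leaves both the Lyapunov inequality and the sign of $f_v$ invariant, so we may assume without loss of generality that $\sum_v \int_{\Sn} f_v(x)\,\mathrm{d}x = 1$.

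There is essentially no obstacle here; the only subtlety is to confirm that $f_v$ genuinely lies in the function class $\F$ used by Program~\ref{prog:primalinf}, which is guaranteed by the strict positivity and homogeneity of the $p_v$ (so the $2d$-th root is smooth away from the origin and degree-one homogeneous). With feasibility established, applying \theoref{stabcert} (the primal half of \theoref{exact}) yields $\cjsr \leq \gamub$, completing the argument.
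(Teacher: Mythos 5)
Your proposal is correct and takes exactly the route the paper does: define $f_v(x) = [p_v(x)]^{1/(2d)}$ and invoke \theoref{stabcert}, with the monotonicity of the $2d$-th root giving the Lyapunov inequality. You have merely filled in the routine feasibility checks that the paper's one-line proof leaves implicit.
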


We relax the positivity condition of \propref{pos} by the more tractable sum of squares (SOS) condition and
define $\jsrsos$ as the solution of the following SOS restriction of \progref{primalinf}.
\begin{myprog}[Primal]
  \label{prog:primal}
  \begin{align}
    %\label{eq:sosprog}
    \notag
    \inf_{p_v(x) \in \Rh[x],\gamub \in \R} \gamub\\
    %\label{eq:soscons2}
    \notag
    \gamub^{2d} p_u(x) - p_v(A_\sigma x) & \text{ is SOS}, \quad \forall (u, v, \sigma) \in \Arcs,\\
    \label{eq:soscons1}
    p_v(x) & \text{ is SOS}, \quad \forall v \in \Nodes,\\
    \label{eq:soscons3}
    p_v(x) & \text{ is strictly positive}, \quad \forall v \in \Nodes,\\
    \notag
    \sum_{v \in V} \int_{\mathbb{S}^{n-1}} p_v(x) \dif x & = 1.
  \end{align}
\end{myprog}

\begin{myrem}
  In practice we can replace \eqref{eq:soscons1} and \eqref{eq:soscons3} by ``$p_v(x) - \epsilon\|x\|_2^{2d}$ is SOS''
  for any $\epsilon > 0$.
%  It is not necessary for $\epsilon$ to be small
% since without \eqref{eq:soscons3} the feasible set is a cone.
  This constrains $p_v(x)$ to be in the interior of the SOS cone,
  which is sufficient for $p_v(x)$ to be strictly positive.
  The bounds given in \Sect\ref{sec:quality} are valid
  if $p_v(x)$ is in the interior of the SOS cone.
  %A typical choice is $\epsilon = 1$.
  %However this is not required since in practice, SOS solvers, relying on interior point methods, always return solutions which are strictly positive (see...). % Does Does not work
\end{myrem}

%Suppose we have chosen one of the approach of Remark~\ref{rem:strict} to ensure the strict positivity of $p_v(x)$ for all $v \in \Nodes$.
By \propref{pos},
a feasible solution of Program~\ref{prog:primal} gives an upper bound for $\cjsr$,
and thus, for any positive degree $2d$,
\begin{equation}
  \label{eq:upperbound}
  \cjsr \leq \jsrsos.
\end{equation}

%When does there exists nonnegative homogeneous polynomials $p_1(x), \ldots, p_{|\Nodes|}(x)$ that are not all strictly positive
%but are feasible solutions of the SOS program~\eqref{eq:sosprog} ?

\begin{myexem}
  \label{exem:simple1}
  Consider the unconstrained system \cite[Example~2.1]{ahmadi2012joint} with $m=3$:
  \[ \A = \{ A_1 = e_1e_2^\Tr , A_2 = e_2e_3^\Tr , A_3 = e_3e_1^\Tr  \} \]
  where $e_i$ denotes the $i$th canonical basis vector.

  For any $d$, a solution to Program~\ref{prog:primal} is given by
  \[ (p(x), \gamma) = (x_1^{2d} + x_2^{2d} + x_3^{2d}, 1). \]
% Indeed, for example, with $A_1$ we have
% \[ p(e_1e_2^\Tr x) = p(x_2, 0, 0) = x_2^{2d} + 0 + 0 \leq x_1^{2d} + x_2^{2d} + x_3^{2d}. \]
\end{myexem}

\begin{myexem}
  \label{exem:run2}
  Let us reconsider our running example; see Example~\ref{exem:run1}.
  The optimal solution of Program~\ref{prog:primal}
  is represented by
  Figure~\ref{fig:primal} for $2d = 2$, 4, 6, 8, 10 and 12.
%  We can see a big difference between the shape of the sublevel sets
%  for $2d = 2$ and $2d = 4$
%  while between $2d=4$ and $2d=12$,
%  the difference seems to be more subtle.
  \begin{figure}[!ht]
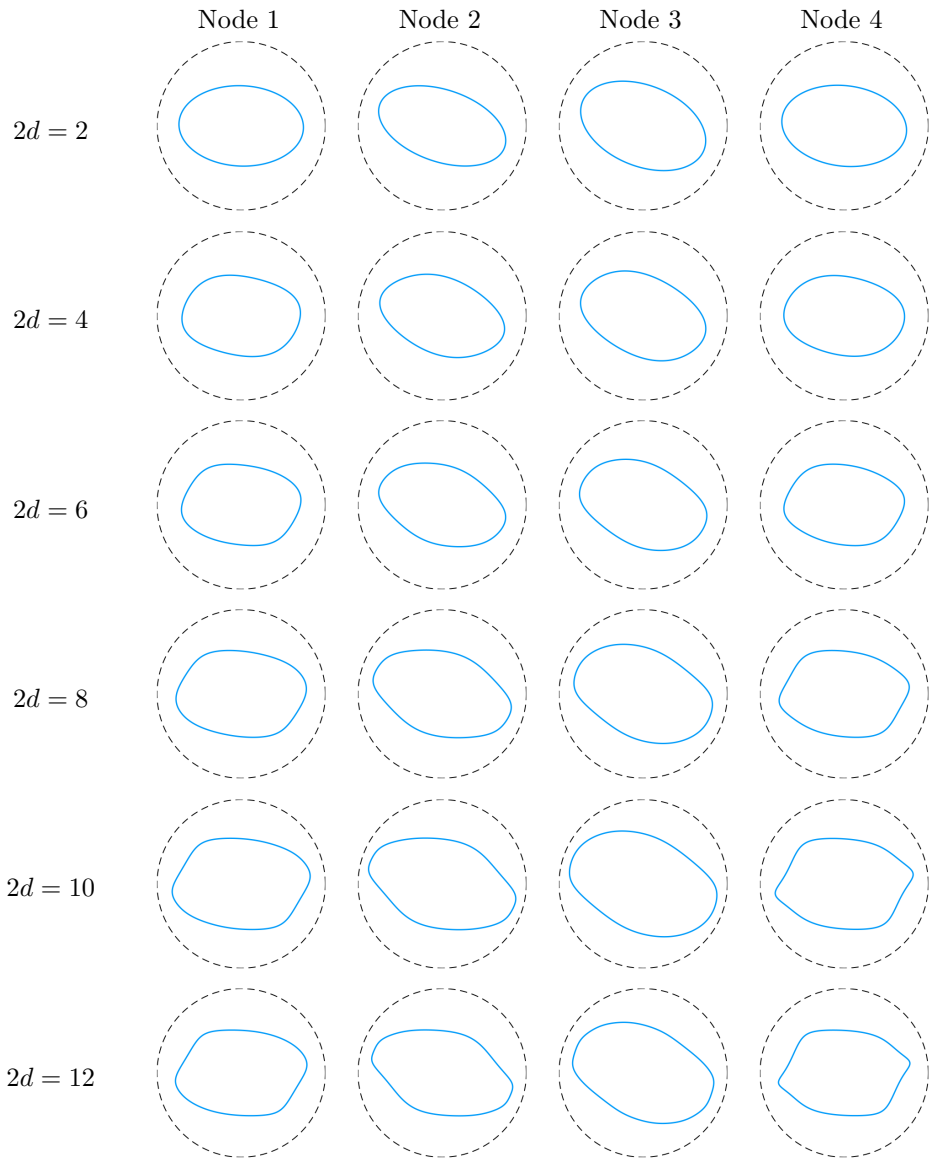

    \centering
    \tabulinesep=0.5mm
    \begin{tabu}{{X[0.85,c,m]X[c,m]X[c,m]X[c,m]X[c,m]}}
      & Node 1 & Node 2 & Node 3 & Node 4\\
      \PrimalFig{2}{1}
      \PrimalFig{4}{2}
      \PrimalFig{6}{3}
      \PrimalFig{8}{4}
      \PrimalFig{10}{5}
      \PrimalFig{12}{6}
    \end{tabu}
    \caption{
    Representation of the solutions to Program~\ref{prog:primal}
    with different values of $d$ for the running example.
    The blue curve represents the boundary of the
    1-sublevel set of the optimal
    solution $p_v$ at each node $v \in \Nodes$.
    The dashed curve is the boundary of the unit circle.
    Observe that some sets are not convex.}
    \label{fig:primal}
  \end{figure}
\end{myexem}

%\begin{myrem}
%  \label{rem:run}
%  \color{red}
%  The shapes reported in \figref{primal} are different than the ones reported in \cite{legat2016generating}.
%  This is due to a different implementation of \progref{primal}.
%  Using the notation of \cite{parrilo2008approximation}, we compute $\rho_{\mathrm{SOS},2d}$ in this paper and we compute $\rho_{\mathrm{CQ},2d}$ in \cite{legat2016generating}.
%\end{myrem}

%\todo{In case of irreducibility (as defined in \cite{philippe2016stability}), we can replace the inf by a min in \progref{primal} and we can say that there is no solution with polynomial that is nonzero+nonnegative but not positive. Should we add that ?}

\subsection{Approximation guarantees}
\label{sec:quality}
In this section, we provide a new bound that relates the accuracy of Program~\ref{prog:primal} to the $p$-radius and the spectral radius of the adjacency matrix of the automaton.
The $p$-radius is a generalization of the joint spectral radius;
we recover the JSR with $p=\infty$.
The definition of the $p$-radius and its property can be found in \appref{pradius}; see \cite{jungers2011fast} for an introduction.

An important property of the $p$-radius is that it is increasing in $p$.
\begin{lemma}
  \label{lem:incp}
  Consider \defAG{}.
  For any integers $p \leq q$,
  \begin{equation}
    \label{eq:incp}
    \cpr[p] \leq \cpr[q] \leq \cjsr \leq \rho(A(\G))^{\frac{1}{q}}
    \cpr[q] \leq \rho(A(\G))^{\frac{1}{p}} \cpr[p].
  \end{equation}
\end{lemma}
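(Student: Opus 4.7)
The plan is to chain together four elementary inequalities, each corresponding to one link in \eqref{eq:incp}, treating $\|A_s\|$ as a nonnegative real variable indexed by admissible paths $s \in \G_k$ and letting $N_k \eqdef |\G_k|$. The two crucial background facts I will invoke are: (i) the $p$-radius from the appendix is defined via the normalized average $\cpr[p] = \lim_{k\to\infty} \bigl(N_k^{-1} \sum_{s\in\G_k} \|A_s\|^p\bigr)^{1/(pk)}$, and (ii) $\lim_{k\to\infty} N_k^{1/k} = \rho(A(\G))$, which follows from $N_k = \mathbf{1}^\Tr A(\G)^k \mathbf{1}$ and Gelfand's formula applied to the (irreducible) adjacency matrix $A(\G)$.

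First I would establish $\cpr[p] \leq \cpr[q]$ by the power-mean (Jensen) inequality applied to the uniform probability measure on $\G_k$: for $p\leq q$,
\[
\Bigl(N_k^{-1}\!\!\sum_{s\in\G_k}\!\|A_s\|^p\Bigr)^{1/p} \leq \Bigl(N_k^{-1}\!\!\sum_{s\in\G_k}\!\|A_s\|^q\Bigr)^{1/q};
\]
taking $k$th roots and letting $k \to \infty$ gives the first link. The second link $\cpr[q] \leq \cjsr$ is immediate from the fact that a normalized average is at most the maximum: $N_k^{-1}\sum_s \|A_s\|^q \leq \max_s \|A_s\|^q = [\cjsrk]^{qk}$, and applying \eqref{eq:jsrtheo}.

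For the third link $\cjsr \leq \rho(A(\G))^{1/q}\cpr[q]$, I would bound the max by the sum: $\max_s \|A_s\|^q \leq \sum_s \|A_s\|^q$, so $\cjsrk \leq N_k^{1/(qk)} \bigl(N_k^{-1}\sum_s \|A_s\|^q\bigr)^{1/(qk)}$; taking $k\to\infty$ and using fact (ii) yields the bound. For the final link $\rho(A(\G))^{1/q}\cpr[q] \leq \rho(A(\G))^{1/p}\cpr[p]$, I would invoke the elementary discrete $\ell^q \subseteq \ell^p$ embedding: for nonnegative reals $x_s$ and $p \leq q$, scaling so that $\max_s x_s = 1$ gives $x_s^q \leq x_s^p$, hence $\bigl(\sum_s x_s^q\bigr)^{1/q} \leq \bigl(\sum_s x_s^p\bigr)^{1/p}$; applied to $x_s = \|A_s\|$ this reads $N_k^{1/q}(N_k^{-1}\sum\|A_s\|^q)^{1/q} \leq N_k^{1/p}(N_k^{-1}\sum\|A_s\|^p)^{1/p}$, and taking $k$th roots delivers the last inequality.

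The proof is entirely composed of standard $L^p$-norm comparisons, so there is no substantive obstacle in any single step; the only point requiring care is the justification that $N_k^{1/k}$ converges to $\rho(A(\G))$ — if the automaton is only assumed strongly connected this is Perron--Frobenius, whereas for an arbitrary $\G$ one appeals to Gelfand's formula directly — and the verification that the limits defining $\cpr[p]$ exist (which should be established in \appref{pradius}, allowing me to cite rather than re-derive this fact).
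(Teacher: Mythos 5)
Your proposal is correct and follows essentially the same route as the paper: the paper's (one-line) proof in \appref{pradius} invokes exactly the power-mean inequality for $\cpr[p]\leq\cpr[q]\leq\cjsr$ and the monotonicity of $\ell^p$-norms for the remaining links, together with \lemref{rhoAG} for $|\Arcs_k|^{1/k}\to\rho(A(\G))$; you have merely written out the details it leaves implicit. The only nitpick is the harmless conflation of $|\G_k|$ (label tuples) with $|\Arcs_k|$ (paths), the latter being the normalization actually used in the definition of $\cpr$ and the quantity equal to $\mathbf{1}^\Tr A(\G)^k\mathbf{1}$.
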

The proof can be found in \appref{pradius}.
This Lemma is already known in the unconstrained case~\cite{zhou2002p}.

\begin{myrem}
  \lemref{incp} shows that the $p$-radius provides an upper and lower bound to the CJSR.
  It is known that the $2d$-radius can be computed
  either by computing a spectral radius~\cite{blondel2005computationally}
  or by solving a linear program~\cite{parrilo2008approximation}
  (see \cite{ogura2016efficient} for computation algorithms when $p$ is not an even integer).
  We need to do either of these two operations on
  either the sum of the $d$th Kronecker power of the veronese 2-lift of the matrices
  or the sum of the veronese $2d$-lift of the matrices.
  However, the size of the matrices obtained by taking the $d$th Kronecker power grows rapidly
  and the veronese lifting needs to compute permanents which is very computationally demanding.
  Usually, for the same $d$, \progref{primal} gives approximations of the CJSR with a much higher accuracy at the cost of solving an SDP on smaller matrices.
  While solving an SDP is more demanding than computing a spectral radius or solving a linear program, since the matrices are smaller than with the Kronecker powering
  and we do not need to compute permanents, this operation this method is actually more scalable.
  % TODO ref to the computation section
\end{myrem}

In this section, we show the following bound stating that the solution found by \progref{primal} is at least as good as the bound obtained by computing the $2d$-radius.
\begin{theorem}
  \label{theo:second}
  Consider \defAG{} and a positive integer $d$.
  The approximation given by \progref{primal} using homogeneous polynomials of degree $2d$
  satisfies:
  \begin{equation}
    \label{eq:second}
    \jsrsos \leq \rho(A(\G))^{\frac{1}{2d}}\cdr \leq \rho(A(\G))^{\frac{1}{2d}} \cjsr
  \end{equation}
  where $A(\G)$ is the adjacency matrix of $\G$.
\end{theorem}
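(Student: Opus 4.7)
The plan is to exhibit an SOS Lyapunov tuple $(p_v)_{v \in \Nodes}$ that is feasible for \progref{primal} with objective value $\rho(A(\G))^{1/(2d)}\cdr$. The second inequality in \eqref{eq:second} is just $\cdr \leq \cjsr$, which follows from \lemref{incp}, so everything reduces to building such a tuple.

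I would use a Perron--Frobenius argument on the cone of SOS polynomials. Define the linear operator $T$ on the finite-dimensional space of $|\Nodes|$-tuples of homogeneous polynomials of degree $2d$ by
\[ (T\Phi)_u(x) \eqdef \sum_{(u,v,\sigma) \in \Arcs} \Phi_v(A_\sigma x), \quad u \in \Nodes. \]
Because composition with a linear map preserves the SOS property and sums of SOS polynomials are SOS, $T$ maps the closed convex cone $(\Sosh)^{|\Nodes|}$ into itself, and strong connectivity of $\G$ further ensures that some iterate $T^N$ maps this cone into its interior. The Perron--Frobenius / Krein--Rutman theorem for cone-preserving linear maps then supplies an eigenvalue $\lambda = \rho(T)$ attained at an eigenvector $(p_v)_{v \in \Nodes}$ in which every $p_v$ is a strictly positive SOS homogeneous polynomial of degree $2d$.

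The decisive identity to establish is
\[ \rho(T) = \rho(A(\G)) \cdot \cdr^{2d}. \]
Iterating on the tuple $\Phi_v(x) = \|x\|_2^{2d}$ gives $(T^k \Phi)_u(x) = \sum_{s \in \Arcsout_k(u)} \|A_s x\|_2^{2d}$, whose growth rate in $k$ is $\rho(T)$. On the other hand, this sum has $|\Arcsout_k(u)| \sim \rho(A(\G))^k$ terms and the $2d$-th mean of $\|A_s x\|_2$ along admissible paths grows like $\cdr^{2dk}$ by the spectral characterization of the $p$-radius for $p = 2d$ recalled in \appref{pradius}. Matching these two expressions is the main obstacle of the proof; the Perron--Frobenius step and the SOS preservation are routine.

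Given the identity, the conclusion is immediate: for every $(u, v, \sigma) \in \Arcs$, nonnegativity of the other summands yields
\[ \lambda p_u(x) - p_v(A_\sigma x) = \sum_{(u,v',\sigma') \in \Arcs \setminus \{(u,v,\sigma)\}} p_{v'}(A_{\sigma'} x), \]
which is itself a sum of SOS polynomials and hence SOS. After rescaling to satisfy \eqref{eq:primalinf3}, the tuple $(p_v)_{v \in \Nodes}$ is therefore feasible for \progref{primal} with $\gamub^{2d} = \lambda = \rho(A(\G))\,\cdr^{2d}$, yielding $\jsrsos \leq \rho(A(\G))^{1/(2d)} \cdr$ as desired.
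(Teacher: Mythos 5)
Your operator $T$ is essentially the linear part of the paper's iteration \eqref{eq:iter}, and your target inequality $\rho(T)\le\rho(A(\G))\,\cdr^{2d}$ is exactly the convergence criterion the paper establishes; so the route is the same in substance, but your packaging of it through an exact Perron--Frobenius eigenvector introduces a genuine gap. The claim that strong connectivity of $\G$ forces some $T^N$ to map $(\Sosh)^{|\Nodes|}$ into its interior is false in general: since $(T^N\Phi)_u(x)=\sum_{s\in\Arcsout_N(u)}\Phi_{s(N+1)}(A_sx)$, every summand vanishes on the kernel of $A_s$, and if the matrices are singular (as in the paper's own Example~\ref{exem:simple1}, where they have rank one) the whole sum can vanish at nonzero points for every $N$. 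Without primitivity, Krein--Rutman only gives an eigenvector in the cone, possibly on its boundary, so the $p_v$ you extract need not be strictly positive and the tuple need not satisfy \eqref{eq:soscons3}; a concrete failure is $\A=\{e_1e_1^\Tr\}$, whose eigenvector is $x_1^{2d}$. The paper sidesteps this by not using an eigenvector at all: it runs the affine iteration $p_{k+1}=q+\iterc^{-1}Tp_k$ with $q$ in $\inte(\Sosh)^{|\Nodes|}$, i.e.\ the Neumann series for the resolvent, whose limit equals $q+\iterc^{-1}T(\cdot)$ and is therefore automatically interior (\lemref{conv}). Your eigenvector argument can be repaired the same way, by taking $\lambda>\rho(T)$ and $\sum_k\lambda^{-k}T^kq$, but that is precisely the paper's proof.

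The second issue is that the step you defer as ``the main obstacle'' is the actual content of the theorem. What is needed (and all that is needed --- equality of $\rho(T)$ with $\rho(A(\G))\cdr^{2d}$ is not required, only the upper bound) is that $\bigl[\sum_{s\in\Arcsin_k(v)}\|A_s\|^{2d}\bigr]^{1/k}$ tends to $\rho(A(\G))\,\cdr^{2d}$, together with a bound $q(A_sx)\le\beta\|A_s\|^{2d}\|x\|^{2d}$ valid for every form $q$ of degree $2d$; the former is exactly \lemref{rhoAG} combined with \remref{path}, which separates $|\Arcs_k|^{1/k}\to\rho(A(\G))$ from the normalized $2d$-radius limit. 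Your ``$\sim$'' heuristics for the number of paths and for the mean of $\|A_sx\|^{2d}$ are the statements that must be proved, not invoked. If you also want the converse inequality $\rho(T)\ge\rho(A(\G))\cdr^{2d}$ (which your plan asserts but the theorem does not need), you would additionally have to justify that the growth rate of $T^k$ on the interior point $(\|x\|_2^{2d})_{v\in\Nodes}$ of the invariant proper cone equals $\rho(T)$, in the spirit of \lemref{unconstrainedinv}.
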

Note that the second inequality in \eqref{eq:second} is simply \eqref{eq:incp}.
This theorem is proved at the end of this section.

As a corollary of Theorem~\ref{theo:second}, in the trivial cases such that $\rho(A(\G)) = 1$, the approximation is exact.
This corresponds to the case where every node of $\G$ has indegree and outdegree 1.
In that case, the graph forms a cycle of some length $k$ and the CJSR is simply the $k$th root of the spectral radius of the product of the matrices along this cycle.
%It is reassuring to see that the SOS method is exact in this trivial case.

In the general case, the following approximation guarantee is known (note that the bound does not take into account the particular structure of the automaton):
%Based on \cite{parrilo2008approximation}, the following accuracy guarantees for Program~\ref{prog:primal} is shown in \cite{philippe2016stability}.
\begin{theorem}[{\cite[Theorem~3.6]{philippe2016stability}}]
  \label{theo:matt}
  Consider \defAGn{} and a positive integer $d$.
  The approximation $\jsrsos$ given by Program~\ref{prog:primal} using homogeneous polynomials of degree $2d$
  satisfies:
  \[ \jsrsos \leq {n+d-1 \choose d}^{\frac{1}{2d}} \cjsr. \]
\end{theorem}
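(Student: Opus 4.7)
The plan is to mimic the classical converse-Lyapunov strategy: first build an honest norm-based Lyapunov certificate for $\cjsr + \epsilon$, then replace each $2d$-th power of a norm by a sum-of-squares polynomial that sandwiches it within a controlled factor, and finally verify that this yields a feasible point of \progref{primal} with objective value $\binom{n+d-1}{d}^{1/(2d)}(\cjsr+\epsilon)$. Letting $\epsilon\to 0$ gives the theorem.

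Concretely, I would first invoke the converse Lyapunov theorem for constrained switched systems (which can be obtained, for example, by setting $f_v(x) = \sup_{s \in \Arcsout_k(v)} \|A_s x\|/(\cjsr+\epsilon)^k$ for $k$ large as in \lemref{primalbuild}, together with a homogeneous smoothing): for every $\epsilon > 0$ there exist norms $\|\cdot\|_v$ on $\R^n$, one per node $v \in \Nodes$, such that
\[
\|A_\sigma x\|_v \leq (\cjsr+\epsilon)\,\|x\|_u \qquad \forall (u,v,\sigma)\in \Arcs.
\]
Next I would state and use the key approximation lemma: for every norm $\|\cdot\|$ on $\R^n$ and every positive integer $d$, there exists a strictly positive homogeneous SOS polynomial $p(x)$ of degree $2d$ satisfying
\[
\|x\|^{2d} \leq p(x) \leq N\,\|x\|^{2d}, \qquad N \eqdef \binom{n+d-1}{d}.
\]

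Given this lemma, the theorem is essentially combinatorial assembly. Apply the lemma to each $\|\cdot\|_v$ to obtain SOS polynomials $p_v$, and compute
\[
p_v(A_\sigma x) \;\leq\; N\,\|A_\sigma x\|_v^{2d} \;\leq\; N\,(\cjsr+\epsilon)^{2d}\,\|x\|_u^{2d} \;\leq\; N\,(\cjsr+\epsilon)^{2d}\,p_u(x),
\]
so that with $\gamub^{2d} = N(\cjsr+\epsilon)^{2d}$ the tuple $(p_v)_{v\in\Nodes}$ satisfies the Lyapunov inequalities, the SOS and strict-positivity constraints of \progref{primal}, and can be rescaled to satisfy the normalization. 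Therefore $\jsrsos \leq N^{1/(2d)}(\cjsr+\epsilon)$, and sending $\epsilon \to 0$ completes the argument.

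The main obstacle is the approximation lemma, which is where the constant $\binom{n+d-1}{d}$ is born. I would prove it by applying John's ellipsoid theorem on the $N$-dimensional space $\R[x]_d$ of homogeneous polynomials of degree $d$, equipped with the sup-norm $|||q|||_* = \sup_{\|x\|\leq 1}|q(x)|$ induced by the given norm. John's theorem provides a Euclidean norm $|||\cdot|||_2$ on $\R[x]_d$, coming from a positive definite inner product with Gram matrix $Q\succ 0$, such that $|||q|||_2 \leq |||q|||_* \leq \sqrt{N}\,|||q|||_2$. Writing $p(x) = \sum_k q_k(x)^2$ for an orthonormal basis $\{q_k\}$ for this inner product gives an SOS polynomial of degree $2d$, and applying the two-sided John bound to the evaluation functional $\ell_x : q \mapsto q(x)$, whose $|||\cdot|||_*$-dual-norm is exactly $\|x\|^d$, sandwiches $p(x) = \sum_k q_k(x)^2$ between $\|x\|^{2d}$ and $N\,\|x\|^{2d}$ as required. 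Passing through John's theorem carefully (and verifying that strict positivity can be ensured by an arbitrarily small perturbation, cf.\ the remark following \progref{primal}) is the one nontrivial piece; the rest is bookkeeping.
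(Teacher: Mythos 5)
The key step that does not hold up is the last one: you verify the pointwise inequality $p_v(A_\sigma x) \leq N(\cjsr+\epsilon)^{2d} p_u(x)$ with $N = {n+d-1 \choose d}$ and then declare the tuple $(p_v)$ feasible for \progref{primal}. But \progref{primal} does not ask for nonnegativity of $\gamub^{2d}p_u(x) - p_v(A_\sigma x)$; it asks for this difference to be \emph{SOS} (that is what makes $\jsrsos$ an SDP-computable quantity, in contrast with \propref{pos}). For $n \geq 3$ and $2d \geq 4$ (except the lone Hilbert case) nonnegative forms need not be SOS, so a pointwise inequality does not produce a feasible point of \progref{primal}, and the claimed bound $\jsrsos \leq N^{1/2d}(\cjsr+\epsilon)$ does not follow from your argument as written. (The paper itself does not prove this statement; it cites it from Philippe et al., so there is no in-paper proof to compare against, but the known proofs do produce genuine SOS certificates.) A secondary, minor point: the converse-Lyapunov norms need the supremum over all path lengths, $\|x\|_v = \sup_{k\ge 0}\max_{s\in\Arcsout_k(v)}\|A_s x\|/(\cjsr+\epsilon)^k$ (plus a small multiple of $\|x\|_2$ to guarantee definiteness when products are singular); a single large $k$ as in \lemref{primalbuild} gives the inequality only with constant $\cjsrk$, not $\cjsr+\epsilon$.

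The good news is that your John-ellipsoid construction can be upgraded to close the gap, provided you argue at the level of Gram matrices rather than pointwise values. Work with the composition operators $T_\sigma : q \mapsto q\circ A_\sigma$ on the $N$-dimensional space of degree-$d$ forms. The multinorm inequality gives the sup-norm bound $\max_{\|x\|_u\le 1}|q(A_\sigma x)| \le (\cjsr+\epsilon)^d \max_{\|y\|_v\le 1}|q(y)|$, and the John sandwiches at nodes $u$ and $v$ then bound the operator norm of $T_\sigma$ between the two John--Euclidean structures by $\sqrt{N}(\cjsr+\epsilon)^d$. Writing $p_v(x) = \sum_k q_{v,k}(x)^2$ with $\{q_{v,k}\}$ orthonormal for the John inner product at $v$, this operator-norm bound is exactly a positive-semidefiniteness statement for the matrix representing $N(\cjsr+\epsilon)^{2d}p_u(x) - p_v(A_\sigma x)$ as a quadratic form in $x^{[d]}$ (after an invertible change of basis), so the difference is SOS by \theoref{sos}, with the same factor $N = {n+d-1 \choose d}$. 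With that replacement your assembly, the strict positivity (which your sandwich already gives), and the rescaling for the normalization constraint do yield $\jsrsos \le N^{1/2d}(\cjsr+\epsilon)$, and letting $\epsilon \to 0$ finishes the proof.
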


The results of \theoref{second}, \theoref{matt} and \eqref{eq:upperbound} are summarized by the following corollary.
\begin{corollary}
  \label{coro:mix}
  Consider \defAGn{} and a positive integer $d$,
  the approximation given by Program~\ref{prog:primal} using homogeneous polynomials of degree $2d$
  satisfies:
  %\begin{multline*}
  \[
    \min\Big\{{n+d-1 \choose d}, \rho(A(\G))\Big\}^{-\frac{1}{2d}} \jsrsos%\\
    \leq \cjsr \leq \jsrsos.
  \]
  %\end{multline*}
  where $A(G)$ is the adjacency matrix of the automaton $\G$.
\end{corollary}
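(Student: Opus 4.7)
The plan is essentially one of assembly: combine the three previously established ingredients. The upper bound $\cjsr \leq \jsrsos$ is already recorded as \eqref{eq:upperbound}, being a direct consequence of \propref{pos} applied to any feasible solution of \progref{primal}. So only the left-hand inequality in the stated sandwich requires any work at all.

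To obtain it, I would observe that two independent upper bounds on $\jsrsos$ in terms of $\cjsr$ are now available. \theoref{second} yields $\jsrsos \leq \rho(A(\G))^{1/(2d)} \cjsr$, while \theoref{matt} yields $\jsrsos \leq \binom{n+d-1}{d}^{1/(2d)} \cjsr$. Since both hold simultaneously and unconditionally on the same object $\jsrsos$, I simply keep whichever is sharper:
\[
  \jsrsos \;\leq\; \min\!\Big\{\binom{n+d-1}{d},\ \rho(A(\G))\Big\}^{1/(2d)} \cjsr .
\]
Dividing through by the (positive) factor on the right and using $\jsrsos > 0$ then produces exactly the claimed inequality $\min\{\binom{n+d-1}{d}, \rho(A(\G))\}^{-1/(2d)} \jsrsos \leq \cjsr$.

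There is no genuine obstacle to overcome: the corollary is a pure amalgamation, and all of the nontrivial content sits inside \theoref{second} and \theoref{matt}. The real interest is conceptual rather than technical. The combinatorial bound of \theoref{second} complements rather than supersedes the dimension-based bound of \theoref{matt}: whichever of $\binom{n+d-1}{d}$ and $\rho(A(\G))$ happens to be smaller governs the guaranteed accuracy of the SOS relaxation. In regimes where the automaton is sparse or close to a cycle, $\rho(A(\G))$ can be much smaller than $\binom{n+d-1}{d}$, so the new automaton-dependent bound dominates and yields a substantially tighter approximation guarantee than the classical dimension-only estimate; in the opposite regime (large, densely connected $\G$), one falls back on \theoref{matt}, and nothing is lost.
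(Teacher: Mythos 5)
Your proposal is correct and matches the paper exactly: the corollary is stated there as a direct summary of \theoref{second}, \theoref{matt} and \eqref{eq:upperbound}, with the left inequality obtained by taking the sharper of the two upper bounds on $\jsrsos$ and dividing. Nothing further is needed.
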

We see that we can have arbitrary accuracy by increasing $d$.
%Another way to get arbitrary accuracy is the ``$T$-Product lift'', see \cite[\Sect3.1]{philippe2016stability}.

For the arbitrary switching case, $\rho(A(\G))$ is equal to the number of matrices $m$.
Theorem~\ref{theo:second} was already known in this particular case \cite[Theorem~4.3]{parrilo2008approximation}.
%This section can thus be seen as a generalization of \cite[\Sect4]{parrilo2008approximation}.
%The remaining of this section is dedicated to the proof of Theorem~\ref{theo:second}
%which is inspired from \cite[\Sect4]{parrilo2008approximation}.

Our proof technique relies on the analysis of an iteration in the vector space of polynomials of degree $2d$.
When this iteration converges, it converges to a feasible solution of Program~\ref{prog:primal}.
By analysing this iteration as affine iterations in this vector space, we derive a sufficient condition for its convergence
and thus an upper bound for $\jsrsos$.

%We now give another bound on the quality of the approximation that only depends on $|\Arcs|$.
Consider the iteration
\begin{align}
  \notag
  p_{v,0}(x) & = 0,\\
  \label{eq:iter}
  p_{v,k+1}(x) & = q_v(x) + \frac{1}{\iterc} %\sum_{u \in \Nodes}
  \sum_{(u,v,\sigma) \in \Arcs} p_{u,k}(A_\sigma x), \quad  v \in \Nodes
\end{align}
for fixed homogeneous polynomials $q_v(x)$ of degree $2d$ in $n$ variables (not necessarily different)
and a constant $\iterc > 0$.

When this iteration converges, it converges to a feasible solution of Program~\ref{prog:primal}.
\begin{lemma}
  \label{lem:conv}
  Consider a constant $\iterc > 0$.
  If there exist homogeneous polynomials $q_v(x)$ in the interior of the SOS cone such that
  iteration~\eqref{eq:iter} converges then
  \[
    \jsrsos \leq \iterc^{\frac{1}{2d}}.
  \]
  \begin{proof}
    Suppose the iteration converges to the polynomials $p_{v,\infty}(x)$.
    It is easy to show by induction that $p_{v,k}(x)$ is SOS for all $k$.
    It is trivial for $k = 0$ and if it is true for $k$ then it is also true for $k+1$ by \eqref{eq:iter}.
    Since the SOS cone is closed, $p_{v,\infty}$ is SOS.
    Now by \eqref{eq:iter}, for each $v \in \Nodes$,
    \[ p_{v,\infty}(x) = q_v(x) + \frac{1}{\iterc} %\sum_{u \in \Nodes}
    \sum_{(u,v,\sigma) \in \Arcs} p_{u,\infty}(A_\sigma x) \]
    so $p_{v,\infty}(x)$ is also in the interior of the SOS cone.
    For each edge $(u,v,\sigma)$, by manipulating the above equation, we have
    \[ \iterc p_{v,\infty}(x) - p_{u,\infty}(A_\sigma x) = \iterc q_v(x) + %\sum_{u \in \Nodes}
    \sum_{(u',v',\sigma') \in \Arcs} p_{u',\infty}(A_{\sigma'} x) \]
    so $\iterc p_{v,\infty}(x) - p_{u,\infty}(A_\sigma x)$ is SOS.
    Therefore $(\{\, p_{v,\infty}(x) : v \in \Nodes \,\}, \iterc^{\frac{1}{2d}})$ is a feasible solution of Program~\ref{prog:primal}.
  \end{proof}
\end{lemma}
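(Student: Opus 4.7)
The plan is to show that when the iteration converges, its limit $p_{v,\infty}$ together with $\gamub = \iterc^{1/(2d)}$ forms a feasible solution of \progref{primal}, from which the bound on $\jsrsos$ follows by definition. There are really three things to check: that $p_{v,\infty}$ is SOS, that $p_{v,\infty}$ is strictly positive, and that the Lyapunov inequalities hold along every edge in SOS form.

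First I would verify by induction on $k$ that every iterate $p_{v,k}$ lies in $\Sosh$. The base case $p_{v,0}=0$ is trivial. For the inductive step, $q_v$ is SOS by hypothesis, the pullback $p_{u,k}(A_\sigma x)$ of an SOS polynomial by a linear map is SOS (if $p=\sum_i r_i^2$ then $p\circ A_\sigma=\sum_i(r_i\circ A_\sigma)^2$), and the SOS cone is closed under nonnegative linear combinations. Because the SOS cone is closed in the finite-dimensional space $\Rh[x]$, the limit $p_{v,\infty}$ is SOS as well. Passing to the limit in \eqref{eq:iter} yields the fixed-point identity
\[
  p_{v,\infty}(x) = q_v(x) + \frac{1}{\iterc}\sum_{(u,v,\sigma)\in\Arcs} p_{u,\infty}(A_\sigma x),
\]
so $p_{v,\infty}$ differs from $q_v$ by an SOS term and therefore inherits from $q_v$ the property of lying in the interior of the SOS cone; in particular $p_{v,\infty}$ is strictly positive.

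To produce the Lyapunov constraint associated with a fixed edge $(u,v,\sigma)\in\Arcs$, I would multiply the fixed-point identity by $\iterc$ and peel off the single term $p_{u,\infty}(A_\sigma x)$ from the sum, obtaining
\[
  \iterc\, p_{v,\infty}(x) - p_{u,\infty}(A_\sigma x) = \iterc\, q_v(x) + \sum_{(u',v,\sigma')\in\Arcs\setminus\{(u,v,\sigma)\}} p_{u',\infty}(A_{\sigma'} x),
\]
whose right-hand side is a sum of SOS polynomials and is therefore SOS. This is exactly the edge constraint of \progref{primal} with $\gamub^{2d}=\iterc$. After rescaling the $p_{v,\infty}$ by a single positive constant to satisfy the normalization condition (which preserves SOS membership and the Lyapunov inequality), we obtain a feasible solution and conclude $\jsrsos\leq \iterc^{1/(2d)}$.

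The main obstacle is really just identifying the right rearrangement that isolates a single Lyapunov inequality per edge while keeping the remainder manifestly SOS; everything else is standard, relying on the closedness of the SOS cone, its invariance under linear pullback, and the role of the interior assumption on $q_v$ to guarantee strict positivity of the limit.
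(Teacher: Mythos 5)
Your proof is correct and follows essentially the same route as the paper: induction to show each iterate is SOS, closedness of the SOS cone to pass to the limit, the fixed-point identity to get interior membership and the per-edge Lyapunov inequality, and feasibility of \progref{primal} with $\gamub^{2d}=\iterc$. Your rearranged identity (summing over the remaining edges into $v$ after peeling off the single term) is in fact the correct version of the slightly garbled display in the paper's proof, and the normalization remark is a harmless addition.
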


In view of \lemref{conv}, it is thus natural to analyse
under which condition iteration~\ref{eq:iter} converges. Recall
that iteration~\ref{eq:iter} is an affine map on the vector space of
homogeneous polynomials of degree $2d$.

\begin{proof}[Proof of Theorem~\ref{theo:second}]
  Iteration~\ref{eq:iter} is an affine map on the vector space of
  homogeneous polynomials of degree $2d$.  It is well known that
  if the convergence is guaranteed when we only retain the linear
  part of the affine map then it is also guaranteed for the affine
  iteration.

  Therefore we can analyse instead the following iteration
  \begin{align*}
    p_{v,0}(x) & = q_v(x),\\
    p_{v,k+1}(x) & = \frac{1}{\iterc} %\sum_{u \in \Nodes}
    \sum_{(u,v,\sigma) \in \Arcs} p_{u,k}(A_\sigma x), \quad  v \in \Nodes
  \end{align*}

  We can see that
  \begin{equation}
    p_{v,k+1}(x) = \frac{1}{\iterc^k}\sum_{s \in \Arcsin_k(v)} q_{s(1)}(A_s x)\\
    \leq \frac{1}{\iterc^k}\sum_{s \in \Arcsin_k(v)} q_{s(1)}(A_s x).
  \end{equation}

  Consider a norm $\|\cdot\|$ of $\R^n$ and its corresponding induced matrix norm of $\R^{n \times n}$.
  For each $v \in \Nodes$, we know by continuity of $q_v(x)$ that there exist $\beta_v > 0$ such that
  \[ q_v(x) \leq \beta_v\|x\|^{2d} \]
  for all $x \in \R^n$.
  Let $\beta = \max_{v \in \Nodes} \beta_v$.

  Therefore,
  \begin{align*}
    p_{v,k+1}(x)
    & \leq \frac{1}{\iterc^k}\sum_{s \in \Arcsin_k(v)} \beta_v \|A_s\|^{2d} \|x\|^{2d}\\
    & \leq \frac{\beta}{\iterc^k} \|x\|^{2d} \sum_{s \in \Arcsin_k(v)} \|A_s\|^{2d}.
  \end{align*}
  By Lemma~\ref{lem:rhoAG} and Remark~\ref{rem:path}, if
  $\iterc > \rho(A(G))\cdr$, $\lim_{k \to \infty} p_{v,k}(x) = 0$.
  We obtain the result by \lemref{conv}.
\end{proof}

\section{Finding high-growth sequences}
\label{sec:dualalgo}
In \secref{sos}, we introduced the SOS restriction of \progref{primalinf} with \progref{primal}.
In this section, we introduce \progref{dual}, the moment relaxation of \progref{dualinf}.
It turns out that \progref{primal} and \progref{dual} are dual to each other.
Indeed, the proof of \lemref{duality} can be translated verbatim in order to prove that \progref{dual} is the dual of \progref{primal}.
%Indeed, the proof of \theoref{duality} works as is with these programs so the Theorem holds for both the pair \progref{primalinf}/\progref{dualinf} and the pair \progref{primal}/\progref{dual}.

% \label{sec:dualalgo}
% In this section, we introduce \progref{dual}, the dual of \progref{primal},
% and use it to give an alternative proof of \theoref{second}.
% \theoref{second} %, \lemref{incp}
% and \eqref{eq:upperbound} give
% \begin{equation*}
%   \cdr \leq \cjsr \leq \jsrsos \leq \rho(A(\G))\cdr
% \end{equation*}
% which is equivalent to
% \begin{equation*}
%   \rho(A(\G))^{-1} \jsrsos \leq \cdr \leq \cjsr \leq \jsrsos.
% \end{equation*}
%
% This alternative proof shows that a solution of \progref{dual} with $\gamlb$
% provides a certificate for the lower bound $\rho(A(\G))^{-1} \gamlb$ to $\cdr$.
% This shows the inequality
% \[ \rho(A(\G))^{-1} \jsrsos \leq \cdr. \]
%
% %\todo[inline]{Also give a dual proof for the $\sqrt{n}$ bound. Maybe using John's theorem (or its generalization: Lewis' theorem \cite{tomczak1989banach}).}
%
% It turns out that we can extract high growth sequence from this certificate.
% We provide \algoref{prodl} parametrized by some integer parameter $l$
% that produces an high growth sequence using the solution of \progref{dual} with some $\gamlb < \jsrsos$.
% Let $g_{2d,l}$ be the asymptotic growth rate of the sequence produced.
% We show the following guarantee in \theoref{fourth}:
% \begin{equation*}
%   [\mdin_l]^{-1} \gamlb \leq g_{2d,l}
% \end{equation*}
% where $\mdin_l$ is defined in the \lon{}. % and $\gamlb$ is in the solution of \progref{dual}.

\subsection{Dual SOS program}
\label{sec:dual}
\begin{myprog}[Dual of \progref{primal}]
  \label{prog:dual}
  \begin{align}
    \notag
    \sup_{\pmu_{uv\sigma} \in \Rhd, \gamlb \in \R} \gamlb\\
    \label{eq:dual1}
    \sum_{(u,v,\sigma) \in \Arcs} \pushf{A_\sigma}{\pmu_{uv\sigma}} - \gamlb^{2d} \sum_{(v,w,\sigma) \in \Arcs} \pmu_{vw\sigma} & \in \Soshd, \forall v \in \Nodes,\\
    \label{eq:dual2}
    \pmu_{uv\sigma} & \in \Soshd, \quad \forall (u,v,\sigma) \in \Arcs,\\
    \label{eq:dual3}
    \sum_{(u,v,\sigma) \in \Arcs} \pmu_{uv\sigma}(\Sn) & = 1.
    % FIXME the normalization equation (12) should probably be  E...[ (sum_i xi^2)^d ]  rather than  (sum_i x_i^2d).
    %       In a certain sense, it does not matter (since the purpose is to make the problem affine rather than homogeneous).
    %       However, this form is more symmetric, since now things are orthogonally invariant.
    %       It affects the numerical Example but we should do it later for the journal version and/or final submission.
  \end{align}
\end{myprog}

%\todo{In case of irreducibility (as defined in \cite{philippe2016stability}), we can put $\succ$ instead of $\succeq$ in \eqref{eq:dualconsexp1}, should we add that ?}

% Note that the dual constraint \eqref{eq:dualconsexp1} is equivalent to:
% \begin{equation}
%   \label{eq:dualconsexp}
%   \sum_{(u,v,\sigma) \in \Arcs} \pE_{uv\sigma}[p(A_\sigma x)] \geq \gamma^{2d} \sum_{(v,w,\sigma) \in \Arcs} \pE_{vw\sigma}[p(x)]% \setminus \{0\}.
% \end{equation}
% for all $v \in \Nodes$ and for all $p(x) \in \Sigma_{\mathbf{2d}}$.

% \begin{myrem}
%   In the case of irreducibility, using Lemma~\ref{lem:strict}, one can derive a dual
%   where the inequality \eqref{eq:dualconsexp1} (or \eqref{eq:dualconsexp}) is strict. % FIXME strictly positive vs inte(\Sosh)
% \end{myrem}

It is important to note that a solution of \progref{dual} is not necessarily a solution of \progref{dualinf}.
First $\pmu_{uv\sigma}$ may not be a measure even if it belongs to $\Soshd$ as discussed in \secref{sosprog}.
Second, the left-hand side of \eqref{eq:dual1} may also not be a measure.
For this second concern, it helps to be more explicit.
Suppose for instance that we are in the quadratic case, i.e. $d = 1$.
In that case, if $\pmu \in \Soshd[2]$, there always exists a measure $\mu$ that has the moments of the pseudo-measure $\pmu$.
We can take for instance a Gaussian distribution with these second order moments.
Hence we can find Gaussian distributions $\mu_{uv\sigma}$ that have the second order moments $\pmu_{uv\sigma}$
and Gaussian distributions $\nu_v$ that have the second order moments given by the left-hand side of \eqref{eq:dual1}.
However, we may have
\[ \sum_{(u,v,\sigma) \in \Arcs} \pushf{A_\sigma}{\mu_{uv\sigma}} - \gamlb^{2d} \sum_{(v,w,\sigma) \in \Arcs} \mu_{vw\sigma} \neq \nu_v \]
as we only know that the left-hand side and right-hand side of the above equation have the same second order moments;
see \exemref{run3}.
%This comes from the fact that $\Soshd$ is included in the set of moments of mom
%Even in the quadratic case, i.e. $d=1$, for which \eqref{eq:dualconsexp2} is sufficient for $\pmu_{uv\sigma}$ to be a measure,

However, in some cases, we can recover a feasible solution of \progref{dualinf} from a feasible solution of \progref{dual}.
In these cases, by \cororef{instabcert}, this provides a lower bound to the CJSR.
%An important case is when the measures are atomic.
Moreover, there exist efficient techniques allowing to detect situations where the solution is moments of an atomic measure; see \cite{henrion2005detecting, laurent2009sums}.
%Moreover,
%we can efficiently detect when the measures are atomic and recover their atoms \cite{henrion2005detecting, laurent2009sums}.
Then, using the transformation of \propref{Bsc}, we can transform
these atomic measures into a feasible solution of a constrained scalar switched systems.
For such system, we could use the algorithm described in \lemref{instabcerts} but as pointed out in \cite{ahmadi2012joint},
computing the CJSR of a scalar system can easily be done by solving a maximum cycle mean problem for which efficient algorithm exists \cite{karp1978characterization}.

If we recover a feasible solution of \progref{dualinf} from a feasible solution of \progref{dual} with $\gamlb = \jsrsos$,
we can directly conclude that $\jsrsos = \cjsr$.
This is somewhat similar to the minimization of a multivariate polynomial using SOS where we can detect that we have reached the optimum
when the measure is atomic and recover the minimizers of the polynomial from the atoms of the measure. % mets une phrase moins angoissante que 'somewhat similar', et surtout mets une référence pour le lecteur intéressé (référence précise hein)

However, we may also check for atomic feasible solutions of \progref{dualinf} with $\gamlb < \jsrsos$ to provide lower bounds.
Moreover, in practice, $\jsrsos$ is computed by binary search on $\gamlb$ so we often have several such solutions.

\begin{myexem}
  \label{exem:simple2}
  Consider \exemref{simple1}.
  For $i = 1, 2, 3$, let $\pmu_i$ be the solution of \progref{dual} corresponding to the matrix $A_i$.
  For any $d$, we can see that the dual solution for $\gamma = 1$ is such that
  the only monomial $x^\alpha$ such that $\la\pmu_1, x^\alpha \ra$ (resp. $\la \pmu_2, x^\alpha \ra$, $\la \pmu_3, x^\alpha \ra$) is non-zero is $x_1^{2d}$ (resp. $x_2^{2d}$, $x_3^{2d}$)
  and $\la \pmu_1, x_1^{2d} \ra = \la \pmu_2, x_2^{2d} \ra = \la \pmu_3, x_3^{2d} \ra = 1/3$.
  Note that it means that $\pmu_1 = \delta_{(1,0,0)}/3$, $\pmu_2 = \delta_{(0,1,0)}/3$ and $\pmu_3 = \delta_{(0,0,1)}/3$
  where $\delta_x$ is the Dirac measure centered on $x$.
  %We remark that $\mu_1 = \delta_{(1,0,0)}/3$, $\mu_2 = \delta_{(0,1,0)}/3$ and $\mu_3 = \delta_{(0,0,1)}/3$ is solution of \progref{dualinf} with $\gamlb = 1$.
  Since these measures are solution to \progref{dualinf} with $\gamlb = 1$,
  by \cororef{instabcert}, this means that $\jsr \geq 1$.
\end{myexem}

\begin{myexem}
  \label{exem:run3}
  We continue the running example; see Example~\ref{exem:run1} and Example~\ref{exem:run2}.

  For all $d$, $\pmu_{212} = \pmu_{323} = \pmu_{344} = \pmu_{431} = 0$
  hence the node 4 is ``unused'' by the dual.
  For $2d = 2, 4, 6, 8$, $\pmu_{123} = \pmu_{231} = 0$ so
  the node $2$ is ``unused'' for low degree.

  At first, one could think that the dual variables can be used to reduce the systems,
  e.g. remove nodes or edges.
  However, as we will see, it would be a mistake to remove the node 2.

  For $2d = 10$, $\pmu_{123}$ and $\pmu_{231}$ are not zero and are of the ``same order or magnitude'' than $\pmu_{131}$ and $\pmu_{312}$.
  Then for $2d = 12$, $\pmu_{123}$ and $\pmu_{231}$ have ``larger magnitude'' than $\pmu_{131}$ and $\pmu_{312}$.
  This observation will be useful for Example~\ref{exem:run4}.

  We can see that while the shape of the primal variables changes a lot between $2d = 2$ and $2d = 4$ as mentioned in Example~\ref{exem:run2},
  the ``important'' change for the dual variables happens around $2d = 10$.

  It is also interesting to notice that the matrices corresponding to the dual variables have low rank.
  For example, for $2d = 2$, $\pmu_{131}$
  %(resp. $\pE_{312}$, $\pE_{331}$) is the Dirac measure $\delta_{(0.917,0.399)}$
  %(resp. $\delta_{(0.875,0.485)}$, $\delta_{(0.757,-0.653)}$).
  (resp. $\pmu_{312}$, $\pmu_{331}$) is the Dirac measure $0.324 \cdot \delta_{(0.917,0.399)}$
  (resp. $0.229 \cdot \delta_{(0.875,0.485)}$, $0.447 \cdot \delta_{(0.757,-0.653)}$).
  However, this is not a feasible solution of \progref{dualinf}.
  Indeed,
  while \eqref{eq:dualinf1} is satisfied for \node{} 1 since
  $\pushf{A_3}{\delta_{(0.875,0.485)}}$ gives $\delta_{(0.917,0.399)}$,
  $\pushf{A_1}{\delta_{(0.917,0.399)}}$ gives $\delta_{(1, -0.0273)}$ and 
  $\pushf{A_1}{\delta_{(0.757,-0.653)}}$ gives $\delta_{(0.423, -0.906)}$ so \eqref{eq:dualinf1} is not satisfied for \node{} 3.
\end{myexem}

\subsection{Generating high growth sequence}
\label{sec:algo}
In this section we give an algorithm that generates an infinite
sequence of matrices such that the asymptotic growth rate of the
product of the matrices is arbitrarily close to the CJSR.
Note that by \defref{cjsr}, this asymptotic growth rate must be
smaller than the CJSR.

%Suppose that we are given a polynomial $p_0(x) \in \inte(\Sosh)$,
%Since it is in the interior, $\pE_{uv\sigma}[p_0(x)] > 0$ for all $(u,v,\sigma) \in \Arcs$ such that $\pE_{uv\sigma}$ is not zero.
%Since the inequality \eqref{eq:dualconsexp} is strict, at least one pseudo-expectation $\pE_{v_1v_0\sigma_0}$ must be nonzero.

Given an \arc{} $e \in \Arcs$, let $\pE_e[p(x)] = \la \pmu_e, p(x) \ra$.
Given a polynomial $p_0(x) \in \inte(\Sosh)$ and an initial edge $(v_1,v_0,\sigma_1)$,
the algorithm builds a $\G^\Tr$-admissible
sequence $(v_1,v_0,\sigma_1),\allowbreak(v_2,v_1,\sigma_2), \ldots$ such that
\begin{equation}
	\label{eq:thetak}
  \theta_k \eqdef \pE_{v_{k}v_{k-1}\sigma_k}[p_0(A_{\sigma_1} \cdots A_{\sigma_k}x)]
\end{equation}
remains ``large'' for increasing $k$.
As we will see, using \lemref{thetatoA},
this implies that $A_{\sigma_1} \cdots A_{\sigma_k}$ has a ``large'' norm.

\begin{lemma}[{\cite[Lemma~6]{legat2016generating}}]
  \label{lem:sosbound}
  For any polynomial $p(x) \in \inte(\Sosh)$,
  there exists a constant $\beta$
  such that for any matrix $A$,
  \[ \beta\|A\|_2^{2d} p(x) - p(Ax) \quad \text{is SOS} \]
  where $\|A\|_2 = \rho(A^\Tr A)^{1/2}$ is the Euclidean norm.
  %Moreover we can choose $\beta = \kappa(Q)$ where $Q \in \mathcal{S}^{{n+d \choose d}}$
  %is the symmetric matrix such that $p(x) = (x^{[d]})^\Tr Qx^{[d]}$ for all $x \in \R^n$ % FIXME basis free approach ?
  %and $\kappa(Q) = \rho(Q) \rho(Q^{-1})$ is the condition number of $Q$.
% \begin{proof}
%   Consider the matrix $Q$ defined in the statement of the lemma.
%   Note that since $p(x) \in \inte(\Sosh)$, $Q$ is positive definite.
%   We can see that
%   \[ ((Ax)^{[d]})^\Tr Q(Ax)^{[d]} = (\ld{x})^\Tr (A^{[d]})^\Tr Q\ld{A}\ld{x}. \]
%   Moreover, for all $x \in \R^n$,
%   \[ (\ld{x})^\Tr Q\ld{x} \geq \|\ld{x}\|_2/\rho(Q^{-1}) \]
%   and by \lemref{svd},
%   \[ (\ld{x})^\Tr (A^{[d]})^\Tr Q\ld{A}\ld{x} \leq \rho(Q)\rho((\ld{A})^\Tr \ld{A})\|\ld{x}\|_2. \]
%   Using the fact that $\rho(A^{[d]}) = \rho(A)^d$ and $(AB)^{[d]} = A^{[d]}B^{[d]}$ for any matrix $A$ and $B$,
%   we have
%   \[ \kappa(Q) \rho(A^\Tr A)^{d} Q - (A^{[d]})^\Tr QA^{[d]} \succeq 0. \]
% \end{proof}
\end{lemma}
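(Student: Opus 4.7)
The plan is to reduce the statement to a semidefinite inequality on Gram matrices via \theoref{sos}, and then use the fact that $p \in \inte(\Sosh)$ to guarantee a positive definite Gram matrix with which we can control quotients.

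First, I would fix a basis $x^{[d]}$ of the homogeneous polynomials of degree $d$ and apply \theoref{sos} to write $p(x) = (x^{[d]})^\Tr Q\, x^{[d]}$ with $Q \succeq 0$. Because $p$ lies in the \emph{interior} of the SOS cone, $Q$ can be taken to be strictly positive definite (any boundary direction of $\Sosh$ would contradict the existence of an open neighbourhood of $p$ inside $\Sosh$); equivalently, there exists $\epsilon > 0$ with $Q \succeq \epsilon I$. Next, the substitution $x \mapsto A x$ induces a linear endomorphism of the degree-$d$ homogeneous polynomials: there is a matrix $M_A$ such that $(Ax)^{[d]} = M_A\, x^{[d]}$, so that $p(Ax) = (x^{[d]})^\Tr M_A^\Tr Q M_A\, x^{[d]}$. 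In the standard basis of symmetric tensors, $M_A$ is the $d$-th symmetric power of $A$ and its spectral norm is exactly $\|A\|_2^d$, since the singular values of $A^{\otimes d}$ are products of singular values of $A$ and the extremal singular vector $v^{\otimes d}$ lies in the symmetric subspace; in any other fixed basis, equivalence of norms on a finite-dimensional space yields $\|M_A\|_2 \leq C \|A\|_2^d$ for a constant $C$ depending only on the basis.

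With these ingredients in hand, the polynomial
\[
\beta \|A\|_2^{2d} p(x) - p(Ax) = (x^{[d]})^\Tr \bigl(\beta \|A\|_2^{2d} Q - M_A^\Tr Q M_A\bigr) x^{[d]}
\]
is SOS by \theoref{sos} as soon as the bracketed matrix is positive semidefinite. I would establish this by the chain
\[
M_A^\Tr Q M_A \preceq \lambda_{\max}(Q)\, M_A^\Tr M_A \preceq \lambda_{\max}(Q)\, C^2 \|A\|_2^{2d} I \preceq \frac{C^2 \lambda_{\max}(Q)}{\lambda_{\min}(Q)} \|A\|_2^{2d} Q,
\]
which shows that the choice $\beta = C^2 \kappa(Q)$, with $\kappa(Q) = \lambda_{\max}(Q)/\lambda_{\min}(Q)$ the condition number of $Q$, works. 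Crucially, $\beta$ depends on $p$ (through $Q$) and on the fixed choice of basis, but not on $A$, which is exactly what the lemma requires.

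The main obstacle, and essentially the only nontrivial ingredient, is the bound $\|M_A\|_2 \leq C \|A\|_2^d$; everything else is linear algebra plus the Gram-matrix description of SOS. Since $M_A$ is defined by polynomial substitution, its entries are homogeneous of degree $d$ in the entries of $A$, so the bound follows abstractly from homogeneity and compactness of the set $\{A : \|A\|_2 = 1\}$. Working in the orthonormal symmetric-tensor basis is the cleanest route since it gives $C = 1$ and the sharpest constant, but any basis is sufficient for the qualitative statement of the lemma.
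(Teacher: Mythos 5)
Your argument is correct and self-contained. Note that the paper itself gives no proof of this lemma --- it is stated with a citation to Lemma~6 of the preliminary conference version \cite{legat2016generating} --- so there is no in-paper argument to compare against line by line. Your route works entirely at the Gram-matrix level: the key facts you use, namely that $p\in\inte(\Sosh)$ admits a \emph{positive definite} Gram matrix $Q$ (this is the image-of-the-relative-interior fact for the surjective Gram map; your parenthetical justification is informal but the fact is standard) and that the substitution matrix $M_A$ satisfies $\|M_A\|_2\le C\|A\|_2^d$ by degree-$d$ homogeneity in the entries of $A$ plus compactness of the unit sphere, are both sound, and the chain $M_A^\Tr Q M_A\preceq C^2\kappa(Q)\|A\|_2^{2d}Q$ then yields $\beta=C^2\kappa(Q)$ independent of $A$. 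The cited reference argues instead at the polynomial level: interiority gives $\epsilon>0$ and $\beta'$ with $p(x)-\epsilon\|x\|_2^{2d}$ and $\beta'\|x\|_2^{2d}-p(x)$ both SOS, and since $\|A\|_2^{2d}\|x\|_2^{2d}-\|Ax\|_2^{2d}$ is SOS (factor $a^d-b^d$ and use closure of SOS under products and substitution), one chains $p(Ax)\preceq\beta'\|Ax\|_2^{2d}\preceq\beta'\|A\|_2^{2d}\|x\|_2^{2d}\preceq(\beta'/\epsilon)\|A\|_2^{2d}p(x)$ in the SOS order. The two proofs are morally the same sandwiching of $p$ between multiples of $\|x\|_2^{2d}$; yours makes the constant explicit as a condition number and, in the orthonormal symmetric-tensor basis, gives the sharp $C=1$, while the polynomial-level version avoids any discussion of Gram representations of interior points.
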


\begin{lemma}
  \label{lem:thetatoA}
  Let us consider a solution $(\, \pmu_e : e \in \Arcs \,)$ of \progref{dual}.
  For any polynomial $p(x) \in \inte(\Sosh)$,
  %and any pseudo-expectations $\pE_e \in \Soshd$ for each $e \in \Arcs$,
  there exists a positive constant $\tau$
  such that for any matrix $A \in \R^{n \times n}$ and \arc{} $e \in \Arcs$,
  \[ \pE_e[p(A x)] \leq \tau \|A\|_2^{2d} \]
  \begin{proof}
    If all pseudo-expectations are zero, the result is trivially true.
    Therefore we can suppose that at least one is nonzero.
    By \lemref{sosbound}, there exists a constant $\beta > 0$ such that
    \[ \beta \|A\|_2^{2d}p(x) - p(Ax) \text{ is SOS}. \]
    Hence
    \[
      \pE_e[p(Ax)] \leq \beta \|A\|_2^{2d} \pE_e[p(x)].
    \]
    We obtain the result with the constant
    \(
    \tau = \beta \max_{e \in \Arcs} \pE_{e}[p(x)].
    \)
    Since at least one pseudo-expectation is nonzero and $p(x)$ is in
    the interior of the SOS cone, $\tau > 0$.
  \end{proof}
\end{lemma}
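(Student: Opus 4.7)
The plan is to exploit two things: that $\pE_e = \langle \pmu_e, \cdot \rangle$ is a pseudo-expectation, meaning it is nonnegative on the SOS cone $\Sosh$ (because $\pmu_e \in \Soshd$ by constraint \eqref{eq:dual2}), and that $p$ lies in the \emph{interior} of the SOS cone, so that it dominates every fixed SOS form up to a constant.

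\textbf{Step 1 (uniform SOS domination of $p(Ax)$ by $p(x)$).} I would begin by invoking \lemref{sosbound}: since $p \in \inte(\Sosh)$, there exists a constant $\beta > 0$ such that for every $A \in \R^{n \times n}$, the polynomial
\[ \beta \|A\|_2^{2d}\, p(x) - p(Ax) \]
is SOS. Crucially, $\beta$ depends only on $p$, not on $A$.

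\textbf{Step 2 (apply the pseudo-expectation).} Since $\pmu_e \in \Soshd$, pairing $\pmu_e$ against an SOS polynomial gives a nonnegative number. Applied to the SOS form from Step~1, this yields
\[ \pE_e[p(Ax)] \leq \beta \|A\|_2^{2d}\, \pE_e[p(x)]. \]
Setting $\tau := \beta \max_{e \in \Arcs} \pE_e[p(x)]$, which is a maximum over a finite set and hence well-defined, gives the claimed inequality $\pE_e[p(Ax)] \leq \tau \|A\|_2^{2d}$ uniformly in $A$ and $e$.

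\textbf{Step 3 (strict positivity of $\tau$).} This is the step I expect to require the most care, as it is where the interior hypothesis on $p$ is really used. By the normalization \eqref{eq:dual3}, at least one pseudo-measure $\pmu_{e^*}$ is nonzero. Because $p$ lies in the interior of $\Sosh$, for any sufficiently small $\epsilon > 0$ the polynomial $p(x) - \epsilon q(x)$ is still SOS for every fixed SOS form $q$; choosing a $q$ that pairs nontrivially with $\pmu_{e^*}$ then forces $\pE_{e^*}[p(x)] > 0$, hence $\tau > 0$. (Concretely one may take $q(x) = \|x\|_2^{2d}$, whose pairing with $\pmu_{e^*}$ corresponds to $\pmu_{e^*}(\Sn)$, known to be nonzero.)

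The only real obstacle is Step~3, and only in making sure the interior hypothesis is exploited cleanly --- once that strict positivity is in hand, Steps~1 and~2 are essentially mechanical applications of \lemref{sosbound} and the defining property of elements of $\Soshd$.
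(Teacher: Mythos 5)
Your proposal is correct and follows essentially the same route as the paper's proof: invoke \lemref{sosbound} to get the uniform SOS domination, pair with $\pmu_e \in \Soshd$, and set $\tau = \beta \max_{e} \pE_e[p(x)]$. Your Step~3 merely spells out in more detail the positivity argument that the paper asserts in one line (that a nonzero element of $\Soshd$ pairs strictly positively with any interior point of $\Sosh$), which is a welcome but not essentially different elaboration.
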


%   The algorithm works similarly to the Markov Chain of the previous
%   section. In the previous section, at each step, we choose a incoming
%   path of length $l$ uniformly at random. In this section, we will try
%   to make smarter choice using the dual variables.
%   %To choose which incoming path of length $l$ to append to our sequence,
%   %we would like to compare them.
%   However the dual of the SOS cone only provides a partial order between
%   pseudo-expectations.  To extend this partial order to a full order, we
%   evaluate the pseudo-expectations at some fixed polynomial
%   $p_0(x) \in \inte(\Sosh)$.%, i.e. in the interior of the cone of SOS
%   %homogeneous polynomials of degree $2d$.
%
%   By \eqref{eq:dual3}, at least one dual variable is nonzero and
%   we can choose one of such \arcs{}, say $(v_0,v_{-1},\sigma_0)$.  We then
%   append paths of length $l$ one after the other to build a
%   $\G^\Tr$-admissible sequence.  To choose which path to add, we use
%   the partial order provided by the dual variables and $p_0(x)$ to give
%   the probability for each \arc{} to be taken.  This is described in
%   \algoref{prodl}.

\begin{algorithm}[!ht]
  \caption{Generates a sequence of large asymptotic growth using paths of length $l$.}
  \label{algo:prodl}
  \begin{algorithmic}
    \STATE{Given a feasible solution $(\, \pmu_e : e \in \Arcs \,)$ of \progref{dual}}
    \STATE{Pick an arbitrary polynomial $p_0(x) \in \inte(\Sosh)$}
    \STATE{Pick an \arc{} $(v_0,v_{-1},\sigma_0) \in \Arcs$ such that $\pE_{v_0v_{-1}\sigma_0}[p_0(x)] > 0$}
    \FOR{$k = 0,l,2l,\ldots$}
    \STATE Pick $s \in \argmax_{s \in \Arcsin_l(v_{k})} \pE_{s[1]}[p_k(A_s x)]$
    \STATE Set $(v_{k+l},\sigma_{k+l},\ldots,\sigma_{k+1},v_k) \leftarrow s$
    \STATE Set $p_{k+1} \leftarrow p_k(A_s x)$
    \ENDFOR
  \end{algorithmic}
\end{algorithm}

%   Under some condition we still have the guarantee given by
%   Lemma~\ref{lem:maxdegpathr} and under further conditions, it is
%   satisfied not only expectation but with probability 1.
%   We define the random variable
%   \[
%     \Theta_k \eqdef \pE_{S_k[1]}[p_0(A_{S_k}x)].
%   \]
%   Note that $\Theta_k = \pE_k[p_0(x)]$ where $\pE_k$ is the random
%   variable defined \eqref{eq:pEk}.

%We now show how to use the dual variables to provide a lower bound
%on the $2d$ moment of the random variable $\|A_{S_k}\|^{2d}$,
%that is, $\cdr^{2d}$.
%The dual constraint \eqref{eq:dual1} implies the following Lemma.

\lemref{maxdegpathl} provides a guarantee on the growth rate of $\theta_k$, defined in \eqref{eq:thetak},
using the dual constraint~\eqref{eq:dual1}.

\begin{lemma}
  \label{lem:dualconsl}
  Given \defAGe{},
  if $\pmu$ is a feasible solution of \progref{dual} then,
  for any edge $(u,v,\sigma) \in \Arcs$,
  the following holds:
  \begin{equation}
    \label{eq:dualconsl}
    \sum_{s \in \Arcsin_k(u)} \pushf{A_s}{\pmu_{s[1]}} \succeq \gamma^{2dk} \pmu_{uv\sigma}.
  \end{equation}
  \begin{proof}
    We prove \eqref{eq:dualconsl} by induction, the case of $k=0$ being trivial.
    We can rewrite the left-hand side of \eqref{eq:dualconsl} as
    \begin{equation}
      \label{eq:splitdualconsl}
      \sum_{s \in \Arcsin_k(u)} \pushf{\A_s}{\pmu_{s[1]}} = \sum_{s \in \Arcsin_{k-1}} \pushf{A_s}{\sum_{(u,s(1),\sigma) \in \Arcs} \pushf{A_\sigma}{\pmu_{us(1)\sigma}}}.
    \end{equation}
    By \eqref{eq:dual1},
    \[
      \sum_{(u,s(1),\sigma) \in \Arcs} \pushf{A_{\sigma}}{\pmu_{us(1)\sigma}}
      \succeq \gamma^{2d} \sum_{(s(1),w,\sigma) \in \Arcs} \pmu_{s(1)w\sigma}.
    \]
    Since the dual variables $\pmu_{s(1)w\sigma}$ of the right-hand side are in the dual of the SOS cone,
    and one of them is $\pmu_{s[1]}$, we have
    \[
      \sum_{(u,s(1),\sigma) \in \Arcs} \pushf{A_{\sigma}}{\pmu_{us(1)\sigma}}
      \succeq \gamma^{2d} \pmu_{s[1]}.
    \]
    Applying $\pushf{A_s}{}$ on both sides and using \eqref{eq:splitdualconsl} gives
    \[
      \sum_{s \in \Arcsin_k(u)} \pushf{A_s}{\pmu_{s[1]}} \succeq \gamma^{2d} \sum_{s \in \Arcsin_{k-1}(u)}\pushf{A_s}{\pmu_{s[1]}}.
    \]
  \end{proof}
\end{lemma}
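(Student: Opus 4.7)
The plan is induction on $k$. For the base case $k=1$, the claim reduces to $\sum_{(u',u,\sigma') \in \Arcs} \pushf{A_{\sigma'}}{\pmu_{u'u\sigma'}} \succeq \gamma^{2d}\pmu_{uv\sigma}$, which follows immediately by applying \eqref{eq:dual1} at node $u$: since every outgoing pseudo-measure $\pmu_{uw\sigma''}$ lies in $\Soshd$ by \eqref{eq:dual2}, the sum on the right-hand side of \eqref{eq:dual1} dominates the single term $\pmu_{uv\sigma}$ appearing inside it.

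For the inductive step, I would decompose each path $s \in \Arcsin_k(u)$ as a first edge $(s(1),s(2),\sigma_1) \in \Arcs$ prepended to a residual path $s' \in \Arcsin_{k-1}(u)$ with $s'(1) = s(2)$. Since $A_s = A_{s'} A_{\sigma_1}$ and $s[1]$ coincides with the prepended edge, pushforwards compose as $\pushf{A_s}{\pmu_{s[1]}} = \pushf{A_{s'}}{\pushf{A_{\sigma_1}}{\pmu_{s(1)s'(1)\sigma_1}}}$. Regrouping the outer sum first by $s'$ and then by the prepended edge yields
\[
\sum_{s \in \Arcsin_k(u)} \pushf{A_s}{\pmu_{s[1]}} = \sum_{s' \in \Arcsin_{k-1}(u)} \pushf{A_{s'}}{\Bigl(\sum_{(w,s'(1),\sigma) \in \Arcs} \pushf{A_\sigma}{\pmu_{ws'(1)\sigma}}\Bigr)}.
\]
I then apply \eqref{eq:dual1} at the node $s'(1)$ to lower bound the inner sum by $\gamma^{2d}\sum_{(s'(1),w',\sigma')\in\Arcs}\pmu_{s'(1)w'\sigma'}$, which in turn dominates $\gamma^{2d}\pmu_{s'[1]}$ because $s'[1]$ is one of the outgoing edges of $s'(1)$ and all the other terms are in $\Soshd$. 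To transfer this bound outside, I use that $\pushf{A_{s'}}{\cdot}$ preserves $\succeq$ on $\Soshd$—indeed, if $p$ is SOS then $p\circ A_{s'}$ is SOS of the same degree, so $\pushf{A_{s'}}{\cdot}$ maps $\Soshd$ into itself. Summing over $s' \in \Arcsin_{k-1}(u)$ gives
\[
\sum_{s \in \Arcsin_k(u)} \pushf{A_s}{\pmu_{s[1]}} \succeq \gamma^{2d}\sum_{s' \in \Arcsin_{k-1}(u)}\pushf{A_{s'}}{\pmu_{s'[1]}},
\]
and the inductive hypothesis upgrades the right-hand side to $\gamma^{2dk}\pmu_{uv\sigma}$, closing the induction.

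The main obstacle is purely notational: under the convention $A_s = A_{\sigma_k}\cdots A_{\sigma_1}$, the pseudo-measure being pushed forward is indexed by the \emph{first} edge $s[1]$, so one must \emph{prepend} (rather than append) an edge to build length $k$ out of length $k-1$ and to align with the node at which \eqref{eq:dual1} is to be invoked. Once that bookkeeping is set up, the two substantive ingredients—iterated application of \eqref{eq:dual1} and the closure of $\Soshd$ under pushforward—are both immediate.
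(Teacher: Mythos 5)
Your proof is correct and follows essentially the same route as the paper's: peel off the first edge of each path in $\Arcsin_k(u)$, apply the dual constraint \eqref{eq:dual1} at the intermediate node $s'(1)$, discard all outgoing terms except $\pmu_{s'[1]}$ using \eqref{eq:dual2}, push forward by $A_{s'}$ (which preserves $\succeq$ since $\Soshd$ is stable under pushforward), and close the induction. Your bookkeeping is in fact a bit cleaner than the paper's displayed recursion (which reuses the letter $u$ for the bound source node), and starting the induction at $k=1$ rather than the paper's "trivial" $k=0$ is a harmless, arguably tidier, choice.
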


\begin{lemma}
  \label{lem:maxdegpathl}
  Consider \defAGe{}.
  For any positive integers $d$ and $l$,
    using \progref{dual} with any $\gamma < \jsrsosa\\(\G,\A)$,
  \algoref{prodl} with paths of length $l$
  produces a $\G^\Tr$-admissible sequence
  $(v_1,v_0,\\\sigma_0),(v_2,v_1,\sigma_1), \ldots$ % of $\Arcs$
  for which the sequence of $\theta_k$ defined in \eqref{eq:thetak}
  satisfies the following inequality for all $k \geq 1$:
  \begin{equation*}
    \theta_{k}
    \geq \frac{\gamma^{2dl}}{\din(v_{k-l+1})}
    \theta_{k-l}
  \end{equation*}
  \begin{proof}
    By \lemref{dualconsl},
    \begin{align*}
      \sum_{s \in \Arcsin_k(v_{k-l+1})} \pE_{s[1]}[p_{k-l}(A_s)]
      & \geq \frac{\gamma^{2dl}}{\mdin_l} \theta_{k-l}.
    \end{align*}
    Since the value of $s$ chosen by \algoref{prodl} maximises $\pE_{s[1]}[p_{k-l}(A_s)]$,
    the left-hand side of the above inequality is smaller or equal to $\din(v_{k-l+1})\theta_k$.
  \end{proof}
\end{lemma}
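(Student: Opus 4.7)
The plan is to combine Lemma \ref{lem:dualconsl} with the greedy argmax rule of Algorithm \ref{algo:prodl}, one iteration at a time, to prove the claimed one-step contraction bound.

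First I would identify $\theta_{k-l}$ as the pairing $\pE_{e_{k-l}}[p_{k-l}(x)]$ between the pseudo-measure at the most-recently-added edge $e_{k-l}$ of the $\G^\Tr$-admissible sequence and the polynomial $p_{k-l}$ accumulated by the algorithm. Here $p_{k-l}(x)$ remains in the SOS cone throughout the iteration because each update $p_{j+1}(x) = p_j(A_s x)$ is a linear pullback of an SOS polynomial, hence itself SOS; and unrolling the update identifies $p_{k-l}(x)$ with $p_0$ composed with the matrix product along the prefix already chosen.

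The key step is then to apply Lemma \ref{lem:dualconsl} with the edge $e_{k-l}$ and path length $l$ to obtain the dual-SOS dominance
\[ \sum_{s \in \Arcsin_l(v_{k-l+1})} \pushf{A_s}{\pmu_{s[1]}} \succeq \gamma^{2dl} \pmu_{e_{k-l}}. \]
Pairing both sides against $p_{k-l}(x)$ converts the SOS dominance into a numerical inequality
\[ \sum_{s \in \Arcsin_l(v_{k-l+1})} \pE_{s[1]}[p_{k-l}(A_s x)] \geq \gamma^{2dl} \theta_{k-l}. \]
Since Algorithm \ref{algo:prodl} picks $s^\star$ realizing the maximum of $\pE_{s[1]}[p_{k-l}(A_s x)]$ and sets $\theta_k$ to this maximum, the left-hand side is bounded above by the number of summands times $\theta_k$, which is precisely the form of the desired contraction.

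The main technical difficulty is in the final counting step: a naive bound on the number of summands is $\din_l(v_{k-l+1})$, the count of length-$l$ paths ending at the relevant node, which can be exponentially larger than the length-$1$ indegree $\din(v_{k-l+1})$ appearing in the statement. Obtaining the sharper $\din$ factor seems to require grouping the paths $s$ by their first edge $s[1]$ — all such paths share the same pseudo-measure $\pmu_{s[1]}$ — so that the effective count in the max-versus-average argument is the number of distinct first edges entering $v_{k-l+1}$ rather than all length-$l$ paths. Making this refinement compatible with the argmax defining $\theta_k$ is the delicate piece of the proof.
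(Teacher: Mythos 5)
Your core argument is the same as the paper's: invoke Lemma~\ref{lem:dualconsl} for paths of length $l$ ending at the current node, pair the resulting dominance in $\Soshd$ against the SOS polynomial $p_{k-l}$ to obtain
$\sum_{s\in\Arcsin_l(v_{k-l+1})}\pE_{s[1]}[p_{k-l}(A_sx)]\geq\gamma^{2dl}\theta_{k-l}$,
and then bound the sum by the number of summands times its maximum, which by the argmax rule of Algorithm~\ref{algo:prodl} equals $\theta_k$. That part is correct and is exactly the paper's (rather terse) proof.

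The ``delicate piece'' you isolate at the end, however, is chasing a typo rather than a real gap. The number of summands is $\din_l(v_{k-l+1})$, the number of length-$l$ paths ending at the node, and that is the constant with which the lemma is actually used downstream: in the proof of Theorem~\ref{theo:fourth} the one-step bound is iterated $k/l$ times to produce $(\mdin_l)^{k/l}$ in the denominator, and the final rate involves $\|(A(\G)^\Tr)^l\|_\infty^{1/(2dl)}=(\mdin_l)^{1/(2dl)}$ --- precisely the $l$-step path count, not the $1$-step indegree. So the $\din(v_{k-l+1})$ in the statement should be read as $\din_l(v_{k-l+1})$, and your ``naive'' count is the intended one. (The paper's own two-line proof also misplaces an $\mdin_l$ into the right-hand side of its first display; the clean statement is $\din_l(v_{k-l+1})\,\theta_k\geq\gamma^{2dl}\theta_{k-l}$.) Your proposed rescue of the literal statement --- grouping paths by their first edge $s[1]$ --- would not work in any case: for $l>1$ the first edges of paths in $\Arcsin_l(v)$ are not the edges entering $v$ (those are the last edges $s[l]$), so their number is unrelated to $\din(v)$; and even within a group sharing the pseudo-measure $\pmu_{s[1]}$, the polynomials $p_{k-l}(A_sx)$ differ across paths, so a group cannot be collapsed into a single term bounded by $\theta_k$ without reintroducing the path count. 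Drop the refinement, state the bound with $\din_l$, and your proof is complete.
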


\theoref{fourth} translates the guarantee on $\theta_k$ to a guarantee on $A_{\sigma_1} \cdots A_{\sigma_k}$ using \lemref{thetatoA}.

\begin{theorem}
  \label{theo:fourth}
  Consider \defAGe{}.
  For any positive integer $d$ and $l$,
  using \progref{dual} with any $\gamma < \jsrsos$,
  \algoref{prodl} with paths of length $l$
  produces a $\G^\Tr$-admissible sequence
  $(v_1,v_0,\sigma_0), (v_2,v_1,\sigma_1), \ldots$ % of $\Arcs$
  that satisfies the following inequality for all $k \geq 1$:
  \begin{equation*}
    \lim_{k \to \infty} \|A_{s_k}\|_2^\frac{1}{k} \geq \frac{\gamma}{\|(A(\G)^\Tr)^l\|_\infty^\frac{1}{2dl}}
  \end{equation*}
  where $s_k = (\sigma_k, \ldots, \sigma_1)$.
  \begin{proof}
    %Consider that $\gamma = \jsrsos - \epsilon$ for some $\epsilon > 0$.
    %Choose any strictly positive polynomial $p(x) \in \Sosh$.
    %Consider the sequence of \arcs{} whose existence is ensured by Lemma~\ref{lem:maxdegpath}.
    By \lemref{maxdegpathl}, for any $k$ multiple of $l$, % TODO lim sup ? lim exists ?
    \begin{equation*}
      %\label{eq:reck3}
      \pE_{s_k[1]}[p_0(A_{s_k}x)] \geq \frac{\gamma^{2dk}}{(\mdin_l)^\frac{k}{l}} \pE_{v_0v_{-1}\sigma_0}[p_0(x)]
    \end{equation*}

    By Lemma~\ref{lem:thetatoA},
    there exists a constant $\tau > 0$ such that
    \[
      \pE_{s_k[1]}[p_0(A_{s_k} x)] \leq \tau \|A_{s_k}\|^{2d}.
    \]

    Combining these two inequalities, we obtain
    \begin{align*}
      \tau \|A_{s_k}\|^{2d} & \geq \frac{\gamma^{2dk}}{(\mdin_l)^\frac{k}{l}} \pE_{v_0v_{-1}\sigma_0}[p_0(x)].
    \end{align*}
    Since $\pE_{v_0v_{-1}\sigma_0}$ is nonzero, $\pE_{v_0v_{-1}\sigma_0}[p_0(x)] > 0$.
    Therefore taking the $(2dk)$th root and the limit $k \to \infty$
    we obtain the result.
  \end{proof}
\end{theorem}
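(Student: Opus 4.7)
The plan is to chain the recursion from Lemma~\ref{lem:maxdegpathl} into an exponential lower bound on $\theta_k$, convert that bound into a lower bound on $\|A_{s_k}\|_2$ through Lemma~\ref{lem:thetatoA}, and finally take the $(2dk)$th root in the limit. First I would restrict attention to the subsequence $k = Jl$, where iterating Lemma~\ref{lem:maxdegpathl} telescopes to
\[
  \theta_{Jl} \;\geq\; \frac{\gamma^{2dJl}}{\prod_{j=1}^{J}\din(v_{(j-1)l+1})}\,\theta_0 \;\geq\; \frac{\gamma^{2dJl}}{\mdin_l(\G)^{J}}\,\theta_0.
\]
The initial value $\theta_0 = \pE_{v_0 v_{-1} \sigma_0}[p_0(x)]$ is strictly positive because Algorithm~\ref{algo:prodl} picks $p_0 \in \inte(\Sosh)$ and an edge on which $\pmu$ has nonzero mass, whose existence is guaranteed by the normalization \eqref{eq:dual3}.

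Next I would identify the combinatorial constant $\mdin_l(\G)$ with $\|(A(\G)^\Tr)^l\|_\infty$: the $(u,v)$-entry of $(A(\G)^\Tr)^l$ counts walks of length $l$ from $v$ to $u$ in $\G$, so the maximum row sum of $(A(\G)^\Tr)^l$ equals $\max_u \din_l(u) = \mdin_l(\G)$. Applying Lemma~\ref{lem:thetatoA} with $p = p_0$ and $A = A_{s_k}$ yields a constant $\tau > 0$ with $\theta_k \leq \tau \|A_{s_k}\|_2^{2d}$ for every $k$. Combining this upper bound with the iterated lower bound gives
\[
  \|A_{s_{Jl}}\|_2^{2d} \;\geq\; \frac{\theta_0}{\tau}\cdot\frac{\gamma^{2dJl}}{\|(A(\G)^\Tr)^l\|_\infty^{J}},
\]
and taking the $(2dJl)$th root before letting $J \to \infty$ kills the prefactor $(\theta_0/\tau)^{1/(2dJl)} \to 1$ and produces the target inequality along the subsequence $k = Jl$.

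The main obstacle I anticipate is a mild one: the argument above only controls $\|A_{s_k}\|_2^{1/k}$ along multiples of $l$, whereas the theorem reads as a limit over all $k$. I would close this gap by decomposing $s_k$ into a prefix of length $Jl$ and a tail of length at most $l-1$, and noting that only the prefix enters the exponential growth while the tail contributes at most a bounded multiplicative correction whose $(1/k)$th root tends to $1$. A secondary technicality is the uniformity of $\tau$ across the matrices $A_{s_k}$ that arise along the generated sequence; this is already built into Lemma~\ref{lem:thetatoA}, whose constant depends only on the family of pseudo-expectations and on $p_0$, not on the matrix argument, so the bound may be applied repeatedly without issue.
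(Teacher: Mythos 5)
Your proposal is correct and follows the same route as the paper's proof: iterate \lemref{maxdegpathl} along the subsequence $k=Jl$, bound $\theta_k$ from above via \lemref{thetatoA}, and take $(2dk)$th roots. You additionally make explicit two points the paper leaves implicit --- the identification $\mdin_l(\G)=\|(A(\G)^\Tr)^l\|_\infty$ and the passage from the subsequence $k=Jl$ to all $k$ via a bounded tail --- both of which are handled correctly.
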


Taking the limit $l \to \infty$ and using \eqref{eq:rhoAG}, we see
that Theorem~\ref{theo:second} is a corollary of
Theorem~\ref{theo:fourth}.

% TODO build example for the conditions of this theorem

% \begin{myrem}
%   By Lemma~\ref{lem:sosbound},
%   in the proof of Theorem~\ref{theo:third}
%   we can take $\beta = \kappa(Q)$ where
%   $Q$ (resp. $P_0$) is the symmetric matrix such that $q(x) = (x^{[d]})^\Tr Qx^{[d]}$ (resp. $p_0(x) = (x^{[d]})^\Tr P_0x^{[d]}$) for all $x \in \R^n$.
%   We can see that $\rho(Q) \leq \rho(A_s)^{2d}$ and $\rho(Q^{-1}) = \check{\rho}(Q)^{-1} \leq [\check{\rho}(A_s)^{2d}\check{\rho}(P_0)]^{-1}$.
%   Also $\rho(A_s) \leq \cjsr^{k_1}$ and $\check{\rho}(A_s) \geq \check{\rho}(\G,\A)^{k_1}$.
%   So
%   \[ \kappa(Q) \leq \Big(\frac{\cjsr}{\check{\rho}(\G,\A)}\Big)^{k_1} \kappa(P_0). \]
% \end{myrem}

\begin{myexem}
  \label{exem:simple3}
  Suppose that we apply Algorithm~\ref{algo:prodl} with $l = 1$ to \exemref{simple2}
  %and the algorithm chooses $p_0(x) = \sum_\alpha c_\alpha x^\alpha$.
  and let us denote by $c_\alpha$ the coefficient of the monomial $x^\alpha$ in the polynomial $p_0(x)$ chosen arbitrarily by the algorihtm.
  The start of the sequence produced depends on the order between
  the coefficients $c_{(2d,0,0)},\allowbreak c_{(0,2d,0)},\allowbreak c_{(0,0,2d)}$.
  If $c_{(2d,0,0)}$ is the largest then
  the $\G$-admissible left-infinite sequence found is
  \[ \ldots,1,2,3,1,2,3,1,2,3. \]

  The product $A_{\sigma_1} A_{\sigma_2} A_{\sigma_3} \cdots = A_3A_2A_1A_3A_2A_1 \cdots $
  is periodic and has an asymptotic growth rate $\rho(A_{\sigma_1}A_{\sigma_2}A_{\sigma_3})^{1/3}\allowbreak = 1$.
  Hence $1 \leq \cjsr$.
  We saw in Example~\ref{exem:simple2} that $\jsrsos = 1$ for any $d$.
  Therefore $\cjsr = 1$.
  %The spectral radius of $A_{\sigma_1}A_{\sigma_2}A_{\sigma_3}$ is 1 so we know that $1 \leq \cjsr$
  %hence $\cjsr = 1$ since we also have $\cjsr \leq \jsrsos{2} = 1$.
  %The finiteness property holds.
\end{myexem}

\begin{figure}[!ht]
  \begin{subfigure}{0.48\textwidth}
  \centering
    \begin{tikzpicture}
      \SetGraphUnit{1.2}
      \GraphInit[vstyle=Dijkstra]
      \SetVertexNoLabel
      \SetUpEdge[style={->}]
      \Vertex[NoLabel=false,L=$v_k$]{0}
      \NOWE(0){4};
      \NOEA(0){5};
      \SO[NoLabel=false,L=$v_{k-1}$](0){11};
      \tikzset{VertexStyle/.append style = {dashed}}
      \SOWE(0){10};
      \SOEA(0){12};
      \Edge[label=?](4)(0);
      \Edge[label=?](5)(0);
      \Edge(0)(11);
      \tikzset{EdgeStyle/.append style = {dashed}}
      \Edge(0)(10);
      \Edge(0)(12);
    \end{tikzpicture}
    \caption{One iteration of Algorithm~\ref{algo:prodl} with $l = 1$.}
    \label{fig:path1}
  \end{subfigure}
  \begin{subfigure}{0.48\textwidth}
  \centering
    \begin{tikzpicture}
      \SetGraphUnit{1.2}
      \GraphInit[vstyle=Dijkstra]
      \SetVertexNoLabel
      \SetUpEdge[style={->}]
      %\SetVertexSimple[FillColor = blue]
      \Vertex[NoLabel=false,L=$v_k$]{0}
      \NOWE(0){4};
      \NOEA(0){5};
      \NOWE(4){6};
      \NO  (4){7};
      \NOEA(4){8};
      \NOEA(5){9};
      \SO[NoLabel=false,L=$v_{k-1}$](0){11};
      \tikzset{VertexStyle/.append style = {dashed}}
      \SOWE(4){1};
      \EA(0){2};
      \EA(2){3};
      \SOWE(0){10};
      \SOEA(0){12};
      \Edge[label=?](6)(4);
      \Edge[label=?](7)(4);
      \Edge[label=?](8)(4);
      \Edge[label=?](8)(5);
      \Edge[label=?](9)(5);
      \Edge[label=?](4)(0);
      \Edge[label=?](5)(0);
      \Edge(0)(11);
      \tikzset{EdgeStyle/.append style = {dashed}}
      \Edge(4)(1);
      \Edge(5)(2);
      \Edge(5)(3);
      \Edge(0)(10);
      \Edge(0)(12);
    \end{tikzpicture}
    \caption{One iteration of Algorithm~\ref{algo:prodl} with $l = 2$.}
    \label{fig:path2}
  \end{subfigure}
  \caption{Comparison between Algorithm~\ref{algo:prodl} with $l = 1$ and $l = 2$.
  The solid edge denotes the the edge $(v_k, v_{k-1}, \sigma_k)$
  and the edges with question marks denote the incoming path considered by the iteration of the algorithm.}
  \label{fig:paths}
\end{figure}
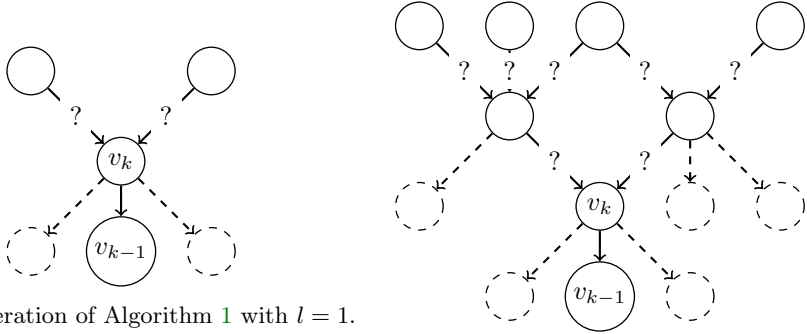

\subsection{Producing lower bounds}
\label{sec:lb}

%\todo[inline]{Use Elsner Lemma to give a guarantee on the period.}

By definition of the CJSR, the asymptotic growth rate of the norm of the product of any $\G$-admissible (or $\G^\Tr $-admissible) sequence of matrices gives a lower bound for the CJSR.
In particular the sequence produced by Algorithm~\ref{algo:prodl} provides a lower bound for the CJSR.

If there are two integers $\bar{k},k$ such that the sequence after $\bar{k}$ is periodic of period $k$,
the asymptotic growth rate of the norm is equal to the $k$th root of the spectral radius of the product of the matrices of one period.
This is due to the Gelfand's formula $\rho(A) = \lim_{k \to \infty} \|A^k\|^{1/k}$.
From the same identity,
we see that the spectral radius of the product of the matrices of one $\G$-admissible cycle gives a lower bound for the CJSR.

%If a sequence of matrices is periodic or if a sequence of matr
%If the sequence produced by Algorithm~\ref{algo:prod} or Algorithm~\ref{algo:prodl}
%is periodic, we can compute the asymptotic growth rate of the norm.
%From the identity ,
%the asymptotic growth rate of the norm of the product of the matrices of the sequence
%is equal to the spectral radius of the product of the matrices of one period.
%For this reason, the spectral radius of the product of the matrices of any cycle of the automaton
%gives a lower bound for the CJSR.

% More formally, for any natural $k$, we have
% \[ \rho_k(\G, \A) \leq \cjsr \]
% where
% \[ \rho_k(\G, \A) = \max \big\{\, \rho(A_c)^{1/k} : c \in \G_k, c \text{ is a cycle}\,\big\}. \]
% This is the generalization to the constrained case of the first inequality of the three members inequalities; see \cite[\Sect1.2.2.6]{jungers2009joint}.

To find lower bounds for the CJSR,
one could generate all the cycles of length smaller than some maximum length
and compute the spectral radius for all of them.
This brute force approach is not scalable because the number of paths considered grows exponentially with the maximum length.%
\footnote{The exponential growth of the brute force approach is the reason why one should choose a small $l$ for Algorithm~\ref{algo:prodl}.}
%The advantage of Algorithm~\ref{algo:prodl} over the brute force approach and Gripenberg's algorithm is that we can guarantee
%the maximum computing time that will be used to find a lower bound of a certain accuracy thanks to Theorem~\ref{theo:fourth}. % WRONG

Gripenberg~\cite{gripenberg1996computing} proposes a branch-and-bound algorithm that prunes the search using an a priori fixed absolute error.
Two other branch-and-bound variant exists:
the balanced complex polytope algorithm \cite{guglielmi2008algorithm}
and the invariant conitope algorithm~\cite{jungers2014lifted}.

These algorithms can also be used to produce a $\G$-admissible sequence of matrices of high asymptotic growth rate
by reproducing the cycles of high spectral radius infinitely.
The advantage of Algorithm~\ref{algo:prodl} is that it provides a guarantee of accuracy given in \theoref{fourth}.
Algorithm~\ref{algo:prodl} provides at the same time a high growth infinite trajectory and lower bounds of guaranteed accuracy.

We can compute lower bounds using the upper bound provided by Program~\ref{prog:primal} and Corollary~\ref{coro:mix}
but in practice the trajectories are periodic after some time $\bar{k}$ so we are able to compute
much better lower bounds than the pessimistic bound provided by Corollary~\ref{coro:mix}.
This is shown by the following example.

\begin{myexem}
  \label{exem:run4}
  We tried the atom extraction procedure and Algorithm~\ref{algo:prodl} for $l = 1$ and $l = 3$ on our running example;
  see Example~\ref{exem:run1}, Example~\ref{exem:run2} and Example~\ref{exem:run3}.
  The code used to obtain the results of this exemple can be found on the author's website. %\footnote{\url{https://perso.uclouvain.be/benoit.legat/}}.
  The result is shown in \figref{pedj}.
  We showed in \cite{legat2016generating} that the CJSR of the system is equal to 0.97482.
  We can see that this lower bound is found for $d = 4$ for $l=1$ and for $d=1$ for $l=3$.
  %\footnote{We can see that it is found for an earlier $d$ when $l=1$ compared to what is reported in \cite{legat2016generating}. The reason of this difference is explained in \remref{run}}.
  The atom extraction finds the lower bound 0.939255.
\end{myexem}

\begin{figure}[!ht]
  \begin{subfigure}{0.48\textwidth}
    \centering
    \includegraphics[width=\textwidth]{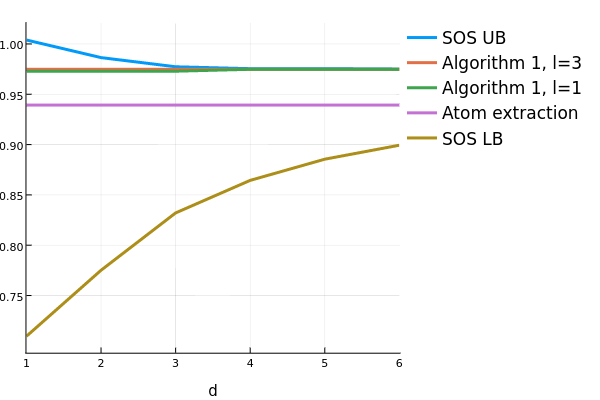}
  \end{subfigure}
  \begin{subfigure}{0.48\textwidth}
    \centering
    \includegraphics[width=\textwidth]{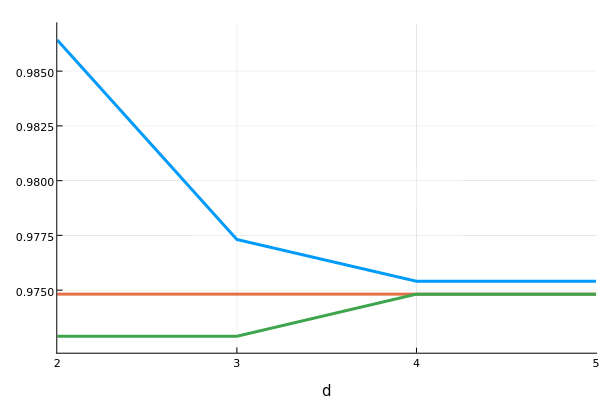}
  \end{subfigure}
  \caption{Result of \exemref{run4}. The SOS UB is the upper bound found by \progref{primal} and the SOS LB is obtained from this upper bound the guarantee given in \cororef{mix}. The value $d$ of horizontal axis corresponds to using polynomials of degree $2d$. The right figure is a zoom of the left figure.}
  \label{fig:pedj}
\end{figure}

\subsection{Improving the automaton-dependent bounds}
Summarizing our results above,
after solving Program~\ref{prog:primal}, we obtain an upper bound on
the CJSR and a lower bound thanks to Corollary~\ref{coro:mix}.
Running Algorithm~\ref{algo:prodl} provides lower bounds that we can
compute if the sequence produced is periodic and this lower bound will
always be at least as high than the lower bound produced by
Theorem~\ref{theo:second}.

In this subsection we show that there is another way to improve the lower
bound provided by Theorem~\ref{theo:second} using the dual solution.
The improved lower bound will never be higher than the bound provided by
Algorithm~\ref{algo:prodl} but it only requires checking which dual values are zero
so it almost does not require any computation.
This lower bound is provided by the \theoref{fifth} which is a
generalization of \cite[Theorem~3.10]{philippe2016stability}.

It is based on the fact that if in an optimal dual solution the dual
variable of an edge is 0 then removing the dual variable and the
primal constraint of this edge does not affect $\jsrsos$.
We could prove the following theorem using this fact but we give an
altervative proof using Algorithm~\ref{algo:prodl}.

\begin{definition}
  Consider \defAG{} and a positive integer $d$.
  The set of \arcs{} $\Arcs_{2d}$
  is the set of \arcs{} $e \in \Arcs$ such that there exists
  $\delta > 0$ such that for all $0 < \epsilon < \delta$, there is a
  solution of Program~\ref{prog:dual} with
  $\gamma \geq \jsrsos - \epsilon$ such that $\pmu_e = 0$.  We define
  the graph $\G_{2d}(\Nodes,\Arcs_{2d})$ using this set of \arcs{}.
\end{definition}

\begin{theorem}
  \label{theo:fifth}
  Consider \defAG{} and a positive integer $d$.
  The approximation given by Program~\ref{prog:primal} using homogeneous polynomials of degree $2d$
  satisfies:
  \[ \jsrsos \leq \rho(\G_{2d})^{\frac{1}{2d}}\cjsr \]
  where $A(\G_{2d})$ is the adjacency matrix of $\G_{2d}$.
  \begin{proof}
    For any edge $e \in \Arcs$ such that $\pmu_e = 0$, removing this
    edge does not violate any dual constraint \eqref{eq:dual1}.

    Using Theorem~\ref{theo:fourth} once we have removed all these
    edges and taking the limit $l \to \infty$ we obtain the result.
  \end{proof}
\end{theorem}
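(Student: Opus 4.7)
The plan is to follow the sketch: first show that edges whose dual variable vanishes in an optimal solution can be pruned from $\G$ without changing $\jsrsos$, then invoke \theoref{fourth} on the pruned graph and pass to the limit $l \to \infty$.

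First I would prove the basic pruning observation: if $(\pmu_e)_{e \in \Arcs}$ is feasible for \progref{dual} with value $\gamma$ and $\pmu_{e_0} = 0$ for some edge $e_0 = (u,v,\sigma) \in \Arcs$, then dropping $e_0$ and the variable $\pmu_{e_0}$ yields a feasible solution of the dual program associated with the reduced system $(\G \setminus \{e_0\}, \A)$ with the same value $\gamma$. Each constraint \eqref{eq:dual1} is preserved because the summands disappearing from both sides, $\pushf{A_\sigma}{\pmu_{e_0}}$ and $\pmu_{e_0}$, are zero. The normalization \eqref{eq:dual3} is unaffected because $\pmu_{e_0}(\Sn)=0$, and \eqref{eq:dual2} is trivial. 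Consequently the SOS value of the reduced system is at least $\gamma$, while the reverse inequality is automatic since restricting the graph never increases $\jsrsos$.

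Second I would iterate this reduction over every edge that can be zeroed out in some near-optimal solution, obtaining the pruned graph $\G_{2d}$. A subtlety is that the definition of $\Arcs_{2d}$ allows a different solution for each edge; to remove all such edges simultaneously, I would fix a common $\epsilon > 0$ smaller than every $\delta_e$, pick the associated solutions $\pmu^{(e,\epsilon)}$ and form their convex combination $\bar{\pmu} = \frac{1}{|\Arcs_{2d}|} \sum_{e \in \Arcs_{2d}} \pmu^{(e,\epsilon)}$. Since the feasible set of \progref{dual} at a fixed $\gamma$ is convex and \eqref{eq:dual3} is preserved by averaging, $\bar{\pmu}$ is feasible with value at least $\jsrsos - \epsilon$, and $\bar{\pmu}_e = 0$ for every $e \in \Arcs_{2d}$. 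Applying the pruning observation once per edge then shows that the SOS value of the pruned system is at least $\jsrsos - \epsilon$, and sending $\epsilon \to 0$ gives equality.

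Finally I would apply \theoref{fourth} to the pruned pair $(\G_{2d}, \A)$ and let $l \to \infty$. For every $\gamma < \jsrsos$, Algorithm~\ref{algo:prodl} produces on the pruned system a $\G_{2d}^\Tr$-admissible sequence with
\[ \liminf_{k \to \infty} \|A_{s_k}\|_2^{1/k} \geq \frac{\gamma}{\|(A(\G_{2d})^\Tr)^l\|_\infty^{1/(2dl)}}, \]
and invoking \eqref{eq:rhoAG} yields $\cjsr(\G_{2d}, \A) \geq \gamma / \rho(A(\G_{2d}))^{1/(2d)}$. Combining this with $\cjsr(\G_{2d}, \A) \leq \cjsr$ (any trajectory admissible in the subgraph is admissible in the original graph) and sending $\gamma \to \jsrsos$ gives $\jsrsos \leq \rho(A(\G_{2d}))^{1/(2d)} \cjsr$, as required. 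The main obstacle is the aggregation step of the second paragraph, because $\Arcs_{2d}$ is defined one edge at a time with a possibly different near-optimal solution for each edge; the convex-combination trick works provided a common $\epsilon$ is chosen uniformly below all the individual $\delta_e$ and the normalization is rescaled afterward.
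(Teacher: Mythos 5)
Your overall route is the same as the paper's: prune the edges whose dual variables vanish in near-optimal dual solutions, observe that this leaves $\jsrsos$ unchanged, then run \theoref{fourth} on the pruned system and let $l \to \infty$ using \eqref{eq:rhoAG}. Your first and third paragraphs are sound (modulo the fact that the pruned graph need not remain strongly connected, a hypothesis used implicitly by Algorithm~\ref{algo:prodl} and \lemref{rhoAG}; the paper glosses over this too).

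The genuine gap is in your aggregation step. You need a single near-optimal solution that vanishes on \emph{every} removable edge simultaneously, and you propose the average $\bar{\pmu} = \frac{1}{|\Arcs_{2d}|}\sum_{e}\pmu^{(e,\epsilon)}$. But $\bar{\pmu}_{e} = \frac{1}{|\Arcs_{2d}|}\sum_{e'}\pmu^{(e',\epsilon)}_{e}$, and only the $e'=e$ term is known to vanish; the solutions $\pmu^{(e',\epsilon)}$ for $e'\neq e$ may be nonzero on $e$, and since each $\pmu^{(e',\epsilon)}_{e}$ lies in the cone $\Soshd$ these contributions cannot cancel. Convex combination preserves (indeed, unions) supports rather than shrinking them, so the claim $\bar{\pmu}_e=0$ for all $e$ fails in general; compare the simplex in $\R^2$, where each coordinate can be zeroed by some point but no point zeroes both. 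The per-edge definition of $\Arcs_{2d}$ therefore does not by itself produce a solution supported off all of $\Arcs_{2d}$, and a further argument (or a stronger, simultaneous definition of $\Arcs_{2d}$) is needed. To be fair, the paper's one-line proof silently assumes the same simultaneity, so you have correctly isolated the delicate point even though your fix does not close it. A minor additional remark: as literally defined, $\G_{2d}$ is the graph of the \emph{removable} edges, whereas your argument (and the bound as intended) concerns the complementary graph of retained edges; your reading is the one under which the theorem is meaningful.
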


\begin{myrem}
  Note that it is not true that $\G_{2d_2} \subseteq \G_{2d_1}$ when
  $d_2 \geq d_1$.
  Indeed, as we have seen with Example~\ref{exem:run4} the dual
  variables of \arcs{} that were needed to have an s.m.p. are zero
  for $2d < 10$ so preventing Algorithm~\ref{algo:prodl} to choose
  these \arcs{} even for $l > 1$ prevents it to find the s.m.p. for
  $2d < 10$.
\end{myrem}

\section{Low rank reduction}
% TODO add a running example
\label{sec:lowrank}
Suppose we want to compute the CJSR of a finite set of matrices $\A \eqdef \{A_1, \ldots, A_m\} \subset \mathbb{R}^{n \times n}$ of rank at most $r$ constrained by an automaton $\G(V,E)$.
For $i = 1, \ldots, m$, since the matrix $A_i$ has rank at most $r$, there exists $X_i, Y_i \in \mathbb{R}^{n \times r}$ such that $A_i = X_iY_i^T$.
This can be used to build a new system with matrices of $\R^{r \times r}$ with the same CJSR.
This new system can therefore be used to reduce the computation the CJSR of system of low rank matrices to a system system matrices of small size.
Note that in the case $r=1$, it is known that the CJSR is computable in polynomial time~\cite{ahmadi2012joint}.

\begin{theorem}[Low Rank Reduction]
  \label{theo:lowrank}
  Consider a finite set of matrices $\A \eqdef \{A_1, \ldots, A_m\} \subset \mathbb{R}^{n \times n}$ of rank at most $r$
  constrained by an automaton $\G(V,E)$.

  For a fixed decomposition $A_\sigma = X_\sigma Y_\sigma^T$ for $\sigma = 1, \ldots, m$ where $X_\sigma, Y_\sigma \in \R^{n \times r}$,
  denote the set of matrices $\A' \eqdef \{A_{\sigma_1\sigma_2}' \mid \sigma_1, \sigma_2 = 1, \ldots, m\} \subset \mathbb{R}^{r \times r}$ where $A_{\sigma_1\sigma_2}' = Y_{\sigma_1}^TX_{\sigma_2}$.
  Define the graph $\G'(V',E')$ with $V' \eqdef E$ and
  %be the graph with one node in $V'$ for each edge in $E$ and
  %an edge $(i,j,ij) \in E'$ if $(i, j)$ is admissible in $\G$.
  \[
    E' \eqdef \{\, ((u,v,\sigma_1), (v,w,\sigma_2), \sigma_2\sigma_1) \mid (u,v,\sigma_1),(v,w,\sigma_2) \in E \,\}.
  \]
  %between a nodes representing $e_1 \in E$ and an edge representing $e_2 \in E$ in $\G'$ if it is possible to take the edge
  Then the two CJSR are the same:
  \[ \cjsr = \cjsrp. \]
  \begin{proof}
    As the CJSR does not depend on the norm used, we choose a norm $\|\cdot\|$ that is \emph{submultiplicative},
    that is $\|AB\| \leq \|A\| \|B\|$ for all matrices $A,B$.
    For example, any norm induced by a vector norm is submultiplicative.

    Let $\beta = \max_{i=1}^m\max\{\|X_i\|, \|Y_i^T\|\}$.
    If $\beta = 0$, then $\cjsr = 0 = \cjsrp$.
    Therefore we may assume that $\beta > 0$.
    Consider a positive integer $k$.
    We first show that $[\cjsrk]^k \leq \beta^2[\cjsrkp[k-1]]^{k-1}$.
    For any $\G$-admissible $(\sigma_1, \ldots, \sigma_k)$, we have
    \begin{equation}
      \label{eq:redu}
      A_{\sigma_k} \cdots A_{\sigma_2}A_{\sigma_1} = X_{\sigma_k}A_{\sigma_k\sigma_{k-1}}' \cdots A_{\sigma_3\sigma_2}' A_{\sigma_2\sigma_1}'Y_{\sigma_1}^T.
    \end{equation}
    using
    %\eqref{eq:redu} and
    the submultiplicativity of the norm chosen, we have
    \begin{align*}
      \|A_{\sigma_k} \cdots A_{\sigma_2}A_{\sigma_1}\|
      & \leq \|X_{\sigma_k}\|\|A_{\sigma_k\sigma_{k-1}}' \cdots A_{\sigma_3\sigma_2}' A_{\sigma_2\sigma_1}'\|\|Y_{\sigma_1}^T\|\\
      & \leq \beta^2\|A_{\sigma_k\sigma_{k-1}}' \cdots A_{\sigma_3\sigma_2}' A_{\sigma_2\sigma_1}'\|\\
      & \leq \beta^2 [\cjsrkp[k-1]]^{k-1}.
    \end{align*}

    The same way, we now show that $[\cjsrkp[k-1]]^{k-1} \leq \beta^2 [\cjsrk[k-2]]^{k-2}$.
    For any $\G'$-admissible $(\sigma_2\sigma_1, \ldots, \sigma_k\sigma_{k-1})$,
    \begin{align*}
      \|A_{\sigma_k\sigma_{k-1}}' \cdots A_{\sigma_3\sigma_2}' A_{\sigma_2\sigma_1}'\|
      & \leq \|Y_k^T\|\|A_{\sigma_{k-1}} \cdots A_{\sigma_2}\|\|X_1\|\\
      & \leq \beta^2 [\cjsrk[k-2]]^{k-2}.
    \end{align*}

    In summary, we have
    \[ \cjsrk \leq \beta^{\frac{2}{k}} [\cjsrkp[k-1]]^{\frac{k-1}{k}} \leq \beta^{\frac{4}{k}} [\cjsrk[k-2]]^{\frac{k-2}{k}}. \]
    Taking the limit $k \to \infty$ we get $\cjsr \leq \cjsrp \leq \cjsr$.
  \end{proof}
\end{theorem}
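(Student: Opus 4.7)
The plan is to relate the two CJSRs by a telescoping identity and a two-sided norm estimate, much in the spirit of the rank-one reduction of \cite{ahmadi2012joint}. Since the CJSR does not depend on the choice of matrix norm, I would fix any submultiplicative norm $\|\cdot\|$ (for instance an operator norm induced by a vector norm) and set $\beta = \max_{i}\max\{\|X_i\|,\|Y_i^\Tr\|\}$. The degenerate case $\beta = 0$ makes all $A_i$ and $A'_{\sigma_1\sigma_2}$ vanish, forcing $\cjsr = 0 = \cjsrp$, so we may assume $\beta > 0$.

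The key algebraic fact is the telescoping identity
\[
A_{\sigma_k}\cdots A_{\sigma_2}A_{\sigma_1}
= X_{\sigma_k}\bigl(Y_{\sigma_k}^\Tr X_{\sigma_{k-1}}\bigr)\cdots \bigl(Y_{\sigma_2}^\Tr X_{\sigma_1}\bigr)Y_{\sigma_1}^\Tr
= X_{\sigma_k}A'_{\sigma_k\sigma_{k-1}}\cdots A'_{\sigma_2\sigma_1}Y_{\sigma_1}^\Tr,
\]
obtained by inserting $A_i = X_i Y_i^\Tr$ and regrouping adjacent factors. Rearranging the same expression with the outer factors moved inside yields
\[
A'_{\sigma_k\sigma_{k-1}}\cdots A'_{\sigma_2\sigma_1} = Y_{\sigma_k}^\Tr A_{\sigma_{k-1}}\cdots A_{\sigma_2} X_{\sigma_1}.
\]
Before invoking these identities I would verify that the bijection they suggest respects admissibility: an admissible sequence $(\sigma_1,\ldots,\sigma_k)$ in $\G$ uses consecutive edges $e_1,\ldots,e_k$ that share endpoints, and by the definition of $E'$ the pairs $(e_i,e_{i+1},\sigma_{i+1}\sigma_i)$ form an admissible path of length $k-1$ in $\G'$; and conversely every $\G'$-admissible path of length $k-1$ arises this way.

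Given this correspondence, submultiplicativity of the norm applied to the two identities gives, for every $k \geq 3$,
\[
[\cjsrk]^k \leq \beta^2[\cjsrkp[k-1]]^{k-1}
\quad\text{and}\quad
[\cjsrkp[k-1]]^{k-1} \leq \beta^2[\cjsrk[k-2]]^{k-2},
\]
where for the first inequality I maximize over $\G$-admissible sequences of length $k$, and for the second over $\G'$-admissible sequences of length $k-1$. Chaining these,
\[
\cjsrk \;\leq\; \beta^{2/k}[\cjsrkp[k-1]]^{(k-1)/k} \;\leq\; \beta^{4/k}[\cjsrk[k-2]]^{(k-2)/k},
\]
and letting $k \to \infty$ (so $\beta^{2/k}\to 1$ and the exponents tend to $1$) sandwiches $\cjsrp$ between two quantities that both converge to $\cjsr$, giving $\cjsr = \cjsrp$.

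The steps are essentially routine once the identity is in hand; the only point to execute carefully is the admissibility correspondence, since $\G'$ is defined on $V' = E$ rather than $V$, and one must check that the natural bijection between $\G$-admissible sequences of length $k$ and $\G'$-admissible sequences of length $k-1$ is indeed induced by reading off the labels $\sigma_{i+1}\sigma_i$. I do not expect any genuine obstacle beyond bookkeeping.
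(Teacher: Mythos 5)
Your proposal is correct and follows essentially the same route as the paper's proof: the same submultiplicative-norm setup, the same telescoping identities $A_{\sigma_k}\cdots A_{\sigma_1} = X_{\sigma_k}A'_{\sigma_k\sigma_{k-1}}\cdots A'_{\sigma_2\sigma_1}Y_{\sigma_1}^\Tr$ and $A'_{\sigma_k\sigma_{k-1}}\cdots A'_{\sigma_2\sigma_1} = Y_{\sigma_k}^\Tr A_{\sigma_{k-1}}\cdots A_{\sigma_2}X_{\sigma_1}$, and the same two-sided sandwich $\cjsrk \leq \beta^{2/k}[\cjsrkp[k-1]]^{(k-1)/k} \leq \beta^{4/k}[\cjsrk[k-2]]^{(k-2)/k}$ followed by $k \to \infty$. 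Your explicit attention to the admissibility bijection between $\G$-paths of length $k$ and $\G'$-paths of length $k-1$ is a detail the paper leaves implicit, and it is handled correctly.
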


\begin{myexem}
  Consider an unconstrained switched system with 2 rank $r$ matrices $A_1, A_2$.
  This system is equivalent to the constrained switched system with automaton represented in \figref{redu1}.
  Its low rank reduction is represented in \figref{redu2}.
  \begin{figure}[!ht]
    \begin{subfigure}{0.48\textwidth}
      \centering
      \begin{tikzpicture}
        \SetGraphUnit{3}
        \GraphInit[vstyle=Dijkstra]
        \SetUpEdge[style={->}]
        \Vertex{0}
        \Loop[labelstyle={above=2pt,fill=white},dist=1.5cm,dir=NO,label={$A_1 = X_1Y_1^T$}](0)
        \Loop[labelstyle={below=2pt,fill=white},dist=1.5cm,dir=SO,label={$A_2 = X_2Y_2^T$}](0)
      \end{tikzpicture}
      \caption{Automaton $\G$.
      %To give an example for the notation, we
      We
      have $V = \{0\}$ and $E = \{(0,0,1),(0,0,2)\}$.}
      \label{fig:redu1}
    \end{subfigure}
    \begin{subfigure}{0.48\textwidth}
      \centering
      \begin{tikzpicture}
        \SetGraphUnit{3}
        %\GraphInit[vstyle=Dijkstra]
        \SetUpEdge[style={->}]%,
        %labelstyle = {sloped,draw}]
        \tikzset{EdgeStyle/.style = {distance = 30}}
        \Vertex[NoLabel=false]{1}
        \EA[NoLabel=false](1){2}
        \tikzset{EdgeStyle/.append style = {bend right=60}}
        \Loop[labelstyle={above=0pt},dist=1.5cm,dir=EA,label={$A_{11}' = Y_1^TX_1$}](1)
        \Loop[labelstyle={above=0pt},dist=1.5cm,dir=EA,label={$A_{22}' = Y_2^TX_2$}](2)
        \Edge[labelstyle={above=4pt, fill opacity=0, text opacity=1},label={$A_{12}' = Y_1^TX_2$}](1)(2)
        \Edge[labelstyle={below=4pt, fill opacity=0, text opacity=1},label={$A_{21}' = Y_2^TX_1$}](2)(1)
      \end{tikzpicture}
      \caption{Automaton $\G'$.
      We have $V' = \{1,2\}$ and
      $E' = \{(1,1,11),\allowbreak (1,2,12),\allowbreak (2,1,21),\allowbreak (2,2,22)\}$.}
      \label{fig:redu2}
    \end{subfigure}
    \caption{Simple example of the low rank reduction.}
    \label{fig:redu}
  \end{figure}
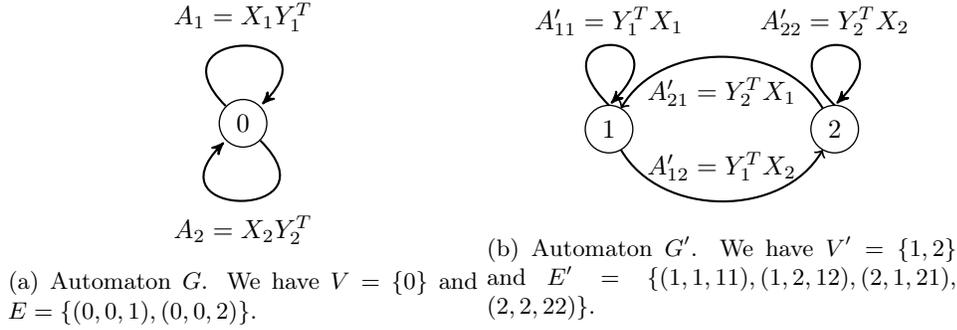
\end{myexem}

% Look at irreducible, write the algebraic condition, there should be finite time algo that solve it (maybe not efficient)
\begin{myrem}
  The matrices $X_\sigma, Y_\sigma$ of the factorization $A_\sigma = X_\sigma Y_\sigma^T$ are not unique.
  For any invertible matrix $S \in \mathbb{R}^{r \times r}$, $A_\sigma = (X_\sigma S)(S^{-1}Y_\sigma^T)$ also gives a factorization.
  However, if $\cjsrp$ is approximated using the sum of squares algorithm of \Sect\ref{sec:sos},
  any two factorizations will give the same approximation.
  The effect of using $X_\sigma S$ and $Y_\sigma S^{-T}$ instead of $X_\sigma$ and $Y_\sigma$ will simply be a linear change of variable of the polynomial $p_\sigma$;
  see \Sect\ref{sec:sos}.
  % TODO like rho(TAT^{-1}) -> irreducible, block, ...

  %If the rank of $A_\sigma$ is strictly less than $r$, we have the choice of choosing $X_\sigma$ and $Y_\sigma$ with the same rank of $A_\sigma$
  %or choosing $X_\sigma$ with the same rank of $A_\sigma$ and $Y_\sigma$ full rank for example.
  %For this choice, the question remains open. %\todo{TODO}
\end{myrem}

We now discuss the reduction quantitatively.
Suppose first that we want to approximate the unconstrained JSR of $m$ matrices of dimension $n$ and rank at most $r$.
Using Theorem~\ref{theo:lowrank}, we see that the unconstrained JSR is equal to the CJSR of $m^2$ matrices
of rank $r$ constrained by an automaton with $m$ nodes and $m^2$ edges,
the underlying directed graph of the automaton is the complete graph with a loop at every node.
This is summarized in Table~\ref{tab:jsrredu}.
%which requires $\bigoh(m^2r^2n)$ operations without taking the rank factorization into account. % TODO complexity of rank factorization

Suppose now that we want to approximate the CJSR of $m$ matrices of dimension $n$ and rank at most $r$ constrained by an automaton
of $|V|$ nodes and $|E|$ edges.
Using \theoref{lowrank}, we see that this CJSR is equal to the CJSR of matrices of dimension $r$.
The number of matrices in this CJSR might be lower than $m^2$ because
if $(\sigma_1, \sigma_2)$ is not $\G$-admissible, then we do not need $A_{\sigma_2\sigma_1}'$.
The size of each important quantity before and after the reduction is given by Table~\ref{tab:cjsrredu}.
We have just explained why $|\A'| \leq m^2$.
We can see that by definition of $E'$, $|E'| \leq |E|^2$ with equality if and only if $|V| = 1$, that is the unconstrained case.
Intuitively, the ``more'' the original CJSR is constrained, the ``sparser'' $\G'$ will be.

\begin{table}[!ht]
  \begin{subtable}{0.48\textwidth}
    \centering
    \begin{tabular}{l|cc}
      Quantity & $\A$ & $(\G',\A')$\\
      \hline
      dimension & $n$ & $r$\\
      rank & $r$ & $r$\\
      matrices & $m$ & $m$\\
      nodes &  & $m$\\
      edges &  & $m^2$\\
    \end{tabular}
    \caption{Unconstrained case.}%The quantity of the CJSR $(\G',\A')$ obtained by the reduction of Theorem~\ref{theo:lowrank} as a function of the quantity of the original JSR $\A$.}
    \label{tab:jsrredu}
  \end{subtable}
  \begin{subtable}{0.48\textwidth}
    \centering
    \begin{tabular}{l|cc}
      Quantity & $(\G,\A)$ & $(\G',\A')$\\
      \hline
      dimension & $n$ & $r$\\
      rank & $r$ & $r$\\
      matrices & $m$ & $\leq m^2$\\
      nodes & $|V|$ & $|E|$\\
      edges & $|E|$ & $\leq |E|^2$\\
    \end{tabular}
    \caption{Constrained case.}%The quantity of the CJSR $(\G',\A')$ obtained by the reduction of Theorem~\ref{theo:lowrank} as a function of the quantity of the original CJSR $(\G,\A)$.}
    \label{tab:cjsrredu}
  \end{subtable}
  \caption{Quantification of the low rank reduction of Theorem~\ref{theo:lowrank}. The quantity of the CJSR $(\G',\A')$ obtained by the reduction are expressed as a function of the quantity of the original CJSR $(\G,\A)$.}
  \label{tab:quant}
\end{table}

%\todo{Add example of low rank system (consensus ?)}
%\todo{We can remove linear spaces that are not in the invariant space containing the JSR}

\section{Conclusions}
We have analysed the dual of the SOS Lyapunov program for switched systems
and shown how to leverage it to study the system stability.
We also generalized the whole approach to the constrained switched systems,
a class of systems that has attracted increasing attention recently.

It turns out from our analysis that these two concepts are intrinsically related:
Our \theoref{fourth}, which leverages the dual of the classical JSR algorithm,
actually naturally applies to the constrained case;
Even more, \propref{Bsc} transforms an unconstrained system into a scalar constrained one for the purpose of computing a lower bound.
Finally, we show in \theoref{lowrank} that unconstrained systems with low rank matrices naturally lead to the definition of an auxiliary \emph{constrained} system.

%As it turns out from our analysis, these two concepts are intrinsically related.
%%The study of both topics in the same paper is motivated by the following results.
%We gave a rounding algorithm (\algoref{prodl}) that generates from a solution of the dual program a
%switching sequence of growth rate guaranteed by \theoref{fourth}. From this guarantee, we deduced
%a new estimate of the accuracy of the SOS-based approximation algorithm for the CJSR
%which is better than the previously existing one for sufficiently large SOS degree.
%We also showed in \propref{Bsc} that given atomic measures solution of this dual program,
%we can extract a high growth-rate trajectory of the switched systems
%by transforming them to the solution a scalar constraint switched systems.
%Finally, in \theoref{lowrank}, we introduced a reduction of the estimation of the (C)JSR of switched systems
%of low rank matrices to the CJSR of a switched system of low dimension matrices.

We have introduced two techniques to generate lower bounds from the solution of the SOS dual program.
In practice, these techniques provide periodic trajectories of high asymptotic growth rate.
Since the SOS program can be solved efficiently,
does this give an efficient algorithm to generate lower bounds on the CJSR with guaranteed accuracy?
This is not clear, because our algorithm provides firm guarantees only when the computed measures are atomic, which is not always the case.

More generally, the techniques developed in this work, based on
generating ``bad'' trajectories for a dynamical system via dual solutions% of the natural convex problems used for analysis
, naturally extend to many other problems in systems theory. We are currently
exploring such possibilities.

\bibliographystyle{siamplain}
\bibliography{biblio}

\appendix
\section{Stability certificates and duality}
\begin{theorem}
  \label{theo:stabcert}
  Consider \defAGe{}.
% If there exists a strictly positive homogeneous function $f_v(x)$ for all $v \in \Nodes$
% such that
% \[ f_{v}(A_\sigma x) \leq \gamub f_{u}(x) \]
% holds for all \arc{} $(u,v,\sigma) \in \Arcs$.
% Then $\lim_{k \to \infty} \cjsrk \leq \gamub$.
  We have
  \[ \lim_{k \to \infty} \cjsrk \leq \gamubopt. \]
  \begin{proof}
    Consider a norm $\|\cdot\|$ of $\R^n$ and its corresponding induced matrix norm of $\R^{n \times n}$.
    For each $v \in \Nodes$, we know by compactness of the unit ball in $\R^n$, continuity and strict positivity of $f_v(x)$ that there exist $0 < \alpha_v \leq \beta_v$ such that
    \[ \alpha_v \|x\| \leq f_v(x) \leq \beta_v\|x\| \]
    for all $x \in \R^n$.
    Let $\alpha = \min_{v \in \Nodes} \alpha_v$ and $\beta = \max_{v \in \Nodes} \beta_v$.

    For a $\G$-admissible $k$-uple $(\sigma_1, \sigma_2, \ldots, \sigma_k)$,
    \[ \|A_{\sigma_k} \cdots A_{\sigma_1}\| = \sup_{x \neq 0} \frac{\|A_{\sigma_k} \cdots A_{\sigma_1}x\|}{\|x\|}. \]
    Consider a path such that the $i$th edge has label $\sigma_i$ for $i = 1, \ldots, k$
    and denote the intermediary nodes of that path as $v_0, v_1, \ldots, v_k$.
    For any $x \in \R^n$, we have
    \begin{align*}
      \|A_{\sigma_k} \cdots A_{\sigma_1}x\| %&
                                            \leq \alpha_{v_k} f_{v_k}(A_{\sigma_k} \cdots A_{\sigma_1} x)%\\
                                            %&
                                            \leq \alpha_{v_k} \gamub f_{v_{k-1}}(A_{\sigma_{k-1}} \cdots A_{\sigma_1} x)%\\
                                            %&
                                            \leq \alpha_{v_k} \gamub^{k} f_{v_0}(x)
    \end{align*}
    and
    \[ \|x\| \geq \beta_{v_0} p_{v_0}(x) \]
    hence
    \[ \|A_{\sigma_k} \cdots A_{\sigma_1}\| \leq \frac{\beta_{v_0}}{\alpha_{v_k}} \gamub^k \leq \frac{\beta}{\alpha} \gamub^k. \]
    Taking the $k$th root, the limit $k \to \infty$ and using \defref{cjsr} we obtain the result.
  \end{proof}
\end{theorem}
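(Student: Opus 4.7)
The plan is to show that any feasible solution $(\{f_v\}_{v\in \Nodes}, \gamub)$ of \progref{primalinf} yields a uniform upper bound $\|A_s\| \leq C \gamub^k$ on every admissible product, from which $\cjsrk \leq C^{1/k} \gamub$ follows immediately and the result comes out in the limit.

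First I would fix an arbitrary norm $\|\cdot\|$ on $\R^n$ and a feasible solution. Since each $f_v$ is homogeneous of degree one (it lies in $\F = \{h(f) : f \in \Fb\}$) and strictly positive in $\Fpp$, I would use compactness of the unit sphere $\Sn$ together with the strict positivity to obtain constants $0 < \alpha_v \leq \beta_v$ with $\alpha_v \|x\| \leq f_v(x) \leq \beta_v \|x\|$ for all $x \in \R^n$, and set $\alpha = \min_v \alpha_v$, $\beta = \max_v \beta_v$. This is the step I expect to be the delicate one, since members of $\F$ come from bounded measurable functions on $\Sn$ and strict positivity alone does not automatically guarantee a positive infimum; one needs to either invoke implicit continuity of members of $\Fpp$ or restrict attention to continuous Lyapunov candidates (which is harmless for the purpose of this bound).

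Next I would iterate the Lyapunov inequality \eqref{eq:primalinf1}. Given a $\G$-admissible tuple $(\sigma_1,\ldots,\sigma_k)$ realized by a path with intermediate nodes $v_0, v_1, \ldots, v_k$, a straightforward induction on the edges yields
\[
  f_{v_k}(A_{\sigma_k} \cdots A_{\sigma_1} x) \;\leq\; \gamub^{k}\, f_{v_0}(x)
\]
for every $x$. Sandwiching via the bilateral bounds from the previous step gives
\[
  \alpha\, \|A_{\sigma_k}\cdots A_{\sigma_1} x\| \;\leq\; f_{v_k}(A_{\sigma_k} \cdots A_{\sigma_1} x) \;\leq\; \gamub^{k}\, f_{v_0}(x) \;\leq\; \beta\, \gamub^{k}\, \|x\|.
\]
Dividing by $\alpha \|x\|$ and taking the supremum over $x \neq 0$ produces $\|A_{\sigma_k}\cdots A_{\sigma_1}\| \leq (\beta/\alpha)\, \gamub^{k}$, uniformly over admissible tuples of length $k$.

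Finally, extracting the $k$-th root yields $\cjsrk \leq (\beta/\alpha)^{1/k} \gamub$, and since $\beta/\alpha$ is a fixed positive constant the prefactor tends to $1$ as $k \to \infty$, giving $\lim_{k\to\infty} \cjsrk \leq \gamub$. Because $\gamub$ was an arbitrary feasible objective value, the infimum $\gamubopt$ is also an upper bound, which is exactly the claim. The only real obstacle is the regularity step ensuring the positive lower bound $\alpha_v$, which justifies why one works with strictly positive, essentially continuous, homogeneous Lyapunov functions in \progref{primalinf}.
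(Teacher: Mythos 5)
Your proposal is correct and follows essentially the same route as the paper: sandwich each $f_v$ between $\alpha_v\|x\|$ and $\beta_v\|x\|$ using compactness and strict positivity, iterate the Lyapunov inequality along an admissible path to get $\|A_{\sigma_k}\cdots A_{\sigma_1}\| \leq (\beta/\alpha)\gamub^k$, and take $k$-th roots. Your version in fact places the constants $\alpha_{v_k}$ and $\beta_{v_0}$ on the correct sides of the inequalities (the paper's displayed chain has them inverted, though its final bound is right), and your remark about needing continuity, not just strict positivity, to secure the lower bound $\alpha_v>0$ is a legitimate point the paper glosses over.
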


\begin{lemma}[No duality gap]
  \label{lem:duality}
  For a fixed $\gamma$,
  \begin{description}
    \item[Weak duality] If \progref{primalinf} (resp. \progref{dualinf}) is feasible for $\gamub = \gamma$ (resp. $\gamlb = \gamma$) then \progref{dualinf} (resp. \progref{primalinf}) is infeasible for all $\gamlb < \gamma$ (resp. $\gamub > \gamma$).
    \item[Strong duality] If \progref{primalinf} (resp. dual) is infeasible for $\gamub = \gamma$ (resp. $\gamlb = \gamma$) then \progref{dualinf} (resp. \progref{primalinf}) is feasible for $\gamlb = \gamma$ (resp. $\gamub = \gamma$).
  \end{description}

  In other words, there exists a value $\gamma^*$ such that
  for every $\gamma > \gamma^*$, there exists a feasible solution to \progref{primalinf} \progref{primalinf} and
  for every $\gamma < \gamma^*$, there exists a feasible solution to \progref{dualinf} \progref{dualinf}.
  Moreover, either \progref{primalinf} program, \progref{dualinf} program or both have a feasible solution with $\gamma = \gamma^*$.
  \begin{proof}
    Consider the hyperplane
    \[ C \eqdef \Big\{\, (f_v : v \in \Nodes) \in \F^{|\Nodes|} \, \Big| \sum_{v \in V} \int_{\Sn} f_v(x) \dif x = 1 \,\Big\} \]
    and the map
    %\[ C \colon \F^{|\Nodes|} \to \F^{|\Nodes|} : S \mapsto \Big\{\, (f_v : v \in \Nodes) \in S \, \Big| \sum_{v \in V} \int_{\Sn} f_v(x) \dif x = 1 \,\Big\}. \]
    \[ \mapc_\gamma \colon \F^{|\Nodes|} \to \F^{|\Arcs|} : (f_v : v \in \Nodes) \mapsto (\gamma f_u(x) - f_v(A_\sigma x) : (u,v,\sigma) \in \Arcs). \]

    Given a fixed $\gamma$,
    \progref{primalinf} has no solution for $\gamub = \gamma$
    if and only if $\mapc_{\gamma}(\Fpp^{|\Nodes|} \cap C) \cap \Fp^{|E|} = \emptyset$.
    Since $\Fpp^{|\Nodes|} \cap C$ is compact, so is $\mapc_{\gamma}(\Fpp^{|\Nodes|} \cap C))$.
    We know that a compact set and a closed set have no intersection if and only if there exist a strict separating hyperplane separating the two sets.
    That is, a measure $\mu \in \Fbd$ such that $\la \mu, f \ra \geq 0$ for all $f \in \Fp^{|E|}$ and $\la \mu, f \ra < 0$ for all $f \in \mapc_{\gamma}(\Fpp^{|\Nodes|} \cap C)$.
    The first condition is simply $\mu \in \Fbpd$.
    For the second condition, we remark that
    \[ \mapc_{\gamma}(\Fpp^{|\Nodes|} \cap C) = \mapc_{\gamma}(\inte(\Fp^{|\Nodes|}) \cap C) = \relint\mapc_{\gamma}(\Fp^{|\Nodes|} \cap C). \]
    We have $\la \mu, f \ra < 0$ for all $f \in \relint\mapc_{\gamma}(\Fp^{|\Nodes|} \cap C)$ if and only if
    $\la \mu, f \ra \leq 0$ for all $f \in \mapc_{\gamma}(\Fp^{|\Nodes|} \cap C)$ and
    \begin{equation}
      \label{eq:affinhyp}
      \exists f \in \mapc_{\gamma}(\Fp^{|\Nodes|} \cap C) : \la \mu, f \ra \neq 0.
    \end{equation}

    Therefore, if \progref{primalinf} has no solution for $\gamub = \gamma$ then there exists
    a \emph{nonzero} measure $\mu \in (\Fbpd)^{|E|}$ such that for all $f \in C$ and $(u,v,\sigma) \in E$,
    \begin{equation}
      \label{eq:dualconsfirst}
      \sum_{v \in V} \sum_{(v,u,\sigma) \in E} \bar{\gamma} \Exp_{vu\sigma}[f_v(x)] \leq \sum_{v \in V} \sum_{(u,v,\sigma) \in E} \Exp_{uv\sigma}[f_v(A_\sigma x))]
    \end{equation}
    and \eqref{eq:affinhyp} holds.

    Note that if the inequality \eqref{eq:dualconsfirst} is respected for some $f \in C$, it is also respected for $\lambda f$ for all $\lambda > 0$.
    So we can impose that the inequality should be respected for all $f \in \Fp^{|V|} \setminus \{0\}$.

    The constraint~\eqref{eq:dualconsfirst} must be true for all $f \in \Fp^{|V|} \setminus \{0\}$ so in particular in the case where there is a node $v \in V$ such that $f_u(x) = 0$ for all $u \neq v$.
    Therefore we must have
    \[ \gamma \sum_{(v,u,\sigma) \in E} \Exp_{vu\sigma}[f_v(x)] \leq \sum_{(u,v,\sigma) \in E} \Exp_{uv\sigma}[f_v(A_\sigma x))], \quad \forall f_v \in \Fp \]
    for all $v \in V$.
    This is \eqref{eq:dualinf1} so the strong duality is proven.

    To show the weak duality, we show that if there exists a dual solution $\mu$ for $\gamlb=\gamma$ then \eqref{eq:dualinf1} and \eqref{eq:affinhyp} are satisfied for all $\gamlb < \gamma$.
    We know that \eqref{eq:dualinf1} is satisfied for $\gamma$ so the constraint \eqref{eq:dualinf1} is also satisfied for any $\gamlb < \gamma$.
    %For \eqref{eq:affinhyp}, we consider \eqref{eq:dualinf1} and \eqref{eq:affinhyp} implies that with $f(x) = \|x\|$.
    Using \eqref{eq:dualconsfirst} and \eqref{eq:dualinf3} with $f_v(x) = \|x\|$ for all $v \in \Nodes$,
    we have $\la \mu, f \ra < 0$ for all $\gamlb < \gamma$.

    %Therefore we can define the notation $\ell^{A^*}$ such that $\ell^{A^*}(p(x)) = \ell(p(Ax))$ for all polynomial $p \in \R[x]_{\mathbf{2d}}$.
    % We can thus rewrite the inequality as
    % \[
    %   \sum_{(u,v,\sigma) \in E} \mathcal{A}_\sigma^*(\ell_{uv\sigma}) - \gamma^{2d} \sum_{(v,u,\sigma) \in E} \ell_{vu\sigma} \in \inte(\Soshs)
    % \]
    % for all $v \in V$.
    %
    % The constraint is thus rewritten as
    % \[
    %   \sum_{(u,v,\sigma) \in E} \mathcal{A}_\sigma^*(\pE_{uv\sigma}) - \gamma^{2d} \sum_{(v,u,\sigma) \in E} \pE_{vu\sigma} \in \inte(\Sigma_{\mathbf{2d}}^*)
    % \]
    % or alternatively, for node $v \in V$ and any polynomial $p \in \Sigma_{\mathbf{2d}} \setminus \{0\}$,
    % \begin{equation}
    %   \label{eq:dualinf}
    %   \sum_{(u,v,\sigma) \in E} \pE_{uv\sigma}[p(A_\sigma x)] > \gamma^{2d} \sum_{(v,u,\sigma) \in E} \pE_{uv\sigma}[p(x)].
    % \end{equation}
  \end{proof}
\end{lemma}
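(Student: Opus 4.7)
My plan is to prove the two directions separately, using a test-function pairing for weak duality and a separating-hyperplane argument for strong duality.

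For weak duality, I would take a primal-feasible pair $(f_v,\gamma)$ and a dual-feasible pair $(\mu_{uv\sigma},\gamma')$ and pair the $v$th dual constraint \eqref{eq:dualinf1} against the nonnegative Lyapunov function $f_v$, then sum over $v\in\Nodes$. Using the definition $\la f_v,\pushf{A_\sigma}{\mu_{uv\sigma}}\ra = \la f_v\circ A_\sigma,\mu_{uv\sigma}\ra$ together with the Lyapunov inequality $f_v(A_\sigma x)\le \gamma f_u(x)$, the left-hand side is at most $\gamma\sum_{(u,v,\sigma)\in\Arcs}\la f_u,\mu_{uv\sigma}\ra$, while the right-hand side (after re-indexing $(v,w,\sigma)$ into $(u,v,\sigma)$) equals $\gamma'\sum_{(u,v,\sigma)\in\Arcs}\la f_u,\mu_{uv\sigma}\ra$. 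Strict positivity of every $f_u$ on $\Sn$ combined with the normalization \eqref{eq:dualinf3} makes this common scalar strictly positive, so it divides out to give $\gamma'\le\gamma$. Taking $\gamma'$ arbitrarily close to the dual optimum proves that primal feasibility at $\gamma$ forbids dual feasibility at any $\gamlb<\gamma$, and symmetrically.

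For strong duality, I would follow a cone-program separation argument. Define the affine map
\[
\mathcal{M}_\gamma\colon (f_v)_{v\in\Nodes}\mapsto \bigl(\gamma f_u(x)-f_v(A_\sigma x)\bigr)_{(u,v,\sigma)\in\Arcs}
\]
and let $C=\{(f_v):\sum_v\int_{\Sn}f_v\dif x=1\}$. Primal infeasibility at $\gamma$ is exactly the statement that $\mathcal{M}_\gamma(\Fpp^{|\Nodes|}\cap C)$ is disjoint from the closed convex cone $\Fp^{|\Arcs|}$. Since the former set is convex and relatively open inside the affine slice $\mathcal{M}_\gamma(\Fp^{|\Nodes|}\cap C)$, I would invoke a Hahn--Banach separation theorem on the dual pair $(\F,\Fbd)$ to obtain a nonzero $\mu=(\mu_{uv\sigma})\in\Fbd^{|\Arcs|}$ with $\la\mu,g\ra\ge 0$ for all $g\in\Fp^{|\Arcs|}$ and $\la\mu,\mathcal{M}_\gamma(f)\ra\le 0$ for all $f\in\Fp^{|\Nodes|}\cap C$, with strict inequality somewhere. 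The first condition forces each $\mu_{uv\sigma}\in\Fbpd$. Testing the second against functions $f$ that are supported on a single node $v$ gives, for every such $f_v\in\Fp$,
\[
\gamma\sum_{(v,w,\sigma)\in\Arcs}\la f_v,\mu_{vw\sigma}\ra \le \sum_{(u,v,\sigma)\in\Arcs}\la f_v\circ A_\sigma,\mu_{uv\sigma}\ra,
\]
which is exactly the measure-form of \eqref{eq:dualinf1}. After normalizing via \eqref{eq:dualinf3} this yields a dual feasible solution at $\gamlb=\gamma$. The ``either/or/both'' statement at $\gamma^*$ then follows because weak duality forces the two feasible ranges to share at most one endpoint and strong duality prevents any gap.

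The hard part will be the functional-analytic hygiene of the separation step: the dual pair $(\F,\Fbd)$ of bounded measurable functions and finite signed Borel measures is not reflexive in the usual sense, so one must be careful about which topology makes $\Fp^{|\Arcs|}$ closed and which variant of Hahn--Banach applies. One must also verify that $\Fpp^{|\Nodes|}\cap C$ is the relative interior of $\Fp^{|\Nodes|}\cap C$ so that the two sets can be strictly separated (otherwise only weak separation is available, which would miss the normalization constraint). Handling this correctly — in particular, extracting genuine strict inequality against some element of $\mathcal{M}_\gamma(\Fp^{|\Nodes|}\cap C)$ so that $\gamma^*$ is attained on at least one of the two programs — is where the proof will require the most care.
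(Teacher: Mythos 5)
Your proposal is correct, and your strong-duality argument is essentially the paper's own: the same map $\mapc_\gamma$ and hyperplane $C$, the same reformulation of primal infeasibility as disjointness of $\mapc_\gamma(\Fpp^{|\Nodes|}\cap C)$ from $\Fp^{|\Arcs|}$, and the same separation-then-test-on-single-node-functions step to recover \eqref{eq:dualinf1}. The functional-analytic caveats you flag (which topology closes the cones, whether $\Fpp^{|\Nodes|}\cap C$ is really the relative interior of $\Fp^{|\Nodes|}\cap C$) are precisely the points the paper also leaves unargued --- indeed the paper simply asserts that $\Fpp^{|\Nodes|}\cap C$ is compact, which is at least as delicate as your open-convex-set separation. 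Where you genuinely diverge is weak duality: the paper deduces it from the same separation characterization, by showing that a dual solution at $\gamma$ furnishes a strictly separating functional for every smaller value, hence primal infeasibility there; you instead give the direct pairing argument, testing \eqref{eq:dualinf1} at node $v$ against $f_v$, using $\la f_v\circ A_\sigma,\mu_{uv\sigma}\ra\le\gamub\la f_u,\mu_{uv\sigma}\ra$, summing over $\Arcs$, and cancelling the common scalar $\sum_{(u,v,\sigma)\in\Arcs}\la f_u,\mu_{uv\sigma}\ra$, which is positive by \eqref{eq:dualinf3} together with strict positivity of the $f_u$. Your route is more elementary and self-contained and makes explicit where the normalization and the strict positivity of the Lyapunov functions enter; the paper's route buys only the economy of recycling the machinery already built for strong duality. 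One point to fix in the write-up: your computation yields $\gamlb\le\gamub$, i.e.\ primal feasibility at $\gamma$ excludes dual feasibility for $\gamlb>\gamma$ (and dual feasibility at $\gamma$ excludes primal feasibility for $\gamub<\gamma$), which is the orientation consistent with the ``in other words'' paragraph; your closing sentence echoes the opposite inequality as printed in the lemma's statement, so state the direction your estimate actually proves.
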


\section{The $p$-radius}
\label{app:pradius}
We extend the definition of the $p$-radius to the constrained case.
%It has many applications and it can be computed exactly for even $p$.
%The $p$-radius is a generalization of the joint spectral radius.
\begin{definition}[Constrained $p$-radius]
  The \emph{constrained $p$-radius} of
  \defAGe{}, denoted as $\cpr$,
  is
  \begin{equation*}
    \cpr =
    %\begin{cases}
    \lim_{k \to \infty} \left[|\Arcs_k|^{-1} \sum_{v \in V} \cprkv{k}{v}\right]^{\frac{1}{pk}}
%, & p < \infty\\
        %\cjsr, & p = \infty.
    %\end{cases}
  \end{equation*}
  where
  \begin{align*}
    \cprkv{k}{v} & =
    \sum_{s \in \Arcsout_k(v)} \|A_s\|^p.
  \end{align*}
  Thus, the CJSR can be defined as the constrained $p$-radius for $p = \infty$.
\end{definition}

%\begin{align*}
%  \cprkv{k}{v} & = \max_{v \in V}.
%\end{align*}

\begin{myrem}
  \label{rem:path}
  Since $\G$ is assumed to be strongly connected,
  we could give the following equivalent definition
  \begin{equation}
    \label{eq:cprk2}
    \cpr = \lim_{k \to \infty}
    \left[\max_{v \in \Nodes} [\dout_k(v)]^{-1} \cprkv{k}{v}\right]^{\frac{1}{pk}}
  \end{equation}
% where
% \[
%   \cprkv{k}{v} = \max_{v \in V} [\cprkv{k}{v}]^{\frac{1}{pk}}.
%   \left[
%     [\dout_k(v)]^{-1} \sum_{\substack{s \in \Arcs_k\\s(1)=v}} \|A_s\|^p
%   \right]^{\frac{1}{pk}}.
% \]
  or the same definition with ``$\din_k(v)$'' instead of ``$\dout_k(v)$'' and ``$s \in \Arcsin_k(v)$'' instead of ``$s \in \Arcsout_k(v)$'' in the definition of $\cprkv{k}{v}$.

  %Indeed, let $\alpha_v = [\dout_k(v)]^{-1} \cprkv{k}{v}]^{\frac{1}{pk}}$.
  %Indeed, there is at most a factor $|\Nodes|^{\frac{1}{k}}$ between
  %the two definitions but this factor tends to $1$ as $k \to \infty$.
\end{myrem}

By the equivalence of norms, the definition of the $p$-radius does not depend on the norm used.

We can show that the $p$-radius is well defined using the following classical result, known as \emph{Fekete's Lemma} \cite{fekete1923verteilung}.
\begin{lemma}
  \label{lem:fekete}
  Let $\{a_n\} : n \geq 1$ be a sequence of real numbers such that
  \[ a_{m + n} \leq a_m + a_n. \]
  Then the limit
  \[ \lim_{n \to \infty} \frac{a_n}{n} \]
  exists and is equal to $\inf\left\{\frac{a_n}{n}\right\}$.
\end{lemma}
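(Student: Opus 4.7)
The plan is to show the limit exists and equals $L \eqdef \inf_{n \geq 1} a_n/n$ by a standard division-with-remainder argument. Since $a_n/n \geq L$ for all $n$, we immediately have $\liminf_{n \to \infty} a_n/n \geq L$, so the whole content is to prove $\limsup_{n \to \infty} a_n/n \leq L$.

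First I would fix $\epsilon > 0$ and, using the definition of infimum, choose an index $m$ with $a_m/m < L + \epsilon$ (when $L = -\infty$, the same argument runs with $L + \epsilon$ replaced by an arbitrary $-M$). Then for any $n > m$, write $n = qm + r$ with $0 \leq r < m$ and $q = \lfloor n/m \rfloor$. Iterated application of subadditivity gives
\[
a_n = a_{qm + r} \leq q\, a_m + a_r,
\]
where for the case $r = 0$ the last term is simply absent (so a minor bookkeeping point is to either set $a_0 = 0$ by convention or treat the $r = 0$ case separately; this is the only step that needs a small care).

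Dividing by $n$,
\[
\frac{a_n}{n} \leq \frac{qm}{n}\cdot\frac{a_m}{m} + \frac{a_r}{n}.
\]
As $n \to \infty$ with $m$ fixed, we have $qm/n \to 1$ and, since $r$ ranges over the finite set $\{0, 1, \ldots, m-1\}$, the remainder term $a_r/n \to 0$. Taking $\limsup_{n \to \infty}$ therefore yields
\[
\limsup_{n \to \infty} \frac{a_n}{n} \leq \frac{a_m}{m} < L + \epsilon.
\]
Since $\epsilon > 0$ was arbitrary, $\limsup_{n \to \infty} a_n/n \leq L$, which combined with $\liminf_{n \to \infty} a_n/n \geq L$ gives the claim.

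The main (and only) obstacle is simply the bookkeeping at $r = 0$ and the unbounded case $L = -\infty$; both are trivial once noted, and the core inequality $a_{qm+r} \leq q a_m + a_r$ follows by an immediate induction from the subadditive hypothesis $a_{p+q} \leq a_p + a_q$.
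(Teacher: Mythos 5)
Your proof is correct, and it is the standard division-with-remainder argument for Fekete's subadditive lemma. The paper itself gives no proof of this statement --- it simply invokes it as a classical result with a citation to Fekete's 1923 paper --- so there is nothing to compare against; your write-up supplies exactly the argument the authors are implicitly relying on. The two points you flag (the $r=0$ convention and the $L=-\infty$ case) are indeed the only places requiring care, and your handling of both is fine; note also that the final $\limsup$ step is harmless regardless of the sign of $a_m$, since $qm/n \to 1$ forces $(qm/n)(a_m/m) \to a_m/m$ in either case.
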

\begin{lemma}
  \label{lem:feketemax}
  Consider \defAGe{} and
  the sequence $(a_k)_k = \max_{v \in V} \cprkv{k}{v}$ with a submultiplicative norm.
  The sequence $\sqrt[k]{a_k}$ converges when $k \to \infty$.
  Moreover,
  \[ \lim_{k \to \infty} \sqrt[k]{a_k} = \inf \{\sqrt[k]{a_k}\}. \]

  \begin{proof}
    By submultiplicativity, for any $v \in V$, $k$ and any $k_1,k_2 \geq 0$ such that $k_1 + k_2 = k$,
    \begin{align*}
      \cprkv{k}{v}
      & = \sum_{u \in V} \sum_{s_1 \in \Arcs_{k_1}(v,u),s_2\in \Arcsout_{k_2}(u)} \|A_{s_2}A_{s_1}\|^p\\
      & \leq \sum_{u \in V} \sum_{s_1 \in \Arcs_{k_1}(v,u),s_2\in \Arcsout_{k_2}(u)} \|A_{s_2}\|^p\|A_{s_1}\|^p\\
      & = \sum_{u \in V} \cprkv{k_2}{u} \sum_{s_1 \in \Arcsin_{k_1}(u),s_1(1)=v} \|A_{s_1}\|^p\\
      & \leq a_{k_2} \sum_{u \in V} \sum_{s_1 \in \Arcsin_{k_1}(u),s_1(1)=v} \|A_{s_1}\|^p\\
      & \leq \cprkv{k_1}{v} a_{k_2}
    \end{align*}
    hence, in particular, $a_k \leq a_{k_1} a_{k_2}$ and $\log a_k \leq \log a_{k_1} + \log a_{k_2}$.
    We can conclude by \lemref{fekete}.
  \end{proof}
\end{lemma}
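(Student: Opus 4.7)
The plan is to apply Fekete's Lemma (\lemref{fekete}) to the sequence $\log a_k$. Fekete's Lemma requires subadditivity $\log a_{k_1+k_2} \leq \log a_{k_1} + \log a_{k_2}$, which is equivalent to the submultiplicative bound $a_{k_1+k_2} \leq a_{k_1}\cdot a_{k_2}$. Once this bound is in hand, Fekete's Lemma immediately yields convergence of $\log a_k / k$ to $\inf_k \log a_k/k$, hence of $\sqrt[k]{a_k}$ to $\inf_k \sqrt[k]{a_k}$.

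The main work is therefore to prove the submultiplicative inequality for the sequence $a_k = \max_{v \in \Nodes} \cprkv{k}{v}$. First I would fix a vertex $v \in \Nodes$ achieving the maximum in $a_{k_1+k_2}$ and observe that any path $s \in \Arcsout_{k_1+k_2}(v)$ factors uniquely into an initial segment $s_1 \in \Arcs_{k_1}(v,u)$ for some intermediate vertex $u$, followed by a terminal segment $s_2 \in \Arcsout_{k_2}(u)$, with $A_s = A_{s_2} A_{s_1}$. Submultiplicativity of the matrix norm then yields $\|A_s\|^p \leq \|A_{s_1}\|^p \|A_{s_2}\|^p$. Summing over all such paths, grouping by the intermediate vertex $u$, and then bounding the inner sum $\cprkv{k_2}{u}$ by $a_{k_2}$ uniformly in $u$, produces the bound $\cprkv{k_1+k_2}{v} \leq \cprkv{k_1}{v} \cdot a_{k_2} \leq a_{k_1}\cdot a_{k_2}$. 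Taking the maximum over $v$ finishes the submultiplicativity.

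I do not anticipate a significant obstacle: the argument is essentially bookkeeping once the path decomposition is set up. The only technicality concerns the possibility that $a_k = 0$ for some $k$ (so that $\log a_k = -\infty$), but this case is harmless because the same submultiplicative argument then forces $a_{k'} = 0$ for all $k' \geq k$, making both sides of the claimed identity equal to $0$. The conceptual crux is simply that submultiplicativity of the operator norm, combined with the concatenation structure of $\G$-admissible paths, transfers to submultiplicativity of the aggregate quantities defining the constrained $p$-radius.
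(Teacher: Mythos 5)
Your proposal is correct and follows essentially the same route as the paper: decompose each outgoing path of length $k_1+k_2$ at an intermediate vertex, apply submultiplicativity of the norm, bound the tail sum uniformly by $a_{k_2}$, and invoke Fekete's Lemma on $\log a_k$. Your remark on the degenerate case $a_k=0$ is a small technicality the paper leaves implicit, but it does not change the argument.
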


\begin{corollary}
  \label{coro:summax}
  The following holds
  \[ \lim_{k \to \infty} \left[\max_{v \in V} \cprkv{k}{v}\right]^{\frac{1}{k}} = \lim_{k \to \infty} \left[\sum_{v \in V} \cprkv{k}{v}\right]^{\frac{1}{k}} \]
  and, in particular, the limit on the right-hand side converges.

  \begin{proof}
    For a finite set of nonnegative numbers, their maximum is always between their average and their sum:
    \[ \frac{1}{|V|} \sum_{v \in V} \cprkv{k}{v} \leq \max_{v \in V} \cprkv{k}{v} \leq \sum_{v \in V} \cprkv{k}{v} \]
    or equivalently
    \[ \max_{v \in V} \cprkv{k}{v} \leq \sum_{v \in V} \cprkv{k}{v} \leq |V| \max_{v \in V} \cprkv{k}{v}. \]
    By \lemref{feketemax}, $\max_{v \in V} \cprkv{k}{v}$ converges for $k \to \infty$ hence $\sum_{v \in V} \cprkv{k}{v}$ converges too.
    Taking the $k$th root and the limit $k \to \infty$ gives the identity.
  \end{proof}
\end{corollary}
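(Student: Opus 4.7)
The plan is to use the elementary two-sided bound that for any finite collection of nonnegative numbers, the maximum lies between the average and the sum. Concretely, I would write
\[
\max_{v \in V} \cprkv{k}{v} \;\leq\; \sum_{v \in V} \cprkv{k}{v} \;\leq\; |V| \cdot \max_{v \in V} \cprkv{k}{v},
\]
which holds term-by-term without any structural assumptions on the system. Taking $k$th roots preserves the inequalities because all quantities are nonnegative, giving
\[
\bigl[\max_{v \in V} \cprkv{k}{v}\bigr]^{1/k} \;\leq\; \bigl[\sum_{v \in V} \cprkv{k}{v}\bigr]^{1/k} \;\leq\; |V|^{1/k}\bigl[\max_{v \in V} \cprkv{k}{v}\bigr]^{1/k}.
\]

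Next I would invoke \lemref{feketemax}, which has already been established, to know that the left-hand side converges as $k \to \infty$. Since $|V|$ is a fixed finite constant, $|V|^{1/k} \to 1$. By the squeeze principle, the middle term is sandwiched between two sequences with the same limit, so it converges and its limit coincides with $\lim_{k\to\infty}[\max_v \cprkv{k}{v}]^{1/k}$. This simultaneously shows that the limit on the right-hand side exists, which is the secondary claim of the corollary.

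There is essentially no serious obstacle here: the entire argument rests on the finiteness of $V$ (so that the constant $|V|^{1/k}$ is harmless in the limit) and on having the convergence of the max variant already in hand from Fekete's lemma. The only thing to be mindful of is that I should not prematurely split the limit of a product before I know both factors converge; writing out the squeeze first and only then passing to the limit avoids this pitfall entirely.
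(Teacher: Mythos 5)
Your proposal is correct and follows essentially the same route as the paper: the two-sided bound $\max_v \cprkv{k}{v} \leq \sum_v \cprkv{k}{v} \leq |V|\max_v \cprkv{k}{v}$, the convergence of the max-variant from \lemref{feketemax}, and passage to the limit after taking $k$th roots (with $|V|^{1/k}\to 1$). Your explicit squeeze formulation is if anything slightly more careful than the paper's terse "taking the $k$th root and the limit," but there is no substantive difference.
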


\begin{lemma}
  \label{lem:rhoAG}
  Consider \defAG{}.
  The following relation holds
  \[
    \cpr = \rho(A(\G)) \lim_{k \to \infty}
    \left[
      \sum_{s \in \Arcs_k} \|A_s\|^p
    \right]^{\frac{1}{pk}}.
  \]
  \begin{proof}
    By \cororef{summax},
    \[
      \lim_{k \to \infty}
      \left[
        \sum_{s \in \Arcs_k} \|A_s\|^p
      \right]^{\frac{1}{pk}}
    \]
    converges.
    It remains to show that $\lim_{k \to \infty} |\Arcs_k|^{-\frac{1}{pk}}$ converges to $\rho(A(\G))$.

    Consider the matrix norm $\|\cdot\|_\infty$ on $\R^{n \times n}$ induced by the infinity norm on $\R^n$.
    It is well known that
    \[ \|A\|_{\infty} = \max_{1 \leq i \leq n} \sum_{j=1}^n |a_{ij}| \]
    where $a_{ij}$ is $(i,j)$ entry of $A$.
    It is also well known that the $(u,v)$ entry of $A(\G)^k$ gives                        % FIXME can remove
    the number of paths of length $k$ starting at node $u$ and ending at node $v$ in $\G$. % FIXME can remove
    Hence $\|A(\G)^k\|_\infty = \mdout_k(\G)$.
    By Gelfand's formula,
    \begin{equation}
      \label{eq:rhoAG}
      \rho(A(\G)) = \lim_{k \to \infty} \|A(\G)^k\|_\infty^{1/k}.
    \end{equation}
    Since $|E_k|/|V| \leq \mdout_k(\G) \leq |E_k|$,
    and $|V|^{1/k} \to 1$ as $k \to \infty$, we are done.
  \end{proof}
\end{lemma}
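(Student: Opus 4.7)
The plan is to identify $\sum_{v \in V} \cprkv{k}{v}$ with $\sum_{s \in \Arcs_k} \|A_s\|^p$ and then split the defining limit for $\cpr$ into a product of two factors: one involving the cardinality $|\Arcs_k|$, the other involving the matrix sum. The key combinatorial observation is that $\Arcs_k$ partitions as $\bigsqcup_{v \in V} \Arcsout_k(v)$, since every $\G$-admissible path of length $k$ has a unique starting node. This immediately rewrites the definition as
\[
\cpr = \lim_{k \to \infty} \left[|\Arcs_k|^{-1} \sum_{s \in \Arcs_k} \|A_s\|^p\right]^{1/(pk)},
\]
and the problem reduces to showing that the two factors $|\Arcs_k|^{-1/(pk)}$ and $[\sum_{s \in \Arcs_k} \|A_s\|^p]^{1/(pk)}$ each converge separately, so that the limit of their product factors.

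Next, I would appeal to \cororef{summax} to establish that $[\sum_v \cprkv{k}{v}]^{1/k}$ converges; raising to the power $1/p$ then yields convergence of $[\sum_{s \in \Arcs_k} \|A_s\|^p]^{1/(pk)}$. The more delicate factor is the asymptotic path count. Here the plan is to express $|\Arcs_k|$ through powers of the adjacency matrix $A(\G)$: by a classical combinatorial identity, the $(u,v)$ entry of $A(\G)^k$ counts length-$k$ paths from $u$ to $v$, so the row sums of $A(\G)^k$ are precisely the out-degrees $\dout_k(v)$, giving $\|A(\G)^k\|_\infty = \mdout_k(\G)$. By Gelfand's formula, $\mdout_k(\G)^{1/k} \to \rho(A(\G))$. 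Sandwiching $\mdout_k(\G) \leq |\Arcs_k| \leq |V| \cdot \mdout_k(\G)$ and using $|V|^{1/k} \to 1$ then gives the desired asymptotics for $|\Arcs_k|^{1/k}$, from which one reads off the limit of $|\Arcs_k|^{-1/(pk)}$.

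The main obstacle is bookkeeping rather than depth: one must verify that each factor converges in its own right before splitting the limit, since a priori we only know the product converges from the definition of $\cpr$. This is handled cleanly by \cororef{summax} on one side and by Gelfand's formula on the other. A secondary care is that the definition of $\cpr$ must be norm-independent, so that the submultiplicative norm required by \cororef{summax} and the induced $\|\cdot\|_\infty$ used in the adjacency argument are interchangeable; this follows from the standard equivalence of norms on $\R^{n \times n}$, which only contributes a multiplicative constant that is killed by the $k$th-root limit.
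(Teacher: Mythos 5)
Your proposal is correct and follows essentially the same route as the paper: both split the defining limit of $\cpr$ into the path-count factor and the matrix-sum factor, invoke \cororef{summax} for the convergence of the latter, and obtain the asymptotics of $|\Arcs_k|^{1/k}$ from the adjacency-matrix path count, Gelfand's formula, and the sandwich $|\Arcs_k|/|V| \leq \mdout_k(\G) \leq |\Arcs_k|$. The only addition is your closing remark on norm equivalence, which is harmless bookkeeping.
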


%In the unconstrained case we know that the $p$-radius is increasing in
%$p$ \cite{zhou2002p}.  We can generalize this fact as follows.
% \begin{lemma}
%   \label{lem:incp}
%   Consider \defAG{}.
%   For any integers $p \leq q$,
%   \[ \cpr[p] \leq \cpr[q] \leq \cjsr \leq \rho(A(\G))^{\frac{1}{q}}
%   \cpr[q] \leq \rho(A(\G))^{\frac{1}{p}} \cpr[p]. \]
% \end{lemma}

\begin{proof}[Proof of \lemref{incp}]
  The Lemma is a consequence of the inequality between ordinary means
  and the inequality between the $p$-norms ($p \geq 1$)
  \[ \|x\|_p = \Big(\sum_{i=1}^n |x_i|^p\Big)^{\frac{1}{p}}. \]
\end{proof}

\begin{lemma}[\cite{hardy1952inequalities}]
  For any nonnegative integers $a_1, \ldots, a_n$ and positive
  real numbers $p \leq q$,
  \[
    \left(\frac{1}{n}\sum_{i=1}^n a_i^p\right)^{\frac{1}{p}}
    \leq
    \left(\frac{1}{n}\sum_{i=1}^n a_i^q\right)^{\frac{1}{q}}
    \leq \max\{\, a_i \mid i = 1, \ldots, n\,\}.
  \]
\end{lemma}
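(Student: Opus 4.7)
The final statement is the classical power mean inequality from Hardy--Littlewood--Pólya, so the plan is to give the standard two-step proof: establish the left inequality via convexity (Jensen's inequality) and the right inequality via a trivial termwise bound.

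The plan is first to dispose of the upper bound $\bigl(\tfrac{1}{n}\sum a_i^q\bigr)^{1/q} \le \max_i a_i$. Let $M = \max_i a_i$. Since $a_i \ge 0$, we have $a_i^q \le M^q$ for each $i$, so $\tfrac{1}{n}\sum_i a_i^q \le M^q$, and taking the $q$-th root (which is monotone on $[0,\infty)$ since $q > 0$) gives the bound.

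For the main inequality $\bigl(\tfrac{1}{n}\sum a_i^p\bigr)^{1/p} \le \bigl(\tfrac{1}{n}\sum a_i^q\bigr)^{1/q}$, I would set $r = q/p \ge 1$ and apply Jensen's inequality to the convex function $\varphi(t) = t^{r}$ on $[0,\infty)$ with the uniform probability weights $1/n$ and the values $b_i = a_i^p \ge 0$. This yields
\[
\left(\frac{1}{n}\sum_{i=1}^n b_i\right)^{r} \le \frac{1}{n}\sum_{i=1}^n b_i^{r},
\]
i.e.\ $\bigl(\tfrac{1}{n}\sum a_i^p\bigr)^{q/p} \le \tfrac{1}{n}\sum a_i^q$. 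Raising both sides to the power $1/q > 0$ (again using monotonicity of $t \mapsto t^{1/q}$ on $[0,\infty)$) gives exactly the desired inequality.

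There is essentially no obstacle here: the only thing to be careful about is handling the edge case where some $a_i = 0$, which is fine because $\varphi(t) = t^{r}$ is continuous at $0$ and convex on all of $[0,\infty)$ for $r \ge 1$, and also the degenerate case where $r = 1$ (i.e.\ $p = q$) where the inequality is trivially an equality. Alternatively, if one prefers to avoid invoking Jensen, the same bound follows from Hölder's inequality with conjugate exponents $r = q/p$ and $r' = q/(q-p)$ applied to the vectors $(a_i^p)_i$ and $(1, \ldots, 1)$, which gives $\sum a_i^p \le (\sum a_i^q)^{p/q} \, n^{(q-p)/q}$; rearranging produces the desired form. Either route is a few lines.
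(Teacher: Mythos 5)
Your proof is correct: the termwise bound gives the right-hand inequality, and Jensen's inequality applied to the convex map $t \mapsto t^{q/p}$ (or equivalently H\"older) gives the power-mean inequality on the left. The paper offers no proof of this lemma at all—it simply cites Hardy--Littlewood--P\'olya—so your argument is exactly the standard one the citation points to, and there is nothing to compare beyond noting that your write-up correctly handles the edge cases ($p=q$ and $a_i=0$).
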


\begin{lemma}
  For any real numbers $1 \leq p \leq q$,
  \[
    \|x\|_\infty \leq \|x\|_q \leq \|x\|_p.
  \]
\end{lemma}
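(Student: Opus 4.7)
The plan is to handle the two inequalities separately; both reduce to elementary pointwise comparisons of $|x_i|^p$ with $|x_i|^q$.

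For the first inequality $\|x\|_\infty \leq \|x\|_q$, I would simply let $M = \max_i |x_i|$ and observe that $M^q = \max_i |x_i|^q \leq \sum_i |x_i|^q$. Taking $q$-th roots (which is monotone since $q \geq 1$) gives $\|x\|_\infty = M \leq \|x\|_q$.

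For the second inequality $\|x\|_q \leq \|x\|_p$, I would first dispose of the trivial case $x = 0$ and then, by homogeneity of both norms, normalize so that $\|x\|_p = 1$, i.e., $\sum_i |x_i|^p = 1$. In particular each $|x_i| \leq 1$, and since $q \geq p$ we have $|x_i|^q \leq |x_i|^p$ pointwise. Summing yields $\sum_i |x_i|^q \leq \sum_i |x_i|^p = 1$, and taking $q$-th roots gives $\|x\|_q \leq 1 = \|x\|_p$. Undoing the normalization recovers the general inequality.

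There is no real obstacle here; the only mild subtlety is making sure to invoke homogeneity to perform the normalization (the preceding power-mean lemma in the appendix has an explicit $1/n$ factor and so does not immediately give the $p$-norm inequality in this direction, so I would not appeal to it).
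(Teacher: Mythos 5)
Your argument is correct: the first inequality follows from $\max_i|x_i|^q \leq \sum_i|x_i|^q$, and the second from the normalization $\|x\|_p = 1$ together with the pointwise bound $|x_i|^q \leq |x_i|^p$ when $|x_i|\leq 1$ and $q \geq p$. The paper itself states this lemma without proof (it is a classical fact, implicitly covered by the citation to Hardy--Littlewood--P\'olya attached to the preceding power-mean lemma), so there is no authorial proof to compare against; your elementary derivation is the standard one and fills the gap correctly. Your side remark is also apt: the power-mean inequality stated just before (with the $1/n$ averaging factor) is monotone \emph{increasing} in the exponent, whereas the $p$-norms are monotone \emph{decreasing}, so it indeed cannot be invoked directly for the second inequality.
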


% The purpose of this choice is to have the following lemma.
% \begin{lemma}[{\cite[Lemma~3.5]{philippe2016stability}}]
%   \label{lem:Ad}
%   Consider \defAG{} and a positive integer $d$.
%   The following identity holds:
%   \[ \rho(\G, \A^{[d]}) = \cjsr^d. \]
% \end{lemma}

Let $x^{[d]}$ denote the \emph{scaled monomial} basis.
The elements of this basis are
\[ \frac{d!}{\alpha_1!\alpha_2!\cdots\alpha_n!} x_1^{\alpha_1} \cdots x_n^{\alpha_n}. \]
for each \ktups{n} of nonnegative integers $\alpha$ such that
$\alpha_1 + \cdots + \alpha_n = d$.
For this basis, $\|x^{[d]}\|_2 = \|x\|_2^d$ where $\|\cdot\|_2$ is the Euclidean norm.

For any matrix $A \in \R^{n \times n}$, the map $x \mapsto x^{[d]}$ induces an
associated map $A^{[d]} \in \R^{N_d \times N_d}$ which is the unique
matrix that satisfies $(Ax)^{[d]} = A^{[d]}x^{[d]}$.
We also denote $\A^{[d]} \eqdef \{A_1^{[d]}, \ldots, A_m^{[d]}\}$.

Since $\|Ax\|^{[d]} = \|A\|^{[d]}\|x\|^{[d]}$, we have the following
Lemma that is known in the unconstrained case or for the contrained
case with $p=\infty$.
\begin{lemma}
  \label{lem:pp}
  Consider \defAG{}, then
  \[ \cpr[p] = \rho_1(\G,\A^{[p]})^{\frac{1}{p}} \]
  and
  \[ \cjsr = \rho(\G,\A^{[p]})^{\frac{1}{p}}. \]
\end{lemma}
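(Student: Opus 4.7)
The plan is to derive both identities from a single clean computation: the scaled monomial lift satisfies $\|A^{[p]}\|_2 = \|A\|_2^p$ exactly (not merely up to constants), so both the $p$-radius and the CJSR translate termwise between $\A$ and $\A^{[p]}$.

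First I would record the isometry $\|x^{[d]}\|_2 = \|x\|_2^d$ already noted in the excerpt, which is just the multinomial identity $(\sum_i x_i^2)^d = \sum_{|\alpha|=d} \binom{d}{\alpha}\prod_i x_i^{2\alpha_i}$ read off in the scaled monomial basis. From this and the multiplicativity $(BC)^{[d]} = B^{[d]} C^{[d]}$, the map $U \mapsto U^{[d]}$ sends orthogonal matrices to orthogonal matrices: for any $y = Ux$ one has $\|U^{[d]} x^{[d]}\|_2 = \|y^{[d]}\|_2 = \|y\|_2^d = \|x\|_2^d = \|x^{[d]}\|_2$ on the Veronese variety, and since symmetric powers of an orthogonal matrix acting on the full symmetric-power space agree with this extension, $U^{[d]}$ is orthogonal.

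Next, taking an SVD $A = U\Sigma V^T$ with $\Sigma = \diag(\sigma_1,\dots,\sigma_n)$, multiplicativity yields $A^{[p]} = U^{[p]} \Sigma^{[p]} (V^{[p]})^T$, which is an SVD of $A^{[p]}$ because $U^{[p]}, V^{[p]}$ are orthogonal and $\Sigma^{[p]}$ is diagonal with entries $\prod_i \sigma_i^{\alpha_i}$, $|\alpha| = p$. The largest such entry is $\sigma_1^p$, giving $\|A^{[p]}\|_2 = \sigma_1^p = \|A\|_2^p$. Applied along any path, $\|A_s^{[p]}\|_2 = \|(A_s)^{[p]}\|_2 = \|A_s\|_2^p$.

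Finally, I would substitute this identity termwise into the definitions. For the JSR part,
\[
  \cjsr^p = \lim_{k\to\infty} \max_{s \in \G_k}\|A_s\|_2^{p/k} = \lim_{k\to\infty} \max_{s \in \G_k}\|A_s^{[p]}\|_2^{1/k} = \rho(\G, \A^{[p]}),
\]
and for the $p$-radius part,
\[
  \cpr[p]^{\,p} = \lim_{k\to\infty} \Big[|\Arcs_k|^{-1}\sum_{s \in \Arcs_k} \|A_s\|_2^{p}\Big]^{1/k} = \lim_{k\to\infty} \Big[|\Arcs_k|^{-1}\sum_{s \in \Arcs_k} \|A_s^{[p]}\|_2\Big]^{1/k} = \rho_1(\G, \A^{[p]}),
\]
taking the $p$-th root in each case gives the two claimed identities. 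There is no real obstacle here: the only nontrivial step is verifying $\|A^{[p]}\|_2 = \|A\|_2^p$ via SVD, after which both formulas reduce to unwinding the definitions. The scaled monomial basis is essential, as an unscaled basis would replace the exact identity by an equivalence, forcing a less transparent argument through norm equivalence and subadditive-limit extraction (which would still work, but would be unnecessary overhead).
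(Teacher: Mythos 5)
Your proposal is correct and follows the same route the paper takes: the paper justifies \lemref{pp} with the single remark that the scaled-monomial lift is norm-multiplicative, and your argument simply supplies the details of that remark (the SVD computation showing $\|A^{[p]}\|_2=\|A\|_2^p$ exactly, which is genuinely needed since norm-preservation on the Veronese variety alone only gives one inequality) before unwinding the definitions termwise. The only cosmetic point is that the isometry $\|x^{[d]}\|_2=\|x\|_2^d$ requires the basis coefficients to be the \emph{square roots} of the multinomial coefficients, which is what your multinomial-identity computation implicitly assumes.
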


We say that a cone $\Kset$ is \emph{proper} if it is closed, solid, convex and pointed.
We say that a matrix $A$ \emph{leaves a set $S$ invariant} if $AS \subseteq S$ and
we say that a set of matrices $\A$ \emph{leaves a proper cone invariant} if there
exists a proper cone $\Kset$ such that each matrix of $\A$ leaves $\Kset$ invariant.

\begin{lemma}[{\cite{blondel2005computationally,protasov1997generalized}}]
  \label{lem:unconstrainedinv}
  If a set of $m$ matrices leaves a proper cone $\Kset$ invariant, then
  \begin{align*}
    \rho_1(\A)
    & = \frac{1}{m}\lim_{k\to\infty} \left\|\sum_{s \in [m]^k}A_s\right\|^{\frac{1}{k}}\\
    & = \frac{1}{m}\rho\left(\sum_{A \in \A}A\right).
  \end{align*}
\end{lemma}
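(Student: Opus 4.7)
The plan is to split the two claimed equalities: the second one follows from Gelfand's formula applied to a single matrix, while the first requires the cone hypothesis.  First I would observe that, by distributivity of matrix multiplication over addition,
\[
  \sum_{s \in [m]^k} A_s \;=\; \sum_{\sigma_1, \ldots, \sigma_k \in [m]} A_{\sigma_k} \cdots A_{\sigma_1} \;=\; \Bigl(\sum_{A \in \A} A\Bigr)^{\!k}.
\]
Applying Gelfand's formula $\lim_k \|B^k\|^{1/k} = \rho(B)$ with $B = \sum_A A$ immediately yields the second equality $\tfrac{1}{m}\lim_k \|\sum_{s} A_s\|^{1/k} = \tfrac{1}{m}\rho(\sum_A A)$, with no use of the cone.

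For the first equality, unwinding the definition gives $\rho_1(\A) = \tfrac{1}{m}\lim_k \bigl(\sum_{s \in [m]^k}\|A_s\|\bigr)^{1/k}$, so the goal is to prove
\[
  \lim_{k\to\infty} \Bigl(\sum_{s\in[m]^k} \|A_s\|\Bigr)^{\!1/k}
  \;=\; \lim_{k\to\infty}\Bigl\|\sum_{s\in[m]^k} A_s\Bigr\|^{1/k}.
\]
The $\geq$ direction is immediate from the triangle inequality $\|\sum_s A_s\|\leq \sum_s \|A_s\|$. The $\leq$ direction is where the invariant cone enters. My plan is to fix an interior point $v \in \inte(\Kset)$ and a strictly positive linear functional $\phi \in \inte(\Kset^*)$, and establish two auxiliary estimates with constants independent of $k$ and of the particular word $s$:
(a) for every $y \in \Kset$, $\|y\| \leq c_1\,\phi(y)$, which follows by compactness of $\Kset \cap \{\|y\|=1\}$ and strict positivity of $\phi$ on $\Kset \setminus \{0\}$;
(b) for every matrix $A$ with $A\Kset \subseteq \Kset$, $\|A\| \leq c_2\,\|Av\|$.

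The step (b) is the main technical obstacle. I plan to prove it by choosing $r > 0$ with $B(v,r) \subseteq \Kset$ and writing any unit vector $x$ as the balanced difference $x = \tfrac{1}{2r}\bigl((v+rx)-(v-rx)\bigr)$. Because $v \pm rx \in \Kset$, their images $A(v \pm rx)$ also lie in $\Kset$, so (a) applies to each term: $\|A(v\pm rx)\| \leq c_1\,\phi(A(v\pm rx))$. Adding and using $\phi(A(v+rx)) + \phi(A(v-rx)) = 2\phi(Av)$ gives $\|Ax\| \leq (c_1/r)\,\phi(Av) \leq (c_1\|\phi\|/r)\,\|Av\|$, uniform in $x$ and in $A$ (subject to the invariance). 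Chaining (a) and (b), together with linearity of $\phi$, yields
\[
  \sum_{s\in[m]^k}\|A_s\| \;\leq\; c_2 \sum_{s\in[m]^k}\|A_s v\| \;\leq\; c_1 c_2 \sum_{s\in[m]^k}\phi(A_s v) \;=\; c_1 c_2\, \phi\bigl((\textstyle\sum_A A)^k v\bigr) \;\leq\; c_3\Bigl\|\bigl(\textstyle\sum_A A\bigr)^{\!k}\Bigr\|
\]
with constants independent of $k$. Taking the $k$-th root and letting $k\to\infty$ absorbs the constants into $1$ and closes the inequality, proving the first equality and thus the lemma.
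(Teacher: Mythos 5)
The paper does not actually prove this lemma: it is quoted as a known result from \cite{blondel2005computationally,protasov1997generalized}, so there is no internal proof to compare against. Your self-contained argument is correct and is essentially the standard cone-duality proof underlying those references. The identity $\sum_{s\in[m]^k}A_s=(\sum_{A\in\A}A)^k$ plus Gelfand's formula settles the second equality, and the invariant cone enters exactly where it must, to reverse the triangle inequality. Both auxiliary estimates are sound: a functional $\phi\in\inte(\Kset^*)$ is bounded below on the compact set $\Kset\cap\{\|y\|=1\}$, giving (a), and the balanced decomposition $x=\tfrac{1}{2r}((v+rx)-(v-rx))$ with $v\pm rx\in\Kset$ gives $\|A\|\le c\,\phi(Av)$ uniformly over all matrices leaving $\Kset$ invariant, which every product $A_s$ does. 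The resulting two-sided sandwich $\|\sum_s A_s\|\le\sum_s\|A_s\|\le c_3\|\sum_s A_s\|$ with $k$-independent constants also establishes existence of the limit defining $\rho_1(\A)$ as a by-product, so the proof is complete.
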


% This lemma is a consequence of the following lemma.
% \begin{lemma}[{\cite[Lemma~2]{blondel2005computationally}}]
%   \label{lem:normK}
%   Associated to any proper cone $K$ there is a matrix $\|\cdot\|_K$ that satisfies
%   $\|A\|_K\leq\|A+B\|_K$ for all matrices $A$ and $B$ that leave the cone $K$ invariant.
% \end{lemma}

We deduce the following corollary of \lemref{pp} and
\lemref{unconstrainedinv}.
\begin{corollary}
  If $\A^{[p]}$ leaves a proper cone $\Kset$ invariant, then
  \begin{align*}
    \rho_p(\A)
    & = \frac{1}{m^{\frac{1}{p}}}\lim_{k\to\infty} \left\|\sum_{s \in [m]^k}A_s^{[p]}\right\|^{\frac{1}{pk}}\\
    & = \frac{1}{m^{\frac{1}{p}}}\rho\left(\sum_{A \in \A}A^{[p]}\right)^{\frac{1}{p}}.
  \end{align*}
\end{corollary}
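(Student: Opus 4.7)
The plan is to read off the corollary as a direct two-step substitution: first convert $\rho_p(\A)$ into a $1$-radius of the lifted family $\A^{[p]}$ via the preceding lemma, and then apply the invariant-cone formula for $\rho_1$ to that lifted family.

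Concretely, I would first invoke \lemref{pp} in its unconstrained form (the automaton has a single node with $m$ self-loops) to write
\[
  \rho_p(\A) \;=\; \rho_1(\A^{[p]})^{\frac{1}{p}}.
\]
Since by hypothesis $\A^{[p]}$ leaves a proper cone $\Kset$ invariant and consists of $m$ matrices (the map $A \mapsto A^{[p]}$ is one such lift per original matrix), \lemref{unconstrainedinv} applies verbatim to $\A^{[p]}$ and yields
\[
  \rho_1(\A^{[p]}) \;=\; \frac{1}{m}\lim_{k\to\infty} \Bigl\|\sum_{s \in [m]^k} A_s^{[p]}\Bigr\|^{\frac{1}{k}}
  \;=\; \frac{1}{m}\,\rho\Bigl(\sum_{A \in \A} A^{[p]}\Bigr).
\]
Here I use the identity $(A_{\sigma_k}\cdots A_{\sigma_1})^{[p]} = A_{\sigma_k}^{[p]}\cdots A_{\sigma_1}^{[p]}$, which is immediate from the multiplicativity $(AB)^{[p]}=A^{[p]}B^{[p]}$ of the lift, so that $A_s^{[p]}$ on the right-hand side is unambiguous.

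Finally I would raise both expressions to the power $1/p$ and pull the scalar $1/m$ outside the root, using continuity of $t \mapsto t^{1/p}$ on $[0,\infty)$ to commute it past the limit. This gives the two claimed equalities
\[
  \rho_p(\A) \;=\; \frac{1}{m^{1/p}}\lim_{k\to\infty}\Bigl\|\sum_{s \in [m]^k} A_s^{[p]}\Bigr\|^{\frac{1}{pk}}
  \;=\; \frac{1}{m^{1/p}}\,\rho\Bigl(\sum_{A \in \A} A^{[p]}\Bigr)^{\frac{1}{p}}.
\]
There is no real obstacle here; the only thing worth checking carefully is that the hypothesis ``$\A^{[p]}$ leaves a proper cone invariant'' is exactly what \lemref{unconstrainedinv} requires of the set of matrices whose $1$-radius is being computed, so no additional assumption is needed beyond what is already stated.
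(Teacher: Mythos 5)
Your proposal is correct and is exactly the route the paper takes: the paper simply states that the corollary is deduced from \lemref{pp} (specialized to the unconstrained, single-node automaton) and \lemref{unconstrainedinv} applied to the lifted family $\A^{[p]}$, and your substitution of $\rho_p(\A) = \rho_1(\A^{[p]})^{1/p}$ followed by the invariant-cone formula and the $1/p$-th root fills in precisely those steps. The scaling by $1/m^{1/p}$ and the use of $(AB)^{[p]} = A^{[p]}B^{[p]}$ are both handled correctly.
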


% In view of the definition of $\rho_p$, this corollary seems to say
% that if $\A^{[p]}$ leaves a proper cone invariant,
% taking the norm of each $A_s^{[p]}$ or taking the norm of the sum does
% not make any difference.

We generalize it to the constrained case using the lifting procedure
introduced independantly by Kozyakin~\cite{kozyakin2014berger} and Wang~\cite{wang2014stability}.
% Before that we provide the following lemma.
% \begin{lemma}
%   Given
%   Consider \defAGne{}.
%   Consider a vector norm $\|\cdot\|$ of $\R^n$ and
%   the vector norm $\|\cdot\|'$ of $\R^{n|V|}$ such that
%   \[ \|e_1 \kron x_1 + \cdots + e_{|V|} \kron x_{|V|}\| = \|x_1\| + \cdots + \|x_{|V|}\|. \]
%   Consider the induced matrix norms $\|\cdot\|$ and $\|\cdot\|'$.
% % For any $k$, the following holds
% % \[ \sum_{s \in E_k} \|A_s\| = \sum_{s \in E_k} \|A_s\|' \]
% \end{lemma}

\begin{lemma}
  \label{lem:koz}
  Consider \defAGe{}.
  The following identity holds for any $p \in [1, +\infty]$
  \[
    \sum_{s \in \Arcs_k} \|A_s\|^p =
    \sum_{s \in \Arcs^k} [\|A_s'\|']^p
  \]
  where
  \[ \A' = \{\, A'_{uv\sigma} = (e_ve_u^\Tr) \kron A_\sigma \mid (u,v,\sigma) \in E \,\}. \]
  \begin{proof}
    Consider a vector norm $\|\cdot\|$ of $\R^n$ and
    the vector norm $\|\cdot\|'$ of $\R^{n|V|}$ such that
    \[ \|e_1 \kron x_1 + \cdots + e_{|V|} \kron x_{|V|}\| = \|x_1\| + \cdots + \|x_{|V|}\|. \]
    Consider the induced matrix norms $\|\cdot\|$ and $\|\cdot\|'$.
    It is easy to see that for any nodes $u,v \in V$ and any matrix $B \in \R^{n \times n}$,
    $\|(e_ve_u^\Tr) \kron B\|' = \|B\|$.
    In particular, given a path $s \in \Arcs_k$,
    \begin{align*}
      \|A_s'\|' & = \left\|\prod_{i=1}^k(e_{s(i+1)}e_i^\Tr) \kron A_{s[i]}\right\|'\\
      & = \|(e_{s(k+1)}e_{s(1)}^\Tr) \kron A_s\|' = \|A_s\|
    \end{align*}
    and given $s \notin \Arcs_k$, $\|A_s'\| = 0$.
%   Consider the matrix norm $\|\cdot\|_K$ of \lemref{normK}
%   For any $k$, by subadditivity of the norm,
%   \[ \left\|\sum_{s \in E_k} A_s\right\|_K \leq \sum_{s \in E_k} \|A_s\|_K. \]
%   For any induced norm $\|\cdot\|$, have the following
%   \[ \sum_{s \in E_k} \|A_s\| = \sum_{s \in E_k} \|(e_{s(k+1)}e_{s(1)}^\Tr) \kron A_s\|' \]
%   and by \lemref{normK},
%   \[ |E_k|\left\|\sum_{s \in E_k} A_s\right\|_K \geq \sum_{s \in E_k} \|(e_{s(k+1)}e_{s(1)}^\Tr) \times A_s\|_K.  \]
  \end{proof}
\end{lemma}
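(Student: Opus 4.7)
The plan is to reduce everything to two elementary observations about the block structure: (i) that with the right block norm on $\R^{n|V|}$, the factor $(e_v e_u^\Tr)$ in the Kronecker product is norm-preserving, and (ii) that products of the form $A'_{u_1v_1\sigma_1}\cdots A'_{u_kv_k\sigma_k}$ telescope cleanly when the edges form a path and vanish otherwise. This last fact is what turns a sum over $\Arcs^k$ into the desired sum over $\Arcs_k$.

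First I would equip $\R^{n|V|}$ with the block $\ell_1$ norm $\|e_1 \kron x_1 + \cdots + e_{|V|} \kron x_{|V|}\|' = \|x_1\| + \cdots + \|x_{|V|}\|$ and verify that for any $u,v \in V$ and $B \in \R^{n \times n}$ the induced operator norm satisfies $\|(e_v e_u^\Tr) \kron B\|' = \|B\|$. This is straightforward: applying such a block-rank-one operator to $x = \sum_w e_w \kron x_w$ gives $e_v \kron Bx_u$, so $\|(e_ve_u^\Tr) \kron B)x\|' / \|x\|' = \|Bx_u\|/\sum_w \|x_w\|$, which is at most $\|B\|$ and achieves equality by concentrating $x$ on block $u$ at a maximizer of $B$.

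Next I would use the mixed-product rule $(A\kron B)(C\kron D) = AC \kron BD$ to compute, for a sequence of edges $s = (s[1], \ldots, s[k]) \in \Arcs^k$,
\[
A_s' = \prod_{i=1}^{k} \bigl((e_{s(i+1)}e_{s(i)}^\Tr) \kron A_{s[i]}\bigr) = \Bigl(\prod_{i=1}^{k} e_{s(i+1)}e_{s(i)}^\Tr\Bigr) \kron A_s,
\]
where the scalar-matrix part telescopes to $e_{s(k+1)}e_{s(1)}^\Tr$ \emph{only if} the head of each edge matches the tail of the next, i.e. the edge sequence forms a valid path in $\G$; otherwise some $e_{s(i)}^\Tr e_{s(i+1)}$ inner product vanishes and the whole product is zero. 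So $A_s'=0$ for $s \in \Arcs^k \setminus \Arcs_k$ and $A_s' = (e_{s(k+1)}e_{s(1)}^\Tr) \kron A_s$ for $s \in \Arcs_k$.

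Combining the two observations, for $s \in \Arcs_k$ I get $\|A_s'\|' = \|A_s\|$ and all other terms in $\sum_{s \in \Arcs^k}[\|A_s'\|']^p$ are zero, which yields the desired equality. The only step that needs any care is the norm identity in the first paragraph; everything else is Kronecker bookkeeping, and the main conceptual obstacle is really just choosing the \emph{right} block norm so that the rank-one block factor neither inflates nor deflates $B$. A naive induced norm (e.g.\ inherited from the $\ell_2$ norm on blocks) would not give an exact identity for arbitrary underlying $\|\cdot\|$, so the block-$\ell_1$ choice is essential.
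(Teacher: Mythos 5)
Your proposal is correct and takes essentially the same route as the paper's proof: the block $\ell_1$ norm on $\R^{n|V|}$, the identity $\|(e_ve_u^\Tr)\kron B\|'=\|B\|$, and the telescoping of the Kronecker products so that non-path sequences contribute zero; you merely spell out the verification of the norm identity and the mixed-product computation that the paper leaves implicit. One small quibble with your closing remark: the block-$\ell_1$ choice is not actually essential, since $(e_ve_u^\Tr)\kron B$ reads only block $u$ and writes only block $v$, so any block $\ell_q$ combination of the $\|\cdot\|$-norms of the blocks would give the same exact identity; this does not affect the correctness of your argument.
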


% We say that a set of matrices $\A$ contrained by an automaton $\G(\Nodes,\Arcs)$ \emph{leaves proper cones invariant} if there
% exists a proper cone $K_v$ at each node $v \in \Nodes$ such that each edge $(u,v,\sigma) \in \Arcs$, $A_\sigma K_u \subseteq K_v$.
%It is easy to see that if the constrained matrices leave the proper cones $K_v$ invariant then
%the set of matrices $\A'$ of \lemref{koz} leaves the proper cone $\prod_{v \in \Nodes} K_v$ invariant.
It is easy to see that if $\A$ leaves the proper cone $\Kset$ invariant then
the set of matrices $\A'$ of \lemref{koz} leaves the proper cone $\Kset^{|\Nodes|}$ invariant.

\begin{lemma}
  \label{lem:constrainedinv}
  Consider \defAGe{}.
  If $\A$ leaves a proper cone invariant, then
  \begin{align*}
    \cpr[1]
    & = \frac{1}{[\rho(A(G))]^{\frac{1}{p}}}\lim_{k\to\infty} \left\|\sum_{s \in \Arcs_k}(e_{s(k+1)}e_{s(1)}^\Tr) \kron A_s\right\|^{\frac{1}{k}}\\
    & = \frac{1}{[\rho(A(G))]^{\frac{1}{p}}}\rho\left(\sum_{(u,v,\sigma) \in E}(e_ve_u^\Tr) \kron A_\sigma\right).
  \end{align*}
  \begin{proof}
    Combine \lemref{rhoAG}, \lemref{koz} and \lemref{unconstrainedinv}.
  \end{proof}
\end{lemma}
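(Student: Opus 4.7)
The plan is to carry out exactly the combination suggested in the one-line proof sketch, in three steps. Throughout I take $p=1$ to match the $\cpr[1]$ on the left-hand side of the statement.

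First I would use the hypothesis that $\A$ preserves a proper cone $\Kset$ together with the observation made just before the statement, namely that the lifted set $\A'=\{A'_{uv\sigma}=(e_ve_u^\Tr)\kron A_\sigma:(u,v,\sigma)\in E\}$ then preserves the proper cone $\Kset^{|\Nodes|}\subset\R^{n|\Nodes|}$; this is exactly the hypothesis required for \lemref{unconstrainedinv} to apply to the family $\A'$. A direct computation, already carried out inside the proof of \lemref{koz}, shows that for $s\in\Arcs^k$ the product $A'_s$ equals $(e_{s(k+1)}e_{s(1)}^\Tr)\kron A_s$ when $s\in\Arcs_k$ and vanishes otherwise, so
\[
\sum_{s\in\Arcs^k}A'_s \;=\; \sum_{s\in\Arcs_k}(e_{s(k+1)}e_{s(1)}^\Tr)\kron A_s .
\]

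Next I specialise \lemref{koz} to $p=1$ to obtain $\sum_{s\in\Arcs_k}\|A_s\|=\sum_{s\in\Arcs^k}\|A'_s\|'$, and then invoke \lemref{unconstrainedinv} on the $|\Arcs|$-element cone-preserving family $\A'$. Unpacking the definition of $\rho_1$ used there, its two equalities together assert that
\[
\lim_{k\to\infty}\Bigl[\sum_{s\in\Arcs^k}\|A'_s\|'\Bigr]^{1/k}
\;=\;
\lim_{k\to\infty}\Bigl\|\sum_{s\in\Arcs^k}A'_s\Bigr\|^{1/k}
\;=\;
\rho\Bigl(\sum_{(u,v,\sigma)\in E}(e_ve_u^\Tr)\kron A_\sigma\Bigr).
\]
Chaining this with the previous display already shows that the limit appearing in the first line of the conclusion coincides with the spectral radius in the second line.

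Finally I apply \lemref{rhoAG} at $p=1$ to write $\cpr[1]=\rho(A(\G))^{-1}\lim_k\bigl[\sum_{s\in\Arcs_k}\|A_s\|\bigr]^{1/k}$, which by the previous chain equals $\rho(A(\G))^{-1}$ times either of the two quantities in the big display, agreeing with $\rho(A(\G))^{-1/p}$ at $p=1$ as required by the statement. This establishes both equalities at once. The whole argument is essentially bookkeeping, and the single conceptual step is transferring cone invariance from $\Kset$ to $\Kset^{|\Nodes|}$; I foresee no genuine obstacle beyond that, so the ``hard part'' really is just recognising that the lifting of \lemref{koz} is exactly what turns the constrained problem into an unconstrained one to which \lemref{unconstrainedinv} can be applied.
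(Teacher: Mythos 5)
Your proposal is correct and follows exactly the route the paper intends: transfer cone invariance to the lifted family via the construction in \lemref{koz}, apply \lemref{unconstrainedinv} to that family, and normalize with \lemref{rhoAG}. You also sensibly resolve the statement's $1/p$ exponent by specializing to $p=1$, which matches the paper's intent (the general-$p$ version is deferred to \theoref{norminout} via the $[p]$-lift).
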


\begin{theorem}
  \label{theo:norminout}
  Consider \defAG{}.
  If $\A^{[p]}$ leaves a proper cone invariant,
  the following identities hold
  \begin{align*}
    \cpr
    & = \frac{1}{[\rho(A(G))]^{\frac{1}{p}}}\lim_{k\to\infty} \left\|\sum_{s \in \Arcs_k}(e_{s(k+1)}e_{s(1)}^\Tr) \kron A_s^{[p]}\right\|^{\frac{1}{pk}}\\
    & = \frac{1}{[\rho(A(G))]^{\frac{1}{p}}}\rho\left(\sum_{(u,v,\sigma) \in E}(e_ve_u^\Tr) \kron A_\sigma^{[p]}\right)^{\frac{1}{p}}.
  \end{align*}
% \begin{equation*}
%   \lim_{k \to \infty}
%   \left[
%     |\Arcs_k|^{-1} \sum_{s \in \Arcs_k} \|A_s^{[p]}\|
%   \right]^{\frac{1}{pk}}
%   = \lim_{k \to \infty}
%   \left[
%     |\Arcs_k|^{-1} \sum_{s \in \Arcs_k} \|A_s\|^p
%   \right]^{\frac{1}{pk}}
%   = \cpr
%   = \lim_{k\to\infty} \left\||\Arcs_k|^{-1} \sum_{s \in \Arcs_k}A_s^{[p]}\right\|^{\frac{1}{pk}}
% \end{equation*}
% or equivalently
% \begin{equation*}
%   \lim_{k \to \infty}
%   \left[
%     \sum_{s \in \Arcs_k} \|A_s^{[p]}\|
%   \right]^{\frac{1}{pk}}
%   = \lim_{k \to \infty}
%   \left[
%     \sum_{s \in \Arcs_k} \|A_s\|^p
%   \right]^{\frac{1}{pk}}
%   = \rho(A(\G))\cpr
%   = \lim_{k\to\infty} \left\|\sum_{s \in \Arcs_k}A_s^{[p]}\right\|^{\frac{1}{pk}}.
% \end{equation*}
\end{theorem}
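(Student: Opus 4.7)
The plan is to deduce this theorem by composing two reductions already established in the appendix: the Veronese lifting identity of \lemref{pp}, which expresses $\cpr$ as the $p$th root of the constrained $1$-radius of the lifted system $(\G,\A^{[p]})$, and the Perron-type formula of \lemref{constrainedinv}, which provides a spectral-radius closed form for the constrained $1$-radius whenever the underlying matrices leave a proper cone invariant.

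More precisely, by \lemref{pp} we have $\cpr = \rho_1(\G,\A^{[p]})^{1/p}$, so it suffices to prove the two equalities
\[
\rho_1(\G,\A^{[p]}) = \frac{1}{\rho(A(\G))}\lim_{k\to\infty}\Big\|\sum_{s\in\Arcs_k}(e_{s(k+1)}e_{s(1)}^\Tr)\kron A_s^{[p]}\Big\|^{1/k} = \frac{1}{\rho(A(\G))}\rho\Big(\sum_{(u,v,\sigma)\in\Arcs}(e_ve_u^\Tr)\kron A_\sigma^{[p]}\Big),
\]
since raising everything to the power $1/p$ then yields the two stated identities of the theorem. Because by hypothesis $\A^{[p]}$ leaves a proper cone $\Kset$ invariant, the observation preceding \lemref{constrainedinv} implies that the Kozyakin--Wang lift $\{\,(e_ve_u^\Tr)\kron A_\sigma^{[p]}\mid(u,v,\sigma)\in\Arcs\,\}$ leaves the product cone $\Kset^{|\Nodes|}$ invariant. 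We are therefore in position to apply \lemref{constrainedinv} directly to the constrained switched system $(\G,\A^{[p]})$, which produces exactly the displayed chain of equalities.

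Structurally, the argument is routine once \lemref{pp} and \lemref{constrainedinv} are at hand, and no genuine difficulty remains: the only point that warrants care is the bookkeeping of the exponents of $p$ and of $\rho(A(\G))$ through the identifications $\cpr^{p}=\rho_1(\G,\A^{[p]})$ and $\|A_s\|^p=\|A_s^{[p]}\|$. The substantive analytic work has already been carried out in establishing the two auxiliary lemmas, so this theorem is essentially their concatenation.
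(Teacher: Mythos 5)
Your proposal is correct and matches the route the paper intends: the theorem is stated without proof precisely because it is the concatenation of \lemref{pp} (reducing $\cpr$ to $\rho_1(\G,\A^{[p]})^{1/p}$) with \lemref{constrainedinv} applied to the lifted system, using the observation that the Kozyakin--Wang lift of $\A^{[p]}$ preserves the product cone $\Kset^{|\Nodes|}$. Your bookkeeping of the exponents (in particular reading the normalization in \lemref{constrainedinv} as $\rho(A(\G))^{-1}$ for the $1$-radius so that the $p$th root produces $\rho(A(\G))^{-1/p}$) is the correct one.
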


%The first identity is by choice of the basis, the second one is by
%definition and we will prove the third one in a latter section or in
%appendix (TODO).

Theorem~\ref{theo:norminout} shows that when there is an invariant proper cone,
$\cpr$ is as easy to obtain as computing a spectral radius.
%Indeed, as we will see in a latter
%section (TODO), $\lim_{k\to\infty} \left\|\sum_{s \in \Arcs_k}A_s^{[p]}\right\|^{\frac{1}{pk}}$
%is easy to compute.

It turns out that if $p$ is even then there exists an invariant proper cone.
\begin{lemma}
  Consider \defAG{}.
  For any positive integer $d$,
  $\A^{[2d]}$ leaves an invariant proper cone.
  Moreover this cones is the cone of SOS polynomials in the
  scaled monomial basis.
  \begin{proof}
    Consider an homogeneous SOS polynomial $p(x)$ of degree $2d$ and
    its coordinates $p$ in the scaled monomial basis. That is,
    $p(x) = \la p,x^{[2d]}\ra$. For any matrix $A$, we have
    \[ \la A^{[2d]}p, x^{[2d]} \ra = \la p, (A^{[2d]})^\Tr x^{[2d]} \ra =
    \la p, (A^\Tr x)^{[2d]} \ra = p(A^\Tr x). \]
    Therefore if $p$ is the coordinate vector of an SOS polynomial
    then $A^{[2d]}p$ is also the coordinate vector of an SOS
    polynomial.
  \end{proof}
\end{lemma}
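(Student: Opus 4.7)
The plan has two parts: (a) verify that the coordinate vectors of SOS polynomials of degree $2d$ in the scaled monomial basis form a proper cone in $\R^{N_d}$, and (b) verify that each matrix $A^{[2d]}$ maps this cone into itself. Part (a) is standard: closedness follows from \theoref{sos} expressing the SOS cone as the image of the PSD cone under a continuous linear map; convexity is immediate since SOS is closed under sums and nonnegative scalings; solidity holds because $\|x\|_2^{2d}$ lies in the interior (a sufficiently small perturbation of its PSD Gram representation remains PSD); and pointedness holds because if both $p$ and $-p$ are SOS then $p \geq 0 \geq p$ pointwise, forcing $p \equiv 0$.

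The heart of the argument is (b), and I would handle it via the adjoint identity
\[ (A^{[d]})^\Tr x^{[d]} = (A^\Tr x)^{[d]}. \]
Granted this, for any SOS polynomial $p(x) = \sum_i q_i(x)^2$ with coordinate vector $p$ in the scaled monomial basis (so $p(x) = \la p, x^{[2d]} \ra$), I would compute
\[ \la A^{[2d]} p, x^{[2d]} \ra = \la p, (A^{[2d]})^\Tr x^{[2d]} \ra = \la p, (A^\Tr x)^{[2d]} \ra = p(A^\Tr x) = \sum_i q_i(A^\Tr x)^2, \]
which is manifestly a sum of squares of degree-$d$ polynomials in $x$. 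Hence $A^{[2d]} p$ is itself the coordinate vector of an SOS polynomial, proving invariance.

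The hard part is justifying the adjoint identity, and this is precisely why the \emph{scaled} monomial basis is singled out: it is (up to sign choices) the essentially unique basis for which the pairing satisfies $\la y^{[d]}, x^{[d]} \ra = (y^\Tr x)^d$, so that the matrix transpose in this basis implements the functional-analytic adjoint on degree-$d$ forms. Once one has this pairing formula, the identity follows because
\[ \la (A^{[d]})^\Tr x^{[d]}, y^{[d]} \ra = \la x^{[d]}, A^{[d]} y^{[d]} \ra = \la x^{[d]}, (Ay)^{[d]} \ra = (x^\Tr A y)^d = ((A^\Tr x)^\Tr y)^d = \la (A^\Tr x)^{[d]}, y^{[d]} \ra \]
holds for every $y$. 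I would verify the pairing formula either by direct multinomial expansion (exploiting the stated property $\|x^{[d]}\|_2 = \|x\|_2^d$) or, more abstractly, by identifying the basis as orthonormal for the apolar (Bombieri) inner product on degree-$d$ forms, which makes $A \mapsto A^{[d]}$ a homomorphism compatible with transposition.
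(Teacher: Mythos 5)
Your proof is correct and follows essentially the same route as the paper's: the core step in both is the adjoint identity $(A^{[2d]})^\Tr x^{[2d]} = (A^\Tr x)^{[2d]}$ in the scaled monomial basis, which turns $\la A^{[2d]}p, x^{[2d]}\ra$ into $p(A^\Tr x)$ and hence preserves the SOS property. You additionally verify that the cone is proper and justify the adjoint identity via the pairing $\la y^{[d]},x^{[d]}\ra=(y^\Tr x)^d$, both of which the paper leaves implicit.
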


\end{document}